\documentclass{article}
\usepackage{graphicx} 

\usepackage{amsmath}
\usepackage{amsfonts}
\usepackage{amsthm}
\usepackage{amssymb}
\usepackage[all]{xy}
\newcommand{\XX}{\mathcal{X}}
\newcommand{\Rc}{\mathcal{R}}
\newcommand{\ZZ}{ \mathbb{Z}}
\newcommand{\CC}{\mathbb{C}}
\newcommand{\DD}{\mathbb{D}}
\newcommand{\Lc}{\mathcal{L}}
\newcommand{\CCa}{\mathcal{C}}
\newcommand{\Mod}{\textrm{Mod}}

\newtheorem{theorem}{Theorem}[section]
\newtheorem{proposition}[theorem]{Proposition}
\newtheorem{definition}[theorem]{Definition}
\newtheorem{quest}[theorem]{Question}
\newtheorem{lemma}{Lemma}[theorem]
\newtheorem{conjecture}[theorem]{Conjecture}
\title{Monodromy action on character varieties for Lefschetz pencils}
\author{Ishan Banerjee }

\begin{document}

\maketitle
\abstract{Given a Riemann surface $\Sigma$, let $\Gamma \subseteq \Mod(\Sigma)$ denote the monodromy subgroup of a family of complex curves homeomorphic to $\Sigma,$ arising from a sufficently ample Lefschetz pencil. We establish that the group $\Gamma$ acts with Zariski dense orbits or ergodically on certain character varieties for $\pi_1(\Sigma)$. This answers a version of a conjecture of Katzarkov, Pantev, and Simpson appearing in \cite{KPS}.}
\section{Introduction}

Let $\Sigma$ be a closed Riemann surface of genus $g$ and $G$ a reductive complex algebraic group. 
Let $*\in G.$ Let $\Rc_G(\Sigma, *)$ denote the $G-$representation variety of $\Sigma$,  $$\Rc_G(\Sigma, *  )  := \{ \rho: \pi_1(\Sigma, *) \to G | \rho \textrm{ is a homomorphism}\}.$$
We have a action of $G$ on $\Rc_G(\Sigma, *  )$ by conjugation. We define  $$\XX_G(\Sigma): = \Rc_G(\Sigma) / G,$$ where the quotient is taken in the GIT sense.

We will call $\XX_G(\Sigma)$  the $G-$character variety of $\Sigma$. The mapping class group of $\Sigma$, $\Mod(\Sigma)$ acts on $\XX_G(\Sigma).$ Thus any subgroup $\Gamma \subseteq \Mod(\Sigma)$ also acts on $\XX_G(\Sigma).$ We are interested in the following question.
\begin{quest}
    Given a subgroup $\Gamma \subseteq \Mod(\Sigma),$ does $\Gamma$ act with Zariski dense orbits on $\XX_G(\Sigma)$?
\end{quest}

What is known about this? Let us  mention that using the main Theorem of \cite{PX} (and a little additional work) we can conclude that for an arbitrary connected reductive group $G$, it can be shown that the \emph{entire mapping class group} $\Mod(\Sigma)$ acts with Zariski dense orbits on $\XX_G(\Sigma).$
We also have the following theorem of Katzarkov, Pantev and Simpson.
\begin{theorem}[Theorem A and B of \cite{KPS}]
    Let $n$ be a positive odd integer.
    Let $\Gamma \subseteq \Mod(\Sigma)$ be one of the following three groups:
    \begin{enumerate}
        \item $\Mod(\Sigma)$
        \item A hyperelliptic mapping class group.
        \item The monodromy group of a sufficiently ample Lefschetz pencil in a smooth complex projective surface $X$ , where $H^1(X,\ZZ) =0.$
    \end{enumerate}
    Then $\Gamma$ acts on $\XX_G(\Sigma)$ with dense orbits when $G=GL_{n}(\CC).$
\end{theorem}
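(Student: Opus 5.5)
The plan follows the strategy of Katzarkov--Pantev--Simpson. The first reduction is to note that Zariski density of orbits only needs to be checked on a Zariski open dense subset. Concretely, it suffices to find one point of $\XX_G(\Sigma)$, $G=GL_n(\CC)$, whose $\Gamma$-orbit is Zariski dense. Once we have one such point $\rho_0$, a very general point inherits the property: the locus of points with Zariski dense orbit is an intersection of countably many Zariski open sets, because each proper $\Gamma$-invariant closed subvariety is defined over a countable family. We then interpret ``dense orbits'' in the very general sense, as in \cite{KPS}.

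To produce $\rho_0$ we work near a finite-orbit or special point and linearize. We take a representation $\rho_0$ that is smooth and irreducible with finite image, or one factoring through a finite quotient, so that a finite-index subgroup $\Gamma'\subseteq\Gamma$ fixes $[\rho_0]$. Then $\Gamma'$ acts linearly on the tangent space
\[
T_{[\rho_0]}\XX_G(\Sigma)=H^1(\Sigma,\mathrm{ad}\,\rho_0).
\]
The Zariski closure $Z$ of the orbit of a nearby point $[\rho]$ is $\Gamma'$-invariant. Its tangent cone at $[\rho_0]$ (after degenerating $\rho\to\rho_0$ along a curve) is a $\Gamma'$-invariant subvariety of $H^1(\Sigma,\mathrm{ad}\,\rho_0)$. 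The key step is therefore to show that the Zariski closure of the image of $\Gamma'$ in $GL(H^1(\Sigma,\mathrm{ad}\,\rho_0))$ is a large group, for instance $Sp$ of the symplectic form on each isotypic piece, acting irreducibly enough that the only invariant cones are $0$ and the whole space. A dimension and irreducibility argument then forces $Z=\XX_G(\Sigma)$.

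For the three cases:
\begin{enumerate}
\item For $\Mod(\Sigma)$, take $\rho_0$ trivial or abelian and use the surjection $\Mod(\Sigma)\to Sp_{2g}(\ZZ)$.
\item For the hyperelliptic group, lift to the level of the branched double cover. The cohomology $H^1(\Sigma,\mathrm{ad}\,\rho_0)$ decomposes into Prym-type pieces, and the monodromy on these is computed by A'Campo-type results. The hypothesis that $n$ is odd enters here: it guarantees that the relevant local systems have no $\pm$ symmetry obstructing irreducibility, and that $-1$ acts suitably on the $SL_n$ part.
\item For Lefschetz pencils, take $\rho_0$ pulled back from a finite étale cover related to a finite-image representation of $\pi_1$ of the fiber. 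The monodromy on $H^1(\Sigma,\mathrm{ad}\,\rho_0)$ is then the monodromy of a local system on the pencil. Using $H^1(X,\ZZ)=0$, sufficient ampleness, Picard--Lefschetz theory and Deligne's theorem, it is generated by transvections in the vanishing cycles and its Zariski closure is the full symplectic group of the vanishing part.
\end{enumerate}

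The main obstacle is this last monodromy computation. We must show that the vanishing cycles span $H^1(\Sigma,\mathrm{ad}\,\rho_0)$ and that the vanishing part is not a proper invariant piece; the condition $H^1(X)=0$ is exactly what kills the invariant part. The first approach we would try is Deligne's irreducibility argument for the twisted local system on a suitable cover of the pencil.
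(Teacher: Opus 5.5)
The paper does not prove this theorem. It quotes it from \cite{KPS} and only summarizes their method: take a finite-image $\rho$ fixed by a finite-index subgroup of $\Gamma$, and prove the linearized action on $T_\rho\XX_G(\Sigma)$ has dense orbits. It adds that this hinges on finding a finite subgroup of $G$ with special properties. Your outline has that architecture, but the step that carries the weight, the choice of $\rho_0$, is missing, and the choices you do name fail.

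In case 1 you take $\rho_0$ trivial or abelian. Such points are singular, since their stabilizer is larger than the center. At the trivial representation $\Mod(\Sigma)$ acts on $H^1(\Sigma,\CC)\otimes\mathfrak{gl}_n$ only through $Sp_{2g}$ on the first factor. Write $v=\sum_i(a_i\otimes A_i+b_i\otimes B_i)$ in a symplectic basis. The map $v\mapsto\Omega(v)=\sum_i A_i\wedge B_i\in\Lambda^2\mathfrak{gl}_n$ is $Sp_{2g}$-invariant and $GL_n$-equivariant. Let $S_\Omega$ be the skew endomorphism attached to $\Omega$ by the trace form. Then $\mathrm{tr}(S_\Omega^2)$ is a nonconstant invariant on the quadratic cone for $n>1$: at $A_1=tE_{11}$, $B_1=E_{22}$, all others $0$, it equals $-2t^2$. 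So the linearization has no dense orbit there.

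The same mechanism constrains every finite-image $\rho_0$ with image $F$. On a finite-index subgroup, the monodromy acts on $H^1(\Sigma,\mathrm{ad}\,\rho_0)=\bigoplus_\sigma H^1(\Sigma,\mathcal L_\sigma)\otimes\mathrm{Hom}_F(\sigma,\mathfrak{gl}_n)$ only through the first tensor factors. A self-dual constituent $\sigma$ of multiplicity $\ge 2$ then gives invariants $\beta(w_1,w_2)$ from the cup-product pairing. A constituent with $\sigma\not\cong\sigma^*$ gives the invariant quadratic function $v\mapsto\omega(v_\sigma,v_{\sigma^*})$ from the Goldman form. So you need an irreducible $F\subset GL_n$ for which $\mathfrak{gl}_n|_F$ is multiplicity-free with self-dual constituents. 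Then $T_{[\rho_0]}\XX=\bigoplus_\sigma H^1(\Sigma,\mathcal L_\sigma)$ with each summand symplectic. Producing such an $F$ is the kind of special finite subgroup the paper credits \cite{KPS} with needing. The parity of $n$ is tied to this choice, so it matters in all three cases, not only the hyperelliptic one where you invoke it.

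Two further steps also need repair. First, your target that the only invariant cones in $T$ be $0$ and $T$ is false in exactly the situation that arises: $\prod_\sigma Sp(H^1(\Sigma,\mathcal L_\sigma))$ preserves every partial sum of the decomposition. What you need is only a dense orbit for the linear action. That means showing the Zariski closure of the monodromy on $T$ is essentially the full product. This requires bigness on each twisted piece \emph{and} independence across pieces, via a Goursat-type argument excluding isomorphisms between the monodromy representations on different $H^1(\Sigma,\mathcal L_\sigma)$. Your use of $H^1(X,\ZZ)=0$ only handles the untwisted piece $H^1(\Sigma,\CC)$ coming from the center of $\mathfrak{gl}_n$. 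The twisted pieces are where the case-by-case monodromy input lives, and your sketch does not supply it.

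Second, the passage from $T$ back to $\XX_G(\Sigma)$ does not work as written, because the orbit closure of a nearby point need not contain $[\rho_0]$. Argue with invariant rational functions instead. Suppose $f$ is a nonconstant $\Gamma'$-invariant rational function on $\XX_G(\Sigma)$. After subtracting a constant if necessary, take its initial form at the smooth fixed point $[\rho_0]$; this is well defined because $\mathrm{gr}\,\widehat{\mathcal O}_{[\rho_0]}\cong\mathrm{Sym}\,T^*$ is a domain. That initial form is a nonconstant $\Gamma'$-invariant rational function on $T$. This contradicts the dense orbit on $T$, by the criterion of \cite{KPS} quoted in the paper.
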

They also made the following conjecture.
\begin{conjecture}\label{conj}
Let $G$ be a reductive algebraic group.
Let $\Gamma \subseteq \Mod(\Sigma)$ be either: 
    \begin{enumerate}
    \item the monodromy group of an arbitrary non- isotrivial family of algebraic curves.
    \item The monodromy group of a sufficiently ample Lefschetz pencil of curves in a smooth complex projective surface $X.$
    \end{enumerate}
    Then $\Gamma$ acts on $\XX_G(\Sigma)$ with Zariski dense orbits.
\end{conjecture}
More details will be given on the second kind of subgroup in Section 5 of this paper. We also note that the authors of \cite{KPS} conjecture slightly more, they not only conjecture that the action has Zariski dense orbits but also that for some $\rho \in \XX_g(\Sigma)$ with finite image, the action of (a finite index subgroup) of $\Gamma$ on $T_{\rho}\XX_G(\Sigma)$ has Zariski dense image. We do not study this further question in this paper.

While part (i) of the conjecture is still wide open,we prove a corrected version of part(ii) of the conjecture.
\begin{theorem}\label{main}
    Let $X$ be a smooth projective surface, $D$ a very ample divisor on $X$. Let $k \ge 7$. Let $\Gamma$ be the monodromy group associated to the complete linear system $|kD|,$ equivalently it is the monodromy group associated to a Lefschetz pencil in this complete linear system.
    Let $G$ be a semisimple algebraic group.  Then $\Gamma$ acts with Zariski dense orbits on $\XX_G(\Sigma).$
\end{theorem}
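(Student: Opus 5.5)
The approach is to reduce the statement to the known result that the full mapping class group acts with Zariski dense orbits (via \cite{PX}), by showing that $\Gamma$ is "large enough" in a precise algebraic sense. The key input about Lefschetz pencils in $|kD|$ with $k\ge 7$ is that the vanishing cycles are plentiful. For $k$ large relative to $D$, the vanishing cycles should include a collection of nonseparating simple closed curves $c_1,\dots,c_m$ with two properties. First, consecutive curves intersect once, in the configuration of a chain filling a subsurface. Second, the mapping classes of $\Sigma$ obtained by cutting along such chains generate, up to finite index, the stabilizer of a suitable subsurface decomposition. Concretely, I would first show that $\Gamma$ contains the Dehn twists $T_c$ about all vanishing cycles. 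Picard--Lefschetz guarantees this. Since all vanishing cycles are conjugate under $\Gamma$, any simple closed curve in the $\Gamma$-orbit of one vanishing cycle gives a twist in $\Gamma$. Using degeneration of $kD$ curves (writing a curve in $|kD|$ as degenerating to a union of curves in $|D|$, $|(k-1)D|$, etc.), I would then exhibit a subsurface $S\subset\Sigma$ of genus at least $1$ with one or two boundary components, such that $\Gamma$ contains the whole of $\Mod(S)$ (or a finite index subgroup of it), together with enough twists along curves crossing $\partial S$.

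Second, I would use a Goldman-type twist-flow argument. For a nonseparating curve $c$ and a representation $\rho$ with $\rho(c)$ semisimple of infinite order, the powers $T_c^n$ act on $\XX_G(\Sigma)$. Their Zariski closure on $\rho$ contains the orbit of the one-parameter group obtained from the Zariski closure of $\langle\rho(c)\rangle$ in the centralizer, acting on the half of the surface beyond $c$. Thus the Zariski closure $Z$ of a $\Gamma$-orbit through a generic $\rho$ is invariant under these "bending" flows for all vanishing cycles $c$. I would then show that the Lie algebra generated by the corresponding Hamiltonian vector fields (the Goldman functions $f\circ\rho(c)$, $f$ a class function) spans the tangent space at generic points. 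Goldman's bracket formula expresses the brackets in terms of curves obtained by smoothing intersections. The key combinatorial step is that iterated smoothings of vanishing cycles produce curves whose trace functions generate the coordinate ring of $\XX_G(\Sigma)$ up to finite extension. Once $Z$ has full dimension at a generic point and $\XX_G(\Sigma)$ is irreducible (for $G$ semisimple, after passing to the connected component relevant for the given topological type), $Z=\XX_G(\Sigma)$. For $G$ semisimple one must handle the components indexed by $\pi_1(G)$. Monodromy acts trivially on these, so density would be proved componentwise.

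The main obstacle will be the topological step: proving that for $k\ge 7$ the set of vanishing cycles is rich enough. Its intersection graph must be connected, and the curves must fill $\Sigma$ in a way that makes their Goldman functions generate the full Poisson algebra. I would first try to import the structure theorems on Lefschetz monodromy groups, namely that for sufficiently ample pencils the monodromy contains twists on a connected chain of curves of length $2g$ (as in work of Salter on $r$-spin mapping class groups). Here $\Gamma$ is a framed/$r$-spin mapping class group, which contains enough twists on an arboreal filling configuration. The condition $k\ge 7$ would be exactly what makes those results apply. Given such a chain, Goldman's ergodicity/density proof for $\Mod(\Sigma)$ only uses twists along a filling arboreal collection, so it goes through verbatim for $\Gamma$.
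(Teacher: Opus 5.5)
\textbf{There is a genuine gap, and it is in the step that carries all the weight.} You plan to import a structure theorem saying that for $k\ge 7$ the monodromy $\Gamma$ is a framed/$r$-spin mapping class group containing twists on a $2g$-chain (or some filling arboreal configuration) of vanishing cycles. That theorem is only available when $\pi_1(X)=1$, and it is false otherwise.

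\begin{itemize}
\item Every vanishing cycle bounds a Lefschetz thimble, so it is null-homotopic in $X$.
\item If vanishing cycles formed a $2g$-chain, or any filling configuration with tree-like intersection pattern, then $\pi_1$ of a regular neighbourhood would be generated by conjugates of these curves and would surject onto $\pi_1(\Sigma)$.
\item Since $\pi_1(\Sigma)\to\pi_1(X)$ is onto by Lefschetz, this forces $\pi_1(X)=1$.
\item Likewise, if $b_1(X)>0$ then $\Gamma$ fixes the nonzero image of $H^1(X)$ in $H^1(\Sigma)$. This is what Proposition~\ref{notreductive} exploits. So $\Gamma$ has infinite index and cannot be a framed mapping class group.
\end{itemize}

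In the simply connected case your reduction is just the remark in the introduction (\cite{BS} plus \cite{PX}). The content of the theorem is the case where $\pi_1(X)$ is large, and there your second step has nothing to run on. Goldman's and Pickrell--Xia's arguments use twists about curves that are essential in $X$. Every curve you can reach from vanishing cycles by iterated Goldman smoothings stays in $\ker(\pi_1(\Sigma)\to\pi_1(X))$. So proving that the resulting Poisson algebra of class functions still has full rank is the whole problem, not a verbatim transfer.

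\textbf{The idea you are missing} is to stop looking for enough twists inside $\Gamma$ and instead pass to the closure $\overline{\Gamma}$. This is the group of mapping classes fixing every $\Gamma$-invariant rational function. It has the same invariant functions as $\Gamma$, but it can be all of $\Mod(E)$ even though $\Gamma$ has infinite index. The paper proceeds in three steps:
\begin{enumerate}
\item Framed mapping class groups of a genus $\ge 4$ subsurface act with dense orbits on \emph{relative} representation varieties (Lemma~\ref{framedense}). This is proved by the gluing-induction lemmas, starting from the punctured-torus base case supplied by \cite{PX}.
\item By the fibration argument of Lemma~\ref{changeofclosurealg}, $\overline{\Gamma}$ therefore contains the full $\Mod(\Sigma_0)$ of a core subsurface. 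This includes twists on curves that are not null-homotopic in $X$, which are exactly the twists unavailable in $\Gamma$.
\item The tacnodal-degeneration and simple-braid-group machinery upgrades this to $\overline{\Gamma}=\Mod(E)$ (Lemma~\ref{fullclosure}). The key inputs are that $\Gamma\cap M$ surjects onto $SBr(\tilde C)$, and then $PSBr(\tilde C)\subseteq\overline{\Gamma}$. Density then follows from density for $\Mod(E)$.
\end{enumerate}

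Your observation that the Zariski closure of $T_c^{\ZZ}\rho$ contains the orbit of the algebraic group generated by $\rho(c)$ is correct. However, without a substitute for steps 2 and 3, the proposal proves the theorem only when $\pi_1(X)=1$.
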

The assumption that $G$ is semisimple is necessary, both Conjectures are false for reductive groups with $G^{ab} \neq 0$. See subsection \ref{ssnec} for further details.

Our approach is completely different from that of \cite{KPS}. They approach the problem of proving Zariski density by finding a specific representation $\rho$ that is fixed by a finite index subgroup of the mapping class group. They then prove that for this $\rho$, the induced action of (a finite index subgroup of) $\Mod(\Sigma)$ on $T_{\rho} \XX_G(\Sigma)$ has Zariski dense orbits. This is enough to establish the result. This approach while quite interesting, is very dependent on finding finite subgroups with specified properties inside the ambient group $G$, which is why the argument only works for $GL_{2n+1}(\CC).$ Our methods on the other hand are more reminiscent of the sewing methods involved in \cite{PX} (along with methods developed in \cite{BS}).

Let us emphasize that the most interesting situation when Theorem \ref{main} holds is when $\pi_1 X$ is a large complicated group. Indeed, if $\pi_1 X =0 $, then the results of \cite{BS} imply that $\Gamma$ is of finite index in the mapping class group. The results of Pickrell-Xia in \cite{PX} (and a little more work to pass from $G$ to a maximal compact subgroup inside $G$) would then imply Theorem \ref{main} in that case.

However if $\pi_1 X$ is a large group, then $\Gamma$ is very much of infinite index in  $\Mod(\Sigma)$, and these groups can have interesting properties. This was explored by the author in \cite{Bpi}, there one can see interesting relations between the groups $H^1(X, \ZZ)$ and $
\Gamma.$

Finally we'd like to note that there are indeed somewhat large subgroups of $\Mod(\Sigma_g)$ that act with non Zariski dense orbits: Saadi \cite{Saadi} produces a filling family of multicurves, whose associated Dehn twists  preserve a rational function.
\subsection{Ergodic version}
Given a compact connected Lie group $K$ one can define the representation variety $\Rc_K(\Sigma, *)$ exactly as above. We can then define $\XX_K(\Sigma): = \Rc_K(\Sigma) / K$ where the quotient is the usual topological quotient. We have:
\begin{theorem}\label{mainerg}
        Let $X$ be a smooth projective surface, $D$ a very ample divisor on $X$. Let $k \ge 7$. Let $\Gamma$ be the monodromy group associated to the complete linear system $|kD|,$ equivalently it is the monodromy group associated to a Lefschetz pencil in this complete linear system.
    Let $K$ be a semisimple compact group.  Then $\Gamma$ acts ergodically on $\XX_K(\Sigma),$ with respect to the standard symplectic measure (arising from the Goldman symplectic form).
\end{theorem}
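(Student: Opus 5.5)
The plan is to reduce ergodicity for $\Gamma$ to known ergodicity results for mapping class groups of subsurfaces, following the sewing strategy of Pickrell--Xia \cite{PX}, and to use the structure of $\Gamma$ coming from \cite{BS}. The key input about $\Gamma$ that I would extract from the ampleness hypothesis ($k\ge 7$) is this: $\Gamma$ contains the Dehn twists about all vanishing cycles, and there is a decomposition of $\Sigma$ into pieces on which $\Gamma$ is large. Concretely, I would show that $\Sigma$ contains an embedded subsurface $S \subset \Sigma$ of genus $\geq 2$ with one boundary component, together with a collection of further subsurfaces $S_1,\dots,S_m$, such that:
\begin{enumerate}
\item each $S_i$ is a copy of $S$ or a pair of pants;
\item the interiors of the $S_i$ together with $S$ cover $\Sigma$, and consecutive pieces overlap so that their union is connected;
\item $\Gamma$ contains a finite index subgroup of $\Mod(S_i, \partial S_i)$ for each $i$.
\end{enumerate}
The third condition is where $\pi_1 X$ enters: vanishing cycles supported in a small disc-like region of $X$ (a neighbourhood of a point where the pencil looks like plane curves of degree $k$) give a sub-pencil whose monodromy is of finite index in $\Mod(S)$, by the simply connected case in \cite{BS}. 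Translating this local region around $X$ via the transitivity of $\Gamma$ on vanishing cycles produces the cover.

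Next comes the measure-theoretic step. Fix a $\Gamma$-invariant $L^2$ function $f$ on $\XX_K(\Sigma)$. Restricting a representation to $\pi_1(S_i)$, I would use the Pickrell--Xia theorem for surfaces with boundary: $\Mod(S_i,\partial S_i)$, and any finite-index subgroup of it, acts ergodically on the relative character varieties $\XX_K(S_i)_C$ with boundary holonomy in a fixed conjugacy class $C$, for almost every $C$. Disintegrating the Goldman measure along the map $\XX_K(\Sigma)\to \XX_K(S_i)\times_{\text{boundary}} \XX_K(\Sigma\setminus S_i)$ then shows that $f$ depends only on the restriction to the complement $\Sigma\setminus S_i$ together with the boundary data. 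Pants pieces contribute invariance under the twist flows along boundary curves, and these are ergodic on fibres for generic holonomy since $K$ is semisimple.

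Finally, I would run the standard sewing argument. Because the pieces overlap and cover $\Sigma$, $f$ is a function of data on the complement of each piece, so iterating over the chain shows that $f$ is a.e.\ constant. To make this rigorous, I would phrase it via the Goldman (Hamiltonian) flows: the twist flows along curves in $S_i$, together with the invariance, generate the tangent space almost everywhere. The step I expect to be the main obstacle is the geometric one, namely producing subsurfaces $S_i$ with finite-index mapping class groups inside $\Gamma$ that cover $\Sigma$ when $\pi_1 X$ is large. I would first try using Lefschetz degenerations that split $kD$ into $(k-j)D + jD$, which yields a copy of a degree-$jD$ curve's monodromy. The semisimplicity of $K$ is needed at the end, because otherwise the abelianization $H^1(\Sigma)\to K^{ab}$ gives an invariant function.
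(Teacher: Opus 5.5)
Your condition (3) is the gap, and the sewing step fails with it. Nothing in \cite{BS} or \cite{PCS} gives $\Gamma$ a finite-index subgroup of $\Mod(S_i,\partial S_i)$ for a local piece. What degenerations actually produce is a \emph{framed} mapping class group $\Mod(\Sigma_0)[\phi]$ of one core subsurface, and this has infinite index in $\Mod(\Sigma_0,\partial\Sigma_0)$: for a nonseparating $a$ with $\phi(a)\neq 0$, no power of $T_a$ preserves $\phi$. So Pickrell--Xia does not apply to it directly. There is no ``sub-pencil'' to which the simply connected case of \cite{BS} applies, and splitting $kD$ as $(k-j)D+jD$ just reproduces the same non-simply-connected problem.

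More fundamentally, the cover you want cannot do the job when $b_1(X)>0$, which is the case the theorem is about (if $\pi_1X=0$, $\Gamma$ already has finite index and your outline is essentially \cite{PX}). Every $\gamma\in\Gamma$ satisfies $i\circ\gamma\simeq i$ for the inclusion $i\colon\Sigma\to X$. So if $\Gamma$ contains a finite-index subgroup of $\Mod(S_i,\partial S_i)$, then a power of $T_c$ lies in $\Gamma$ for every nonseparating $c\subseteq S_i$, and this forces $H_1(S_i;\mathbb{Q})\to H_1(X;\mathbb{Q})$ to be zero. Hence even the group generated by all the full groups $\Mod(S_i,\partial S_i)$ preserves $\ker\bigl(H_1(\Sigma;\mathbb{Q})\to H_1(X;\mathbb{Q})\bigr)$, acts trivially on the quotient, and has infinite index in $\Mod(\Sigma)$. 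The homology of $\Sigma$ surjecting onto $H_1(X)$ is carried only by cycles in the nerve of your cover. Your iteration ``$f$ depends only on data on the complement of each piece, hence is constant'' has no mechanism there: knowing that $f$ factors through several restriction maps gives nothing without an independence statement across the overlaps, and that is precisely the hard step. Two further problems: pants pieces do not act ergodically on their own relative character varieties once $\operatorname{rank}K\ge 2$. And the Goldman-flow rephrasing would require showing that the bending fields of curves having a twist power in $\Gamma$ (all null in $H_1(X;\mathbb{Q})$) span the tangent space almost everywhere, which you have not addressed.

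The missing idea is to enlarge not $\Gamma$ but its closure $\overline{\Gamma}^K=\{g\in\Mod(E) : L^2(\XX_K(E))^{\Gamma}\subseteq L^2(\XX_K(E))^{g}\}$. This turns out to be all of $\Mod(E)$, including mapping classes that act nontrivially on $H_1(X)$. The steps are:

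\begin{enumerate}
\item Ergodicity of the framed group on relative character varieties (Lemma \ref{framedenseerg}, proved by induction from the one-holed torus rather than quoted from \cite{PX}) puts all of $\Mod(\Sigma_0)$ into the closure (Lemma \ref{changeofclosureerg}).
\item Lifting the simple braid group into $\Gamma$ (Lemma \ref{imageSBr}) yields, inside the closure, twists about curves $\beta$ in $\pm1$ position with respect to a new handle (Lemmas \ref{containsker} and \ref{extarc}).
\item Such a twist acts on an arc crossing the handle by $a\mapsto[z_1,z_2]a$ (Lemma \ref{pmtwist}). Since $K=[K,K]$, this randomizes the holonomy along the new arc even though $T_\beta$ only ever multiplies by a commutator, which homology cannot see (Lemma \ref{indpm}). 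So the closure absorbs $\Mod$ of the enlarged subsurface.
\item Bootstrapping through Lemma \ref{contpsbr} gives all twists about tacnodal-type curves, hence $\overline{\Gamma}^K=\Mod(E)$ (Lemma \ref{fullclosure}), and Lemma \ref{wholemcg} finishes.
\end{enumerate}

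Semisimplicity is thus used inside the proof at the commutator step, not only to exclude the abelian counterexample.
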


A major advantage of our approach over that of \cite{KPS} is that the machinery is flexible enough to prove results in both the ergodic and algebraic setting. In fact our proofs will be almost identical in both settings.
\subsection{Necessity of semisimplicity}\label{ssnec}
In this subsection we will establish that Conjecture \ref{conj} is false for any reductive group that is not semi-simple.

Let us recall an important Theorem of \cite{KPS} which will be used over and over in this paper.
\begin{theorem}[Theorem 2.2 of \cite{KPS}]
    Let $\Gamma$ be a discrete group acting on an affine variety $X$. Then 
    $\CC(X)^{\Gamma} = \CC$ if and only if $\Gamma$ has a dense orbit.
\end{theorem}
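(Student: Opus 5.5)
The direction ``dense orbit $\Rightarrow$ $\CC(X)^{\Gamma}=\CC$'' is a direct density argument, and for the converse I plan a Rosenlicht-type argument: package the ideals of orbit closures into a rational map to Grassmannians whose coordinates are $\Gamma$-invariant rational functions. Throughout I take $X$ irreducible (otherwise $\CC(X)$ is not a field), and I use that $\CC$ is uncountable.

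\emph{Dense orbit $\Rightarrow$ no invariants.} Suppose $\Gamma x_0$ is Zariski dense and $f\in \CC(X)^{\Gamma}$. Let $U$ be the open domain of definition of $f$; it is $\Gamma$-stable because $f$ is invariant. The orbit $\Gamma x_0$ meets $U$, since it is dense and $U$ is a nonempty open subset; after replacing $x_0$ by a translate we may take $x_0\in U$. Then $\Gamma x_0 \subseteq U$ and $f$ equals the constant $c=f(x_0)$ on it. As $\Gamma x_0$ is dense in $U$, the regular function $f-c$ on $U$ vanishes identically, so $f=c$.

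\emph{No invariants $\Rightarrow$ dense orbit.} Embed $X\subseteq \mathbb{A}^N$ and let $V_d$ be the finite-dimensional space of polynomials of degree $\le d$ restricted to $X$. For $x\in X$ set
$$I_d(x)=\{f\in V_d : f(\gamma x)=0 \ \forall \gamma\in\Gamma\},$$
the degree-$\le d$ part of the ideal of $\overline{\Gamma x}$. For fixed $f$ the conditions $f(\gamma x)=0$ are closed in $x$, so $x\mapsto \dim I_d(x)$ is upper semicontinuous. Hence on a nonempty open set $U_d$ the dimension takes its minimal value $m_d$, and $x\mapsto I_d(x)$ defines a morphism $U_d\to \mathrm{Gr}(m_d,V_d)$. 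Concretely, $I_d(x)$ is the kernel of the evaluation map $V_d\to \CC^{\Gamma}$ at the points $\gamma x$. Finitely many $\gamma$ already realise the minimal rank on an open set, so the kernel is given by minors and the map is rational, with Plücker coordinates in $\CC(X)$.

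Since $\Gamma(\gamma x)=\Gamma x$, we have $I_d(\gamma x)=I_d(x)$. Thus the Plücker coordinates are $\Gamma$-invariant rational functions, and by hypothesis they are constant. Hence $I_d(x)=I_d$ is independent of $x$ on $U_d$. But any $f\in I_d$ vanishes at every $x\in U_d$, since $x\in \Gamma x$, so $f=0$ on the dense set $U_d$ and therefore $I_d=0$, i.e. $m_d=0$. Each $X\setminus U_d$ is a proper closed subset, and a countable union of proper closed subvarieties of an irreducible complex variety cannot cover it. So some $x\in\bigcap_d U_d$ exists, and for it no nonzero polynomial vanishes on $\Gamma x$; that is, $\Gamma x$ is Zariski dense.

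The step I expect to be most delicate is the rationality of $x\mapsto I_d(x)$, since $\Gamma$ is infinite. I would reduce to a finite subset $S\subseteq\Gamma$ attaining the generic rank, and then check that on the open set where $\dim I_d(x)=m_d$ the $S$-kernel and the $\Gamma$-kernel coincide. Note that $\Gamma$ need not preserve degrees; the argument avoids this by never letting $\Gamma$ act on $V_d$, only evaluating $f\circ\gamma$ pointwise.
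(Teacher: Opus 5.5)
The paper gives no proof of this statement. It is quoted from \cite{KPS} and used throughout as a black box, to trade ``dense orbit'' for ``no nonconstant invariant rational functions,'' so there is no internal argument to compare yours against. Your proof is correct and self-contained.

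\emph{Easy direction.} This is fine as written. The $\Gamma$-stability of the domain of definition of $f$ is exactly what justifies evaluating $f$ along the whole orbit.

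\emph{Converse.} The Grassmannian construction works. Let $r_d$ be the generic, i.e.\ maximal, rank of the evaluation map $V_d\to\CC^{\Gamma}$. A finite $S\subseteq\Gamma$ that attains $r_d$ at one point attains it on a nonempty open set $W_S$. On $W_S$ the $S$-kernel and the $\Gamma$-kernel coincide, since the former contains the latter and both have dimension $m_d$. So $x\mapsto I_d(x)$ is a morphism to $\mathrm{Gr}(m_d,V_d)$ there, and ratios of Pl\"ucker coordinates give $\Gamma$-invariant elements of $\CC(X)$.

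A few small fixes:
\begin{itemize}
\item In the rationality step you wrote ``minimal rank''; you mean the maximal (generic) rank.
\item Upper semicontinuity of $\dim I_d(x)$ is cleanest if you write $\{x:\dim I_d(x)\ge k\}$ as the intersection, over all finite $S\subseteq\Gamma$, of the determinantal loci where the rank of evaluation at $\{\gamma x\}_{\gamma\in S}$ is at most $\dim V_d-k$. Closedness of each single condition $f(\gamma x)=0$ is not by itself the right justification.
\item Constancy of the Pl\"ucker ratios first gives $I_d(x)=I_d$ only on a dense open subset of $U_d$. That is all you need for the conclusion $I_d=0$.
\end{itemize}

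What your route buys: it uses only that $\CC$ is uncountable, never that $\Gamma$ is countable. It also proves a stronger statement: the orbit of a very general point, meaning any point outside the countably many proper closed sets $X\setminus U_d$, is Zariski dense.
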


In light of this theorem, we can study invariant functions instead of trying to directly establish that a certain orbit is dense.
We recall also the well known fact that a group action on a probability space $(X, \mu)$ is ergodic if and only if there are no nontrivial invariant $L^2$ functions. Thus the properties of an action being ergodic and being Zariski dense are structurally very similar, this will be used repeatedly.

\begin{proposition}\label{notreductive}
Let $\Sigma$ be a closed Riemann surface of genus $\ge 1$. Let $G$ be a reductive group with abelianisation $A$, with $\dim A >0$. Let $\Gamma \subseteq \Mod(\Sigma)$. Let $V \subseteq H_1(X,\ZZ)$ denote a proper symplectic subspace. Assume that $\Gamma$ maps to $Sp(V) \subseteq Sp(H_1(\Sigma))$ under the Torelli map. Then $\CC(\XX_G(\Sigma))^{\Gamma} \neq \CC.$ As a consequence, there are no Zariski dense orbits.
\end{proposition}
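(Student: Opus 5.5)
The plan is to push the problem down to the abelianisation and produce an explicit non-constant invariant rational function there. Let $\pi: G \to A = G/[G,G]$ be the abelianisation map; since $G$ is reductive, $A$ is a torus $(\CC^*)^r$ with $r = \dim A > 0$. Composition with $\pi$ gives a morphism
$$\XX_G(\Sigma) \to \XX_A(\Sigma) = \mathrm{Hom}(H_1(\Sigma,\ZZ), A) \cong A^{2g}.$$
It is $\Mod(\Sigma)$-equivariant, and on the target the action factors through the Torelli map to $Sp(H_1(\Sigma,\ZZ))$, acting by precomposition. The map is also surjective (or at least dominant). Indeed, the identity component $Z^0$ of the centre of $G$ surjects onto $A$ with finite kernel, and abelian representations valued in $Z^0$ lift any homomorphism $H_1 \to A$ after passing to a finite cover of tori. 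Alternatively, since a torus is divisible, any homomorphism from the free abelian group $H_1$ to $A$ lifts to $Z^0$. Hence pulling back a non-constant rational function on $\XX_A(\Sigma)$ gives a non-constant rational function on $\XX_G(\Sigma)$.

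Next I construct the invariant function. Since $V$ is a proper symplectic subspace (the statement's $H_1(X,\ZZ)$ should read $H_1(\Sigma,\ZZ)$), its symplectic complement $V^{\perp}$ is a nonzero symplectic subspace and $H_1 = V \oplus V^{\perp}$ rationally. The hypothesis that $\Gamma$ maps into $Sp(V)$ should be read as: $\Gamma$ preserves $V$ and acts trivially on $V^\perp$. Pick a nonzero primitive integral class $c \in V^{\perp}\cap H_1(\Sigma,\ZZ)$ and a nontrivial character $\chi: A \to \CC^*$. Define
$$f(\rho) = \chi(\pi(\rho)(c)).$$
This is a regular function on $\XX_G(\Sigma)$, since it depends only on the conjugacy class through $\pi$. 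For $\gamma \in \Gamma$ we have $f(\rho\circ\gamma^{-1}) = \chi(\pi\rho(\gamma^{-1}_* c)) = f(\rho)$ because $\gamma_*$ fixes $c$. So $f$ is $\Gamma$-invariant.

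To see $f$ is non-constant, choose a homomorphism $H_1 \to A$ sending $c$ (primitive, so a basis element) to an element $a$ with $\chi(a)$ arbitrary in $\CC^*$. Lift it to $G$ as above; then $f$ takes every value in $\CC^*$. Hence $\CC(\XX_G(\Sigma))^{\Gamma} \neq \CC$, and by Theorem 2.2 of \cite{KPS} quoted above, $\Gamma$ has no dense orbit. (Each irreducible component can be handled in the same way if $\XX_G$ is reducible: restrict to the component containing the lifted abelian representations.)

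The main obstacle is interpretive and technical rather than conceptual. One part is fixing the precise meaning of "$\Gamma$ maps to $Sp(V)$" so that $V^\perp$ is pointwise fixed. The other is the lifting step from $A$-representations to $G$-representations, which needs the surjectivity $Z^0 \to A$ and divisibility of tori. I would handle the lifting via the isogeny $Z^0 \times [G,G] \to G$.
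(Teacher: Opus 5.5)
Your proposal is correct and follows essentially the same route as the paper: map along the $\Gamma$-equivariant morphism $\XX_G(\Sigma)\to\XX_A(\Sigma)$, and use the fact that $\Gamma$ acts trivially on the $V^{\perp}$ part of homology to produce a non-constant invariant function. Where you differ, it is only in details the paper leaves implicit: you evaluate at a fixed primitive class $c\in V^{\perp}$ instead of projecting $H_1(\Sigma,\ZZ)\otimes A\to V^{\perp}\otimes A$, and you lift abelian representations through the central torus to show the pulled-back function is actually non-constant.
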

\begin{proof}
    We have a $\Gamma$ equivariant map $p :\XX_G(\Sigma) \to \XX_A(\Sigma)= H_1(\Sigma, \ZZ) \otimes_{\ZZ} A.$ By our assumption we have a $\Gamma$ invariant projection $p_0:H_1(\Sigma, \ZZ)\otimes_{\ZZ} A \to V^{\perp} \otimes_{\ZZ} A.$ Let  $f \in \CC(V^{\perp} \otimes_{\ZZ} A)$ be a nonconstant function. Then $f \circ p_0 \circ p$  is a nonconstant element in $\CC(\XX_G(\Sigma))^{\Gamma}.$
\end{proof}
We note that there are many examples of monodromy groups satisfying the assumptions of Proposition \ref{notreductive}. As an example one may take any smooth projective surface $X$, with $H^1(X,\ZZ) \neq 0$ and take the monodromy group associated to a Lefschetz pencil in $X$.
 
\subsection{Overview}
\begin{enumerate}
    \item In the second section, we define the notion of the closure of a subgroup with respect to a group action and establish some useful properties related to this notion.
    \item In the third section we define several induction lemmas, these lemmas give us various inductive criteria for subgroups of the mapping class group to act with Zariski dense orbits on relative representation varieties.
    \item In the fourth section we discuss how to deal with some starting and ending points of our induction, we deal with the case of a torus with boundary and the case of framed mapping class groups.
    \item In the fifth section we discuss how to adapt the machinery of \cite{BS} to the non-simply-connected situation. While not everything goes through, the machinery can be modified to prove our desired Theorems.
\end{enumerate}
\subsection{Conventions}
In this paper henceforth, $G$ will always denote a complex connected semisimple algebraic group. We will sometimes drop the $G$ from our notation, i.e. we will denote the $G$ character variety by $\XX(\Sigma)$ as opposed to $\XX_G(\Sigma)$ if there is no risk of confusion. 
The letter $K$ will henceforth be used for a compact connected semisimple Lie group.
We will not drop $K$ from our notation, $K$ character varieties will always be denoted $\XX_K(\Sigma).$

Given a representation $\rho: \pi_1(\Sigma, *) \to G$, an unbased loop $\gamma \subseteq \Sigma$ and a conjugacy class $c \subseteq G$ we will say that $\rho(\gamma) \in c$ if for some choice of based loop $\tilde \gamma$
 isotopic to $\gamma,$ $\rho(\tilde \gamma) \in C.$

\subsection{Acknowledgments}
I would like to thank Nick Salter for many helpful suggestions on a preliminary draft of this paper. I would like to thank Jake Huryn for many helpful discussions about this topic. Finally, I would like to thank Daniel Litt for telling me about this problem.

\section{Closure with respect to a group action}
\begin{definition}
    Let $\Gamma$ be a discrete countable group acting on a probability space $(X,\mu)$ in a measure preserving way.   Let $\Gamma_0 \subseteq \Gamma$. The closure of $\Gamma_0$ in $\Gamma$ with respect to the above action is the group $$\overline{\Gamma_0} = \{g \in \Gamma |L^2(X)^{\Gamma_0} \subseteq L^2(X)^g\}.$$
\end{definition}
We note that we can analogously define the closure with respect to an algebraic action.
\begin{definition}[Closure with respect to an algebraic action]
    Let $\Gamma$ be a discrete group acting on a complex affine variety $X$. Let $\Gamma_0 \subseteq \Gamma$. The closure of $\Gamma_0$ in $\Gamma$ with respect to the above action is the group $$\overline{\Gamma_0} = \{g \in \Gamma |\CC(X)^{\Gamma_0} \subseteq \CC(X)^g\}.$$
\end{definition}

In other words, the closure of $\Gamma_0$ is the largest subgroup of $\Gamma$ such that a $\Gamma_0$ invariant function is also $\overline{\Gamma_0}$ invariant.

We note that the above definitions depend on the group action of $\Gamma$ on $X$ and is not intrinsic to the subgroup $\Gamma_0$.


\begin{lemma}\label{changeofclosurealg}
    Let $G$ be a complex reductive group.
    Let $\Sigma_0 \subseteq \Sigma$ be a subsurface with exactly one boundary component $b$. Let $* \in b$. We assume $\Sigma^c = \Sigma \setminus int( \Sigma_0)$ is of genus $\ge 2$.
Let $\Gamma _0 \subseteq \Mod(\Sigma_0) \subseteq \Mod(\Sigma,*).$ Given $g \in G$, let  $\Rc_g(\Sigma_0, *)$ denote the relative representation variety of $\Sigma_0$ with boundary monodromy $g$. i.e. $$\Rc_g(\Sigma_0, *) = \{\rho : \pi_1(\Sigma_0,*) \to G | \rho(b) =g\}.$$

Assume that for general $g$, $\Gamma_0$ acts with dense orbits on the relative character variety $\Rc_g(\Sigma_0, *)$. Then $\overline\Gamma_0 = \overline{\Mod(\Sigma_0)}$ with respect to the group action on $\Rc_G^{\partial}(\Sigma).$
\end{lemma}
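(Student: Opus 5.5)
The key structural fact I will use is that $\Rc^{\partial}_G(\Sigma)$ splits along the separating curve $b$. Since $b$ is a separating curve through $*$, van Kampen gives $\pi_1(\Sigma,*) = \pi_1(\Sigma_0,*) *_{\langle b\rangle} \pi_1(\Sigma^c,*)$. Here I read $\Rc^{\partial}_G(\Sigma)$ as the based representation variety $\Rc_G(\Sigma,*)$, on which $\Mod(\Sigma,*)$ acts. Restriction to the two pieces then identifies it with the fibre product
\[
\Rc_G(\Sigma,*) \cong \Rc_G(\Sigma_0,*)\times_G \Rc_G(\Sigma^c,*),
\]
taken over the boundary maps $\rho\mapsto \rho(b)$. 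Hence for each $g$ the fibre of the boundary map is $\Rc_g(\Sigma_0,*)\times \Rc_{g^{-1}}(\Sigma^c,*)$ (orientation convention aside). An element of $\Mod(\Sigma_0)\subseteq\Mod(\Sigma,*)$ acts only on the first factor, as the identity on the second, and it preserves $g$.

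The inclusion $\overline{\Gamma_0}\subseteq\overline{\Mod(\Sigma_0)}$ is immediate, because $\Gamma_0\subseteq \Mod(\Sigma_0)$ means $\CC(X)^{\Mod(\Sigma_0)}\subseteq \CC(X)^{\Gamma_0}$. The real content is the reverse inclusion. For that it suffices to show $\CC(X)^{\Gamma_0}=\CC(X)^{\Mod(\Sigma_0)}$, where $X=\Rc_G(\Sigma,*)$. Then any $h$ fixing all $\Gamma_0$-invariants fixes all $\Mod(\Sigma_0)$-invariants, and conversely.

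To prove $\CC(X)^{\Gamma_0}=\CC(X)^{\Mod(\Sigma_0)}$, I would show that every $\Gamma_0$-invariant rational function $f$ factors through the map
\[
X\to \Rc_G(\Sigma^c,*),\qquad \rho\mapsto\rho|_{\pi_1(\Sigma^c)}.
\]
Such a function is automatically $\Mod(\Sigma_0)$-invariant.

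The first step is to check that the restriction map $X\to \Rc_G(\Sigma^c,*)$ is dominant and that its general fibre is $\Rc_g(\Sigma_0,*)$ with $g^{-1}=\rho(b)$. The fibre description follows from the fibre product above. For dominance I need two things: the map $\Rc_G(\Sigma_0,*)\to G$, $\rho\mapsto\rho(b)$, hits a general $g$; and the boundary map on $\Sigma^c$ is likewise dominant with irreducible general fibre. Both follow from surjectivity of the commutator map on a semisimple group, together with irreducibility of the relevant representation varieties, which holds for genus $\ge 2$; this is where the hypothesis on $\Sigma^c$ enters. I would also need the general value of $\rho(b)$ over a general point of $\Rc_G(\Sigma^c,*)$ to be "general" in the sense required by the hypothesis. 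That holds because the image of $\rho\mapsto\rho(b)$ on $\Rc_G(\Sigma^c,*)$ is dense in $G$.

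Next, restrict $f$ to a general fibre $F_\sigma=\{\rho : \rho|_{\Sigma^c}=\sigma\}\cong \Rc_g(\Sigma_0,*)$. The restriction is a well-defined rational function for $\sigma$ in a dense open set, and it is $\Gamma_0$-invariant. Since $\Gamma_0$ has a dense orbit on $\Rc_g(\Sigma_0,*)$, Theorem 2.2 of \cite{KPS} (in the form that dense orbits force invariant rational functions to be constant) shows that $f|_{F_\sigma}$ is constant. Note that this step uses irreducibility of the fibre, or else density of an orbit in each component. A rational function on an irreducible variety that is constant on the general fibres of a dominant map with irreducible general fibres descends to a rational function on the base. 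So $f=\phi\circ \mathrm{res}$, and hence $f$ is $\Mod(\Sigma_0)$-invariant. The conclusion is insensitive to whether $X$ is taken modulo $G$-conjugation, since conjugation commutes with everything.

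The main obstacle is the descent step. To make "constant on general fibres implies pulled back from the base" rigorous, I would use the function field criterion: $\CC(\Rc_G(\Sigma^c))$ is algebraically closed in $\CC(X)$ when the generic fibre is geometrically irreducible. Constancy on geometric fibres then places $f$ in that subfield. Establishing geometric irreducibility of the generic fibre $\Rc_g(\Sigma_0,*)$ is the delicate input. For general $g$, I would try to deduce it from the density hypothesis itself: a $\Gamma_0$-orbit is dense, and $\Gamma_0$ is a group, so it permutes the components transitively. That alone only gives a finite cover, so I would instead use the known irreducibility of relative representation varieties with regular semisimple boundary for surfaces of positive genus.
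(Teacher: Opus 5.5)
Your proposal is correct and follows the paper's proof: restrict to $\Sigma^c$, identify the general fibre of $\Rc(\Sigma,*)\to\Rc(\Sigma^c,*)$ with $\Rc_g(\Sigma_0,*)$, use the dense $\Gamma_0$-orbits to make any $\Gamma_0$-invariant $f$ constant on general fibres, and descend to the base, which gives $\Mod(\Sigma_0)$-invariance (the paper simply asserts the descent). The irreducibility input you flag as the main obstacle is not needed, and is better dropped since it is false for non-simply-connected $G$ (for $PGL_2$ the fibre splits according to whether the lifted product of commutators in $SL_2$ is $\tilde g$ or $-\tilde g$): a single orbit dense in the whole fibre already makes $f$ take the same value on every component, so $f$ is algebraic over $\CC(\Rc(\Sigma^c,*))$ of degree $1$, because an element of degree $d>1$ would take $d$ distinct values on a general fibre.
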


\begin{proof}
    It suffices to establish that a $\Gamma_0$ invariant function $f \in \CC(\Rc(\Sigma,*))$ is  $\Mod(\Sigma_0)$ invariant as well.
    We have a $\Mod(\Sigma_0)$ invariant restriction map $\psi: \Rc(\Sigma, *) \to \Rc(\Sigma^c, *)$ whose fibers are precisely the relative representaion varieties $\Rc_g(\Sigma_0, *)$. By our assumptions, any $\Gamma_0$ invariant function on $\Rc(\Sigma,*)$ is constant on the general fiber of $\psi$ and hence must factor through $\psi.$ Thus $f$ is  $\Mod(\Sigma_0)$ invariant.
\end{proof}

\begin{lemma}\label{changeofclosureerg}
    Let $K$ be a compact connected reductive group.  
    Let $\Sigma_0 \subseteq \Sigma$ be a subsurface with exactly one boundary component $b$. Let $* \in b$. We assume $\Sigma^c = \Sigma \setminus int( \Sigma_0)$ is of genus $\ge 2$.
Let $\Gamma _0 \subseteq \Mod(\Sigma_0) \subseteq \Mod(\Sigma,*).$ Given $k \in K$, let  $\Rc_{k,K}(\Sigma_0, *)$ denote the relative representation variety of $\Sigma_0$ with boundary monodromy $k$. i.e. $$\Rc_{k,K}(\Sigma_0, *) = \{\rho : \pi_1(\Sigma_0,*) \to K | \rho(b) =k\}.$$

Assume that for a.e. $k$, $\Gamma_0$ acts with dense orbits on the relative character variety $\Rc_k(\Sigma_0, *)$. Then $\overline\Gamma_0 = \overline{\Mod(\Sigma_0)}$ with respect to the group action on $\Rc_K^{\partial}(\Sigma).$
\end{lemma}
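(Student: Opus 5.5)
We mirror the proof of Lemma \ref{changeofclosurealg}, replacing rational functions by $L^2$ functions and "constant on general fibres" by "a.e. constant on a.e. fibre". The plan is to show that every $\Gamma_0$-invariant $f\in L^2(\Rc_K(\Sigma,*))$ is $\Mod(\Sigma_0)$-invariant. Here the measure is the natural one on the relative representation space, e.g. Haar measure pulled back via the free generators of $\pi_1(\Sigma^c)$ together with the symplectic/Liouville measures on the fibres. Since $\Gamma_0 \subseteq \Mod(\Sigma_0)$, this yields $\overline{\Gamma_0} = \overline{\Mod(\Sigma_0)}$: any $g$ fixing all $\Gamma_0$-invariants fixes all $\Mod(\Sigma_0)$-invariants, and conversely.

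The first step is to set up the restriction map
\[\psi:\Rc_K(\Sigma,*)\to \Rc_K(\Sigma^c,*).\]
It is $\Mod(\Sigma_0)$-invariant, because mapping classes supported in $\Sigma_0$ (fixing $b$ pointwise) act trivially on $\pi_1(\Sigma^c,*)$. Its fibre over $\sigma$ is identified with $\Rc_{k,K}(\Sigma_0,*)$, where $k=\sigma(b)$. The second step is to disintegrate the measure $\mu$ along $\psi$, writing
\[\mu=\int \mu_\sigma \, d\nu(\sigma)\]
with $\mu_\sigma$ the natural (symplectic) measure on the relative space $\Rc_{k,K}(\Sigma_0,*)$. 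Each $\mu_\sigma$ is $\Mod(\Sigma_0)$-invariant. We need the pushforward of $\nu$ to $K$ under $\sigma\mapsto\sigma(b)$ to be absolutely continuous with respect to Haar measure. This is where the hypothesis that $\Sigma^c$ has genus $\ge 2$ enters: the map $\sigma\mapsto \sigma(b)$ is a product of commutators, and for genus $\ge 2$ with $K$ semisimple it is a submersion almost everywhere, so its pushforward of Haar measure is absolutely continuous. Consequently "a.e. $k$" in the hypothesis translates to "$\nu$-a.e. fibre".

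The third step uses Fubini. For $f$ invariant under $\Gamma_0$, which is countable, for $\nu$-a.e. $\sigma$ the restriction $f|_{\psi^{-1}(\sigma)}$ is $\Gamma_0$-invariant $\mu_\sigma$-a.e. Here one intersects over countably many group elements the conull sets on which $f\circ\gamma=f$. By the hypothesis, interpreted as ergodicity of $\Gamma_0$ on a.e. fibre, this restriction is $\mu_\sigma$-a.e. constant. Hence $f=h\circ\psi$ a.e. for some measurable $h$, obtained by taking fibrewise averages $h(\sigma)=\int f\,d\mu_\sigma$. Since $\psi$ is $\Mod(\Sigma_0)$-invariant and the action preserves $\mu$, it follows that $f\circ\phi = f$ a.e. for every $\phi\in\Mod(\Sigma_0)$.

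The main obstacle is the measure-theoretic bookkeeping in the second step. We must make sure the disintegration measures coincide, up to equivalence, with the measures on $\Rc_{k,K}(\Sigma_0,*)$ for which the hypothesis is stated, and that the null set of bad $k$ pulls back to a $\nu$-null set. I would handle this by working with the Haar-product measure on generator tuples and invoking the coarea formula for the commutator map, which is a submersion off a measure-zero set for genus $\ge 2$. It is equivalence classes of measures, rather than exact measures, that matter for ergodicity, so this suffices.
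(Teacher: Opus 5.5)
Your proposal is correct and is the same argument as the paper's. The paper proves this lemma ``mutatis mutandis'' from Lemma~\ref{changeofclosurealg}: the restriction map $\psi$ to $\Rc_K(\Sigma^c,*)$ is $\Mod(\Sigma_0)$-invariant and its fibres are the relative spaces $\Rc_{k,K}(\Sigma_0,*)$, so fibrewise ergodicity forces every $\Gamma_0$-invariant function to factor through $\psi$. Your disintegration, the countability/Fubini step, and the absolute continuity of the pushforward of $\nu$ under $\sigma\mapsto\sigma(b)$ are exactly the bookkeeping that ``mutatis mutandis'' leaves implicit.

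One point to make explicit: you use semisimplicity of $K$, which matches the paper's standing convention despite the word ``reductive'' in the statement, and this is genuinely needed. If $K$ has positive-dimensional centre, then every $\rho(b)$ with $\rho\in\Rc_K(\Sigma_0,*)$ lies in the Haar-null subgroup $[K,K]$, so the ``a.e.\ $k$'' hypothesis would carry no information. Also, for semisimple $K$, genus $\ge 1$ of $\Sigma^c$ already suffices for your submersion claim.
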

\begin{proof}
    This follows mutatis mutandis from the proof of the previous lemma.
\end{proof}
Let us now make a few observations on what these Lemmas gives us. Let $\Sigma$ be a closed surface, with a base point $*$. Let $\Gamma \subseteq \Mod(\Sigma, *)$ be a subgroup.  Let $\Sigma_0$ be a surface with one boundary component containing $*$. 
The combination of the above lemmas allows us to conclude that:
\begin{enumerate}
    \item 
   If $\Gamma \cap \Mod(\Sigma_0)$ acts with Zariski dense orbits on the relative representation varieties $\Rc_g( \Sigma_0, *)$ for general $g \in G$ then $\overline{\Gamma}$ contains the image of  $\Mod(\Sigma_0)$ in $\Mod(\Sigma
    ,*),$ here the closure is with respect to the $\Mod(\Sigma, *)$ action on $\Rc(\Sigma,*).$
    \item   If $\Gamma \cap \Mod(\Sigma_0)$ acts ergodically on the relative representation varieties $\Rc_{k,K}( \Sigma_0, *)$ for general $k \in K$ then $\overline{\Gamma}$ contains the image of $\Mod(\Sigma_0)$ in $\Mod(\Sigma
    ,*),$ here the closure is with respect to the $\Mod(\Sigma,*)$ action on $\Rc_K(\Sigma,*).$
\end{enumerate}

These two observations will be helpful to us in computing closures of various subgroups, if we have a group $\Gamma \subseteq \Mod(\Sigma)$ such that $\Gamma \cap \Mod(\Sigma_0)$ acts with Zariski dense orbits on the relevant representation varieties we can conclude that $\overline\Gamma = \overline{<\Gamma, \Mod(\Sigma_0)>},$ and $<\Gamma, \Mod(\Sigma_0)>$ is often a more manageable group than $\Gamma$ itself. 

\section{Induction}
In this section we will establish a few induction lemmas that enable us to conclude that certain groups act with Zariski dense orbits or ergodically on relative representation varieties of larger and larger subsurfaces.

\subsection{Gluing onto the same boundary (algebraic)}
      Let $\Sigma $ be a surface of genus $\ge 2$ with a distinguished boundary component $b,$ and $n$ other boundary components $\delta_1, \dots, \delta_n $. We glue an arc $l$ to $\Sigma$, with bouth end points being glued to $b.$ We can the thicken $\Sigma \cup l$ into a surface $\Sigma^+,$ by thickening the arc $l$ in to a rectangular strip $S$.     Let $P$ denote the union of $b$ and  $S$. The subset  $P$ is homotopy equivalent to a pair of pants. The resulting surface $\Sigma^+$ then has two new distinguished boundary components $b_1$ and $b_2$. Let $d_1 ,\dots, d_n$ be $n$ conjugacy classes in $G$. Let $g_1 \in G$ and let $ c_2$ be a conjugacy class in $G$. Let $* \in \Sigma^+$ be a point in $b_1 \cap b$.
    
    We first define some variants of representation varieties.
    \begin{definition}
      
        Let $\Rc^{\partial} (\Sigma^+,*)$ be the representation variety of all representations $\rho: \pi_1(\Sigma^+, *) \to G$ such that:
        \begin{enumerate}
        \item $\rho(b_1) =g_1.$
        \item  $\rho(b_2) \in c_2$. 
        \item For $1\le i \le n$, we have $\rho(\delta_i) \in d_i.$
        \end{enumerate}
    \end{definition}
    \begin{definition}
            Let $g \in G$. Let $\Rc_g^{\partial} (\Sigma, *)$ denote the representation variety of all representations $\rho: \pi_1 (\Sigma,*) \to G$ such that: 
    \begin{enumerate}
        \item $\rho(b) =g$,
        \item For $1\le i \le n$, we have $\rho(\delta_i) \in d_i$.
    \end{enumerate}
    \end{definition}

    We assume in this subsection that for all $g$ (and for fixed $d_1 ,\dots, d_n$), we have already proven that $\Mod(\Sigma)$ acts on $\Rc_g^{\partial}(\Sigma, *)$ with Zariski dense orbits.
\begin{definition}
        Let $\Rc^{\partial}(P, *)$ denote the representation variety of all representations $\rho: \pi_1(P, *) \to G$ such that: 
    \begin{enumerate}
        \item $\rho(b_1) = g_1$ 
        \item $\rho (b_2) \in c_2.$
    \end{enumerate}
\end{definition}

    We note that $\Rc^ \partial(P, *) \cong c_2$, with the isomorphism
     given as follows. Take an arc $a$ in $P$ going from $*$ to $b_2$  moving along $b$. Let $\tilde b_2$ denote the loop obtained by following $a$, going once around $b_2,$ and then coming back via $a$. The map $\varphi: \Rc^ \partial(P, *) \to  c_2$ given by $\varphi(\rho) = \rho (\tilde b_2)$ is an isomorphism.

     \begin{figure}
         \centering
         \includegraphics[width=0.8\linewidth]{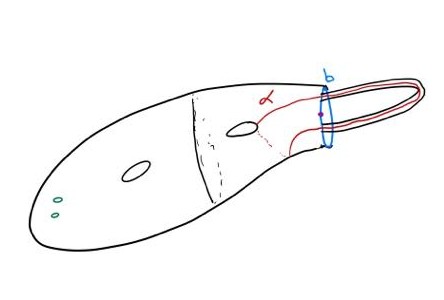}
         \caption{A depiction of the situation of Lemma \ref{indsamebound}}
         \label{fig:samebound}
     \end{figure}
     
\begin{lemma}\label{indsamebound}
    Let $\alpha \subseteq \Sigma^+$ be a loop  as depicted in figure \ref{fig:samebound}, it goes through the attached strip exactly once.
The group $<\Mod(\Sigma), T_{\alpha}>$ acts with Zariski dense orbits on $\Rc^{\partial}(\Sigma^+, *).$
\end{lemma}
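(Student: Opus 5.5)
We will use the Katzarkov--Pantev--Simpson criterion (Theorem 2.2 of \cite{KPS}): it suffices to show that any rational function $f$ on $\Rc^{\partial}(\Sigma^+,*)$ invariant under $\langle \Mod(\Sigma), T_\alpha\rangle$ is constant.

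\textbf{Step 1: reduce to a function of the strip coordinate.} Restriction to $\Sigma$ and to $P$ gives a map
$$\Psi: \Rc^{\partial}(\Sigma^+,*) \to c_2, \qquad \rho \mapsto \varphi(\rho|_P) = \rho(\tilde b_2).$$
The fiber of $\Psi$ over $x \in c_2$ is identified with $\Rc^{\partial}_{g}(\Sigma,*)$, where $g$ is determined by $g_1$ and $x$ through the pants relation ($g = g_1 x$ up to convention). $\Mod(\Sigma)$ acts fiberwise, and by the standing hypothesis it has Zariski dense orbits on every $\Rc^{\partial}_g(\Sigma,*)$. Hence $f$ is constant on a general fiber, so, exactly as in the proof of Lemma \ref{changeofclosurealg}, $f = h\circ \Psi$ for some $h \in \CC(c_2)$.

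\textbf{Step 2: describe $T_\alpha$ in these coordinates.} Write $\alpha$ as the arc through the strip followed by a path in $\Sigma$ from one endpoint on $b$ back to the other. Choose generators of $\pi_1(\Sigma^+,*)$ consisting of $\pi_1(\Sigma,*)$ together with the loop $\tilde b_2$ (or, equivalently, the loop $\beta$ running once through the strip). Then $T_\alpha$ fixes $\pi_1(\Sigma,*)$ up to a modification along $\alpha$, and sends $\tilde b_2$ to a conjugate
$$\tilde b_2 \mapsto w\,\tilde b_2\,w^{-1},$$
where $w$ is a word in $\tilde b_2$ and an element $\gamma \in \pi_1(\Sigma,*)$ (the part of $\alpha$ inside $\Sigma$). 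Consequently $\Psi(T_\alpha\rho) = \rho(w)\,x\,\rho(w)^{-1}$ with $x = \Psi(\rho)$, and invariance gives $h(x) = h(\rho(w) x \rho(w)^{-1})$.

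\textbf{Step 3: use the freedom in the fiber.} Fix $x$ and let $\rho$ range over the fiber $\Rc^{\partial}_{g}(\Sigma,*)$. For general $g$ and genus $\ge 2$, the values $\rho(\gamma)$ should sweep out a Zariski dense subset of $G$; this follows from the standard surjectivity and dominance of word maps for surface groups with a fixed boundary value. Then $\rho(w)$ sweeps out a Zariski dense subset of $G$, so $h$ is invariant under conjugation by a dense set of elements. By continuity on its domain of definition, $h$ is constant on the single conjugacy class $c_2$. Thus $f$ is constant, which proves Lemma \ref{indsamebound}.

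\textbf{Main obstacle.} The hard part is Step 3, specifically the combinatorics of Step 2: we must check that $w$ really involves a free element $\gamma$ of $\pi_1(\Sigma)$, so that it varies independently of the constraints $\rho(b)=g$ and $\rho(\delta_i)\in d_i$. We must also verify the dominance claim for the map $\rho \mapsto \rho(\gamma)$ on the relative representation variety. For the latter, we would first try taking $\gamma$ to be a nonseparating simple loop and using the fact that fixing $\rho(\gamma)$ leaves a relative representation variety of a surface of lower genus, which is nonempty for general data.
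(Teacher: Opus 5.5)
Your proposal is correct and is essentially the paper's proof. Both arguments run as follows: factor $f$ through the restriction $\psi$ to $P$, so that $f=\bar f\circ\psi$ with $\bar f\in\CC(c_2)$, using density of $\Mod(\Sigma)$ on the fibers; observe that $T_\alpha\cdot\tilde b_2=\tilde\alpha\,\tilde b_2\,\tilde\alpha^{-1}$ (your $w$ is the paper's $\tilde\alpha$); then use that, with $\rho(\tilde b_2)=x$ fixed, $\rho(\tilde\alpha)$ ranges over an open dense subset of $G$, so $\bar f$ is conjugation-invariant on $c_2$ and hence constant.

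The paper simply asserts the dominance you flag as the main obstacle. One thing to adjust: the boundary value $g$ of the fiber over $x$ runs over $g_1c_2$, so it is not general in $G$. This is harmless, since for a nonseparating simple $\gamma$ and genus $\ge 2$ the map $\rho\mapsto\rho(\gamma)$ on $\Rc^{\partial}_g(\Sigma,*)$ is surjective for every $g$, because every element of a connected complex semisimple group is a commutator.
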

\begin{proof}
    Suppose for the sake of contradiction that there is a nonconstant $<\Mod(\Sigma), T_{\alpha}>$  invariant function $f \in \CC(\Rc^{\partial}(\Sigma^+, *)).$

  We have a restriction map $\psi: \Rc^{\partial}(\Sigma^+,*) \to\Rc^{\partial}(P, *),$ this map is $\Mod(\Sigma)$ invariant. Each fiber of $\psi$ is equivariantly isomorphic to   $\Rc_g^{\partial}(\Sigma, *),$ for $g = \rho(b),$ and by our assumption $\Mod(\Sigma)$ acts with Zariski dense orbits on these varieties. Thus $f$ must be constant on these fibers. The map $\psi$ is also dominant. Thus the rational function $f$ must be of the form $\bar f \circ \psi,$ for some $\bar f \in \CC(\Rc^{\partial}(P)).$ 
    We will now prove that $\bar f$ is constant. Recall that $\Rc^{\partial}(P) \cong c_2$ and thus $\bar f$ can be viewed as a rational function on $c_2$. We will prove that this $\bar f$ is in fact conjugation invariant, and hence constant.
    
    Let $\tilde \alpha\in \pi_1(\Sigma^+ , *)$ denote the loop obtained by following $a$ till $a$ intersects $\alpha$ going once around $\alpha$ and then coming back to $*$ along $a$.  We note that $T_{\alpha} \cdot \tilde b_2 = \tilde {\alpha} \tilde b_2 \tilde {\alpha}^{-1}.$ Let $x \in c_2$ be a general element. Importantly, there is an open set $U \subseteq G$, such that for any $y \in U$, there exists $\rho \in \Rc(\Sigma^+, *)$ satisfying $\rho(\tilde \alpha) =y$ and $\rho(\tilde b_2) = x$. This implies  that $$ \bar f (yxy^{-1})= f(T_{\alpha} \cdot \rho)= f(\rho) = \bar f(x).$$ Thus, $\bar f$ is constant and so is $f$.
\end{proof}
\subsection{Gluing on to the same boundary component (ergodic)}
In this subsection, we discuss gluing onto the same boundary component in  the case of $K$ representation varieties. 
Let $\Sigma, \Sigma^+ , P,$ etc. be as it was in the above subsection.Let $d_1 ,\dots, d_n$ be $n$ conjugacy classes in $K$. Let $c_2$ be a conjugacy class in $K$. Let $k_1 \in K.$ Let $* \in \Sigma^+$ be a point in $b_1 \cap b$.
   
    \begin{definition}
        Let $\Rc^{\partial}_K (\Sigma^+,*)$ be the representation variety of all representations $\rho: \pi_1(\Sigma^+, *) \to K$ such that:
    \begin{enumerate}
    \item $\rho(b_1) =k_1.$
    \item  $\rho(b_2) \in c_2$. 
    \item For $1\le i \le n$, we have $\rho(\delta_i) \in d_i.$
    \end{enumerate}

    \end{definition}
   \begin{definition}
           Let $k \in K$. Let $\Rc_{k,K}^{\partial} (\Sigma, *)$ denote the representation variety of all representations $\rho: \pi_1 (\Sigma,*) \to K$ such that: 
    \begin{enumerate}
        \item $\rho(b) =k$,
        \item For $1\le i \le n$, we have $\rho(\delta_i) \in d_i$.
    \end{enumerate}
   \end{definition}

    We assume in this subsection that for all $k$, we have already proven that $\Mod(\Sigma)$ acts on $\Rc_{k,K}^{\partial}(\Sigma, *)$ ergodically.
\begin{definition}
        Let $\Rc^{\partial}(P, *)$ denote the representation variety of all representations $\rho: \pi_1(P, *) \to K$ such that:
    \begin{enumerate}
        \item  $\rho(b_1) = k_1$
        \item $\rho (b_2) \in c_2.$
    \end{enumerate}
\end{definition}
 We note that $\Rc^ \partial(P, *) \cong c_2$, with the isomorphism
     given as follows. Take an arc $a$ in $P$ going from$*$ to $b_2$  moving along $b$. Let $\tilde b_2$ denote the loop obtained by following $a$, going once around $b_2$ and then coming back via $a$. The map $\varphi: \Rc^ \partial(P, *) \to  c_2 $ given by $\varphi(\rho) = \rho (\tilde b_2)$ is an isomorphism.

\begin{lemma}\label{indsamebound}
The group $<\Mod(\Sigma), T_{\alpha}>$ acts with dense orbits on $\Rc^{\partial}_K(\Sigma^+, *).$
\end{lemma}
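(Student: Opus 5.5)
The plan is to deduce density of orbits from ergodicity. Here ``dense orbits'' is read as: for almost every $\rho\in\Rc^{\partial}_K(\Sigma^+,*)$, with respect to the symplectic (Liouville) measure $\mu$, the orbit of $\rho$ is dense in the support of $\mu$. Write $\Gamma^+ = <\Mod(\Sigma), T_{\alpha}>$. Ergodicity is only an intermediate step; the actual claim is obtained from it by a standard Baire/countable-base argument at the end.

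\textbf{Step 1 (reduction to the base).} I will prove that every $\Gamma^+$-invariant $f\in L^\infty(\Rc^{\partial}_K(\Sigma^+,*),\mu)$ is a.e. constant. Consider the restriction map $\psi:\Rc^{\partial}_K(\Sigma^+,*)\to \Rc^{\partial}(P,*)\cong c_2$, given by $\rho\mapsto \rho(\tilde b_2)$.
\begin{itemize}
\item The map $\psi$ is $\Mod(\Sigma)$-invariant.
\item Its fibres are equivariantly the spaces $\Rc^{\partial}_{k,K}(\Sigma,*)$ with $k=\rho(b)$.
\item Disintegrating $\mu$ along $\psi$, the conditional measures are the symplectic measures on these fibres. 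This is the usual compatibility of Goldman forms under gluing; I will take it for granted.
\end{itemize}
By the standing assumption, $\Mod(\Sigma)$ is ergodic on a.e. fibre. Hence $f$ is a.e. constant on a.e. fibre, so $f=\bar f\circ\psi$ a.e. for some measurable $\bar f$ on $c_2$.

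\textbf{Step 2 (using $T_\alpha$).} As in the algebraic Lemma \ref{indsamebound}, $T_\alpha\cdot\tilde b_2=\tilde\alpha\tilde b_2\tilde\alpha^{-1}$. Invariance of $f$ under $T_\alpha$ therefore gives
$$\bar f\bigl(\rho(\tilde\alpha)\,x\,\rho(\tilde\alpha)^{-1}\bigr)=\bar f(x), \qquad x=\rho(\tilde b_2),$$
for $\mu$-a.e. $\rho$.

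The main obstacle is showing that the pushforward of $\mu$ under $\rho\mapsto(\rho(\tilde\alpha),\rho(\tilde b_2))\in K\times c_2$ is absolutely continuous with respect to Haar measure times the invariant measure, with positive density on a nonempty open set $U\times V$. I would argue as follows.
\begin{itemize}
\item Because $\alpha$ crosses the strip exactly once, $\rho(\tilde\alpha)$ involves the free generator $s$ dual to the strip.
\item Because $\Sigma$ has genus $\ge 2$, the product of commutators together with the boundary data $d_i$ can realize any value of $\rho(b)$ near a generic point.
\item So the map $(\rho(\tilde\alpha),\rho(\tilde b_2))$ is a submersion at a generic smooth point, which is the real-analytic analogue of the open set $U$ used in the algebraic proof.
\end{itemize}
Granting this, for $y$ in a positive-Haar-measure set $S\subseteq K$ we get $\bar f(yxy^{-1})=\bar f(x)$ for a.e. $x\in V$.

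\textbf{Step 3 (from $S$ to all of $K$).} I will pass from this local statement to the whole of $K$ as follows.
\begin{itemize}
\item First enlarge to all $x$: replace $V$ by its translates, using that a.e. point of $c_2$ arises as $\psi(\rho)$ for such $\rho$.
\item The set $H=\{y\in K:\ \bar f(yxy^{-1})=\bar f(x)\text{ for a.e. }x\}$ is a measurable subgroup of $K$ (closedness under products and inverses follows from $K$-invariance of the measure on $c_2$).
\item $H$ contains $S$, which has positive Haar measure, so by Steinhaus $H$ is open. Since $K$ is connected, $H=K$.
\item Thus $\bar f$ is a.e. invariant under the transitive conjugation action of $K$ on $c_2$, hence a.e. constant, and so is $f$. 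This proves ergodicity.
\end{itemize}

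\textbf{Step 4 (dense orbits).} Let $\{U_n\}$ be a countable base of nonempty open subsets of $\operatorname{supp}\mu$, so each $\mu(U_n)>0$. The set $\Gamma^+\cdot U_n$ is invariant and has positive measure, so by ergodicity it is conull. Hence
$$E=\bigcap_n \Gamma^+\cdot U_n$$
is conull. Every $\rho\in E$ has an orbit meeting each $U_n$, i.e. the orbit of $\rho$ is dense. This gives the Lemma as stated.
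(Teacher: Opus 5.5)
Your route is the paper's: an invariant function factors through $\psi$ by ergodicity of $\Mod(\Sigma)$ on the fibres, and then $T_\alpha$ forces $\bar f$ to be conjugation invariant on $c_2$. The paper gives only a ``mutatis mutandis'' sketch, and its ``dense orbits'' really means ergodicity; your Step 4, upgrading ergodicity to a.e.\ dense orbits, is correct.

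The trouble is in the measure-theoretic passage you added, which is exactly what ``exactly as in the previous lemma'' hides. In the algebraic case, an identity $\bar f(yxy^{-1})=\bar f(x)$ on a Euclidean-open set of pairs $(y,x)$ extends to all pairs, but for $L^2$ functions it does not. Your subgroup $H$ needs a single $y$ to satisfy the identity for a.e.\ $x$ in \emph{all} of $c_2$, while Step 2 gives it only on one patch $U\times V$. The first bullet of Step 3 does not close this gap. Global conjugation preserves the constraint $\rho(b_1)=k_1$ only for elements of $Z(k_1)$, and it moves $U$ together with $V$. Covering $c_2$ by patches $U_i\times V_i$ produces sets $U_i$ whose intersection may be empty. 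So $S\subseteq H$ is not established, and Steinhaus has nothing to act on.

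One fix is to prove the global statement that the push-forward of $\mu$ under $\Phi(\rho)=(\rho(\tilde\alpha),\rho(\tilde b_2))$ is equivalent to Haar times the invariant measure on all of $K\times c_2$. On each fibre $\psi^{-1}(x)$, $\rho(\tilde\alpha)$ takes every value in $K$: since $\Sigma$ has genus $\ge 2$, one handle can carry $\rho(\tilde\alpha)$, and the remaining ones can be solved for the prescribed $\rho(b)$ because every element of a compact connected semisimple group is a commutator. A Sard-type argument then gives positive density a.e., Fubini puts a.e.\ $y$ in $H$, and $H=K$ follows directly.

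Alternatively, you can keep your Steinhaus step. Comparing $y_1,y_2\in U$ gives $\bar f(wxw^{-1})=\bar f(x)$ for a.e.\ $(w,x)$ near $(1,x_0)$. If such patches exist around every $x_0\in c_2$, compactness yields one neighbourhood $N$ of $1$ with a.e.\ $w\in N$ in $H$.

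Separately, your stated reason for submersivity misplaces the freedom. The strip generator $s$ is not free: $b_1$ is homotopic rel $*$ to the loop running once across the strip and back along $b$, so $\rho(s)=\rho(b_1)=k_1$ is pinned. All variation of $\rho(\tilde\alpha)$ and of $\rho(\tilde b_2)$ (which is determined by $\rho(b)$ and $k_1$) must come from $\rho|_{\pi_1(\Sigma)}$. Genus $\ge 2$ does supply this, but not for the reason you give.
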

\begin{proof}
    This follows mutatis mutandis from the proof of the previous lemma, we wil only sketch the proof. Any  $L^2$ function $f$, invariant under the given group, must factor through $\psi : \Rc^{\partial}(\Sigma^+ , *) \to \Rc^{\partial}(P, *)$ (as an $L^2$ function). We can then write $f = \bar f \circ \psi$. We then show, exactly as in the previous lemma, that $\bar f$ is conjugation invariant.
\end{proof}
 

\subsection{Gluing on a torus (algebraic)}\label{gluetor}
    Let $\Sigma $ be a surface of genus $\ge 1$ with a distinguished boundary component $b_0$ and $n$ other boundary components $\delta_1, \dots, \delta_n$. Let $\Sigma^+$ be the surface formed by gluing a two holed torus to $\Sigma$ with one boundary glued to $b_0$. We will call the other boundary component of the torus $b$. Let $* \in \Sigma^+$ be a point in $b$. Let $*_0$ be a point in $b_0$. Let $d_1, \dots, d_n$ be $n$ conjugacy classes in $G$. 
    
    \begin{definition}
        Let $g \in G$. Let $\Rc^{\partial}_g (\Sigma^+,*)$ be the representation variety of all representations $\rho : \pi_1(\Sigma^+, * ) \to G$ such that:
    \begin{enumerate}
        \item $\rho(b) =g$
        \item $\rho(\delta_i) \in d_i$
    \end{enumerate}
    
    \end{definition}

\begin{definition}
        Let $g_0 \in G$. Let $\Rc^{\partial}_{g_0}(\Sigma,*_0)$ be the set of representations $\rho: \pi_1(\Sigma, *_0) \to G$ such that:
    \begin{enumerate}
        \item $\rho(b_0) =g_0$
        \item $\rho(\delta_i) \in d_i$
    \end{enumerate}
\end{definition}

    We assume in this subsection that for general $g_0$, $\Mod(\Sigma)$ acts with dense orbits on  $\Rc^{\partial}_{g_0}(\Sigma,*_0)$.

    Given $g \in G,$ let $\Rc_g(T,*)$ denote the set of representations $\rho:\pi_1(T, *) \to G$ such that $\rho(b) =g.$ Importantly, we make no restrictions on the behaviour of $\rho$ on $b_0.$
    
    Let $x,y \in \pi_1(T, *)$ be two loops as depicted in Figure \ref{fig:indpm}.
    We note that the map $\Rc_g(T ,*) \to G \times G$ given by $\rho \mapsto (\rho(x), \rho(y))$ is an isomorphism.

\begin{figure}
    \centering
    \includegraphics[width=0.8\linewidth]{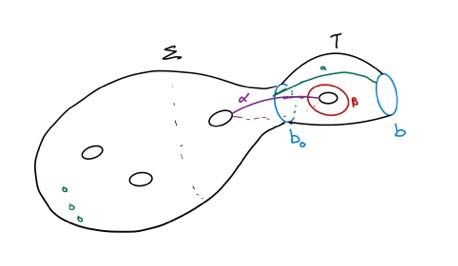}
    \caption{The figure depicts the situation of Lemma \ref{indtor}.}
    \label{fig:placeholder}
\end{figure}

\begin{lemma}\label{indtor}

    Let $\alpha, \beta \subseteq \Sigma^+$ be simple closed curves such that:
    \begin{enumerate}
        \item $\alpha\cap T $ is a single arc.
        \item $\alpha$ intersects $x$ once transversely and does not intersect $y$.
        \item $\beta$ is a meridianal loop in $T$, as depicted in the figure.
        \item $\beta $ intersects $y$ once transversely.
    \end{enumerate}
    The group $ \overline{<\Mod(\Sigma) , T_{\alpha} , T_{\beta}>}$ acts with Zariski dense orbits on $\Rc^{\partial}_g (\Sigma^+,*).$
\end{lemma}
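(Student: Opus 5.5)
We follow the pattern of the proof of Lemma \ref{indsamebound}. Write $H=\overline{<\Mod(\Sigma),T_\alpha,T_\beta>}$, the closure taken with respect to the action on $\Rc^{\partial}_g(\Sigma^+,*)$. By definition of the closure, it is enough to show that any $f\in\CC(\Rc^{\partial}_g(\Sigma^+,*))$ invariant under $\Mod(\Sigma)$, $T_\alpha$ and $T_\beta$ is constant.

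\textbf{Step 1: reduce to the torus.} Restriction gives a map $\psi:\Rc^{\partial}_g(\Sigma^+,*)\to\Rc_g(T,*)\cong G\times G$, $\rho\mapsto(\rho(x),\rho(y))$. This map is $\Mod(\Sigma)$ invariant. Its fiber over a point $(X,Y)$ is equivariantly identified with $\Rc^{\partial}_{g_0}(\Sigma,*_0)$, where $g_0$ is the monodromy along $b_0$; it is determined by $g,X,Y$ after transporting along an arc from $*$ to $*_0$. For general $(X,Y)$ the element $g_0$ is general, because $g_0$ is essentially $g$ times a commutator $[X,Y]$ and the commutator map of a semisimple group is dominant. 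By hypothesis $\Mod(\Sigma)$ then acts with dense orbits on the general fiber. We also need $\psi$ to be dominant; this holds because every element of $G$ is a product of an element of $d_1\cdots d_n$-type boundary data with commutators, as $\Sigma$ has genus $\ge1$. Arguing as in Lemma \ref{changeofclosurealg}, we get $f=\bar f\circ\psi$ with $\bar f\in\CC(G\times G)$.

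\textbf{Step 2: the two twists.} The curve $\beta$ lies in $T$ and meets $y$ once but not $x$. So $T_\beta$ acts on $\Rc_g(T,*)$ by the Nielsen move $(X,Y)\mapsto(X,YX^{\pm1})$, up to conventions, possibly with $x$ and $y$ interchanged. This gives
$$\bar f(X,Y)=\bar f(X,YX).$$
The curve $\alpha$ passes through $T$ in a single arc, meets $x$ once, and avoids $y$. Choose a based lift $\tilde\alpha$ via an arc, as in Lemma \ref{indsamebound}. Then $T_\alpha$ sends $x$ to $x\tilde\alpha$ (or $\tilde\alpha x$) and fixes $y$. Here $\tilde\alpha$ is the product of the arc segment in $T$ and a loop $\gamma$ running through $\Sigma$. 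Hence $\rho(T_\alpha x)=X\cdot W$, where $W=\rho(\gamma)$ is some word depending also on $\rho|_\Sigma$. The key claim is that, for fixed general $(X,Y)$, the values $W$ taken over the fiber $\psi^{-1}(X,Y)$ fill an open (Zariski dense) subset $U\subseteq G$. This is the analogue of the open set $U$ used in Lemma \ref{indsamebound}, and it holds because the genus of $\Sigma$ is $\ge1$, so relative representation varieties of $\Sigma$ have enough freedom on any non-boundary loop. From this we get $\bar f(X,Y)=\bar f(XW,Y)$ for $W$ in a dense set. Since $\bar f$ is rational, $\bar f(X,Y)$ is independent of $X$.

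\textbf{Step 3: conclude.} Now $\bar f(X,Y)=h(Y)$. The $T_\beta$ relation gives $h(Y)=h(YX)$ for general $X$, so $h$ is constant, and therefore $f$ is constant.

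\textbf{Main obstacle.} The hardest step is the transversality claim in Step 2: that the element $W$ contributed by the part of $\alpha$ outside $T$ varies over a dense set as $\rho$ ranges over a general fiber, with the boundary monodromy and the conjugacy classes $d_i$ held fixed. To prove it, we would first choose $\gamma$ to be a nonseparating curve in $\Sigma$. We would then use that the map $\rho\mapsto\rho(\gamma)$ on $\Rc^{\partial}_{g_0}(\Sigma,*_0)$ is dominant onto $G$. This follows by writing the standard presentation with $\gamma$ as a standard generator $a_1$ and solving the boundary relation using the partner generator $b_1$ together with the remaining handles, again relying on dominance of commutator-type maps.
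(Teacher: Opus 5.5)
Your overall structure matches the paper's, and Steps 1 and 3 are fine: you factor $f$ through $\psi$ using $\Mod(\Sigma)$-invariance, remove the dependence on $\rho(x)$ with $T_\alpha$, then remove the dependence on $\rho(y)$ with $T_\beta$. The gap is the key claim of Step 2, which fails in exactly the case where the lemma is first needed.

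The lemma allows $\Sigma$ of genus $1$. Lemma \ref{framedense} starts its induction from a one-holed torus $\Sigma_0$, so there $n=0$. In that case $\pi_1(\Sigma,*_0)=\langle a_1,b_1\rangle$, and the fiber $\psi^{-1}(X,Y)$ is $\{(A,B): [A,B]=g_0\}$. The relation $[A,B]=g_0$ says $A^{-1}g_0=BA^{-1}B^{-1}$, i.e.\ $A^{-1}g_0$ must be conjugate to $A^{-1}$. This imposes $\mathrm{rank}\,G$ independent conditions on $A=\rho(a_1)$. So as $\rho$ runs over the fiber, $\rho(\gamma)$ (with $\gamma=a_1$), and hence $W$, sweeps out only a subvariety of dimension $\dim G-\mathrm{rank}\,G$. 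For $G=SL_2(\CC)$ this is the surface $\mathrm{tr}(A^{-1}g_0)=\mathrm{tr}(A^{-1})$, not an open set.

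Your justification in the last paragraph breaks down exactly here. With no remaining handles, the partner generator $b_1$ alone cannot solve the boundary relation for general $A$. The same failure occurs for $n\ge 1$ when, say, all $d_i$ are central. For genus $\ge 2$ your claim is true, because the commutator map on a second handle is dominant, and there your proof goes through.

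The paper avoids this by asserting only that, for general $(X,Y)$, the set of values $\rho(\tilde\alpha)$ over the fiber \emph{generates} $G$. It explicitly notes that this holds in genus $1$. To repair your argument you need a statement of that kind, together with an honest iteration:

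A single $T_\alpha$-move only moves $X$ within a translate of a proper subvariety. Moreover, the set of admissible $W$ depends on the current point through $g_0$ and through the conjugating words in $T$. So you must show that the points reachable from $X$ by successive $T_\alpha$-moves are Zariski dense in $G$, where the second move is taken at $(WX,Y)$, and so on. The dimension count $2(\dim G-\mathrm{rank}\,G)\ge\dim G$ suggests two moves should suffice, but a nondegeneracy argument is still required. The paper's one-line deduction from generation glosses over this same dependence. Only after this step can you conclude that $\bar f$ does not depend on its first argument.
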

 \begin{proof}
    Let $\tilde \alpha \in \pi_1(\Sigma^+, *)$ denote the loop obtained by following $x$ until it intersects $\alpha$ going around $\alpha$ once and then going back the way we came. We note that $T_{\alpha} \cdot x = \tilde \alpha x$. Similarly $T_{\beta}\cdot y = \tilde \beta y$ where $\tilde \beta$ is defined analogously (with $x$ replaced by $y$).

     Let $\psi: \Rc_g^{\partial}(\Sigma^+, *) \to \Rc_g(T, *)$ denote the restriction map, this map is dominant and $\Mod(\Sigma_0)$ invariant. We will now describe the fibers of this map. 
     Let $a_0$ denote a fixed arc in $T$ joining $* \to *_0$.  Let $\tilde b_0 \in \pi_1(T, *)$ be the loop obtained by following $a_0$ going around $b_0$ and coming back along $a_0$.  Let $\Sigma_1$ denote a small thickening of $\Sigma \cup a$, it is homeomorphic to $\Sigma$ and for any $g_0 \in G$, we have a $\Mod(\Sigma) \cong \Mod(\Sigma_1)$ equivariant isomorphism $\Rc^{\partial}_{g_0}(\Sigma, *_0) \cong \Rc^{\partial}_{g_0}(\Sigma_1, *),$ where the latter variety is the set of representations $\rho:\pi_1 (\Sigma_1, *) \to G$ such that:
     \begin{enumerate}
         \item $\rho(\tilde b_0) = g_0$
         \item $\rho(\delta_i) \in d_i$
     \end{enumerate}
        
    Let $\rho \in \Rc_g(T, *)$.  The fiber $\psi^{-1}(\rho) $ is precisely $\Rc^{\partial}_{\rho (\tilde b_0)}(\Sigma_1, *) \cong \Rc^{\partial}_{\rho(\tilde b_0)} (\Sigma, *_0)!$ We note that for general $\rho$, $\rho(\tilde b_0)$ is a general element in $G$. Since $\Mod(\Sigma)$ acts with dense orbits on  $\Rc_{\rho(\tilde b_0)} (\Sigma, *_0),$ it acts with dense orbits on the fibers. Thus any  $ \overline{<\Mod(\Sigma) , T_{\alpha} , T_{\beta}>}$ invariant rational function must be of the form $\bar f \circ \psi$. 
    
    We will now prove that such a $\bar f$ must be constant. Since $f$ is also $T_{\alpha}$ invariant, we have that $\bar f(\rho(x), \rho(y))$ is also equal to $\bar f( \rho(\tilde \alpha)\rho(x) ,\rho(y)).$ We then note that for  general $a,b \in G $ the group generated by the set  $\{\rho(\tilde \alpha) | \rho(x) =a , \rho(y) =b\}$ is $G$ (this is true even when $\Sigma$ has genus 1). Thus we obtain $\bar f(\rho(x), \rho(y)) = \bar f(G\cdot\rho(x), \rho(y))$ and so $\bar f$ does not depend on the first coordinate. Similarly, we can use the $T_{\beta}$ invariance to conclude that it does not depend on the second coordinate as well.
 \end{proof}   
\subsection{Gluing on a torus (ergodic)}\label{gluetorerg}
    Let $\Sigma ,\Sigma^+, b,$ etc. be as in the previous section.
    Let $d_1, \dots, d_n$ be $n$ conjugacy classes in $K$.
    \begin{definition}
            Let $k \in K.$ Let $\Rc^{\partial}_K (\Sigma^+,*)$ be the representation variety of all representations $\rho : \pi_1(\Sigma^+, * ) \to K$ such that:
    \begin{enumerate}
        \item $\rho(b) =k$
        \item $\rho(\delta_i) \in d_i$
    \end{enumerate}
    \end{definition}

\begin{definition}
        Let $k_0 \in K$. Let $\Rc_{k_0,K}(\Sigma,*_0)$ be the set of representations $\rho: \pi_1(\Sigma, *_0) \to K$ such that:
    \begin{enumerate}
        \item $\rho(b_0) =k_0$
        \item $\rho(\delta_i) \in d_i$
    \end{enumerate}
\end{definition}

    We assume in this subsection that for a.e. $k_0$, $\Mod(\Sigma)$ acts ergodically on  $\Rc_{k_0, K}(\Sigma,*_0)$.

    Given $k \in K,$ let $\Rc_{k,K}(T,*)$ denote the set of representations $\rho:\pi_1(T, *) \to K.$
    Let $x,y \in \pi_1(T, *)$ be two loops as depicted in figure \ref{fig:indpm}.
    We note that the map $\Rc_{k,K}(T ,*) \to K \times K$ given by $\rho \mapsto (\rho(x), \rho(y))$ is an isomorphism.

    Let $\alpha, \beta \subseteq \Sigma^+$ be as in the previous subsection. Similarly, let $\tilde \alpha , \tilde \beta \in \pi_1(\Sigma^+, *)$ be as in the previous subsection.

    \begin{lemma}\label{glutorerg}
    The group $ \overline{<\Mod(\Sigma) , T_{\alpha} , T_{\beta}>}$ acts ergodically on $\Rc^{\partial}_K (\Sigma^+,*).$
\end{lemma}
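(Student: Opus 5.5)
The plan is to run the proof of Lemma \ref{indtor} verbatim, replacing rational functions by $L^2$ functions and ``dense orbits'' by ergodicity. Suppose $f \in L^2(\Rc^{\partial}_K(\Sigma^+,*))$ is invariant under $\overline{<\Mod(\Sigma), T_{\alpha}, T_{\beta}>}$. By definition of the closure, this is the same as $f$ being invariant under $\Mod(\Sigma)$, $T_\alpha$ and $T_\beta$. The goal is to show that $f$ is a.e. constant.

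\textbf{Step 1 (reduction to the torus).} Consider the restriction map $\psi: \Rc^{\partial}_K(\Sigma^+,*) \to \Rc_{k,K}(T,*) \cong K\times K$, $\rho \mapsto (\rho(x),\rho(y))$. This map is $\Mod(\Sigma)$ invariant.
\begin{enumerate}
\item Using the arc $a_0$ and the thickening $\Sigma_1$ exactly as before, identify the fiber over $\rho$ equivariantly with $\Rc_{\rho(\tilde b_0),K}(\Sigma,*_0)$.
\item Disintegrate the symplectic (Goldman/Haar-induced) measure along $\psi$. The pushforward measure on $K\times K$ is absolutely continuous with respect to Haar measure, and the conditional measures are the natural measures on the relative varieties.
\item Since $\rho(\tilde b_0)$ is a word in $\rho(x),\rho(y)$ and $k$, the set of $\rho$ whose fiber has $\rho(\tilde b_0)$ lying in the null set of bad $k_0$ is a null set.
\end{enumerate}
By the ergodicity hypothesis, $f$ is then a.e. constant on a.e. fiber, so $f = \bar f\circ\psi$ for some $\bar f\in L^2(K\times K)$.

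\textbf{Step 2 (using the twists).} $T_\alpha$ invariance gives
\[
\bar f(\rho(x),\rho(y)) = \bar f(\rho(\tilde\alpha)\rho(x),\rho(y))
\]
for a.e. $\rho$. Fix a.e. $(a,b)$. The set $S_{a,b}=\{\rho(\tilde\alpha) : \rho(x)=a,\ \rho(y)=b\}$ generates $K$; this is the compact analogue of the fact quoted in Lemma \ref{indtor}, and in fact $S_{a,b}$ contains an open subset, because $\tilde\alpha$ passes through $\Sigma$ and $\Sigma$ has genus $\ge 1$. Consequently the function $\bar f(\cdot,b)$ is invariant under left translation by a measurable set of elements of positive Haar measure.

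The delicate point, and the main obstacle, is passing from ``invariant under $g$ for a.e. $\rho$ in the fiber'' to ``invariant under the group generated''. I would handle it as follows. Let $H_b=\{g\in K : \bar f(g\,\cdot,b)=\bar f(\cdot,b)\ \text{in}\ L^2\}$. This is a closed subgroup, since translation acts continuously on $L^2(K)$. A Fubini argument shows that $H_b$ contains a set of positive Haar measure. A closed subgroup of positive measure is open, and $K$ is connected, so $H_b=K$. Hence $\bar f(\cdot,b)$ is a.e. constant, i.e. $\bar f$ depends only on the second coordinate.

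\textbf{Step 3 (the second coordinate).} Apply the same argument with $T_\beta$, using $T_\beta\cdot y=\tilde\beta y$, to remove the dependence on the second coordinate. Therefore $\bar f$, and hence $f$, is a.e. constant, so the action is ergodic.
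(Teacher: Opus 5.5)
Your overall plan matches the paper's, which is to run Lemma \ref{indtor} with invariant $L^2$ functions in place of invariant rational functions. Step 1 is fine, but the measure-theoretic upgrade in Step 2 fails.

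\textbf{The open-set claim is false in genus one.} You say $S_{a,b}$ contains an open set ``because $\Sigma$ has genus $\ge 1$''. This fails when $\Sigma$ is a one-holed torus with no further boundary components. The lemma must cover that case: the paper stresses genus one, and the induction in Lemma \ref{framedense} starts from a one-holed torus.
\begin{itemize}
\item Since $\alpha$ crosses $b_0$ twice, $\tilde\alpha=u_1vu_2$ with $u_i\in\pi_1(T,*)$. Up to $\Mod(\Sigma)$ and powers of $\tilde b_0$, $v$ is a standard generator $c$ of $\pi_1(\Sigma_1,*)=\langle c,d\rangle$, where $\tilde b_0=[c,d]$.
\item With $(a,b)$ fixed, $\rho(u_i)$ and $g_0=\rho(\tilde b_0)$ are fixed, and $\rho|_{\Sigma_1}$ is a pair $(A,B)$ with $[A,B]=g_0$.
\item The possible values of $\rho(v)=A$ are those $A$ with $A^{-1}g_0$ conjugate to $A^{-1}$. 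This set has codimension at least $\operatorname{rank}K$: the solution set has dimension $\dim K$ for generic $g_0$, and the fibres of $(A,B)\mapsto A$ are centralizer cosets of dimension $\ge\operatorname{rank}K$. For $K=SU(2)$ it is the surface $\operatorname{tr}(A^{-1}g_0)=\operatorname{tr}(A)$ in $S^3$.
\end{itemize}
So $S_{a,b}$ is Haar-null and your Fubini step has nothing to act on. Knowing only that $S_{a,b}$ generates $K$ does not put a single element into $H_b$.

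\textbf{The $a$-dependence is not handled.} $T_\alpha$-invariance gives only the pointwise relation $\bar f(sa,b)=\bar f(a,b)$ for $s\in S_{a,b}$. But $S_{a,b}$ depends on $a$, through $\rho(u_i)$ and $g_0$. To put a fixed $g$ into $H_b$ you need $g\in S_{a,b}$ for almost every $a$, and that does not follow from $S_{a,b}$ being open. This matters, for example, for a one-holed torus with extra boundary components $\delta_i$, where $S_{a,b}$ is typically a proper open set.

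\textbf{Where your argument does work, and what is missing.} When $\Sigma$ has genus $\ge 2$ the argument goes through. Every element of $K$ is a commutator (Goto), so $\rho(v)$ ranges over all of $K$ with conditional law equivalent to Haar. Hence $S_{a,b}=K$, almost every $g$ lies in $H_b$, and your closed-subgroup argument finishes. For genus one you need an additional idea. One possibility:
\begin{itemize}
\item Compose $T_\alpha$ with a conjugate $T_{h(\alpha)}$, $h\in\Mod(\Sigma)$; both fix $y$. The multiplier of $x$ then depends on the values of $\rho$ on two different curves of $\Sigma$, and one must check that it sweeps out an open set.
\item Combine this with a local argument showing $\bar f(a_1,b)=\bar f(a_2,b)$ for almost every pair of nearby points, before invoking your closed-subgroup trick.
\end{itemize}
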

\begin{proof}
    This follows mutatis mutandis from the proof of Lemma \ref{indtor}.
\end{proof}
    
\subsection{$\pm 1$ curve}

 We first begin the definition of a $\pm 1$ curve (read as plus one and minus one curve) with respect to a handle attachment.

 \begin{definition}
     Let $\Sigma$ be a surface. Let $\Sigma^+$ be a surface formed by attaching a handle $S$ to $\Sigma$. Let the two boundary components of $S$ be $(\partial S)_1$ and $(\partial S)_2.$ 
     We allow the two boundary components of $S$ to be attached to either the same or different boundary components of $\Sigma$.

     A curve $\alpha \subseteq \Sigma^+$ is in $\pm 1$ position with respect to $S$ if:
     \begin{enumerate}
         \item The intersection  $\alpha \cap S$ consists of two arcs $a_1$ and $a_2.$
         \item The arc $a_1$ enters the strip by $(\partial S)_1$ and leaves by $(\partial S)_2$.
         \item The arc $a_2$ enters the strip by $(\partial S)_2$ and leaves by $(\partial S)_1.$
         \item $\Sigma \setminus \alpha$ is connected.
     \end{enumerate}
     
 \end{definition}
 An example of a $\pm 1$ curve is given in Figure \ref{fig:pmcurve}.
\begin{figure}
    \centering
    \includegraphics[width=0.8\linewidth]{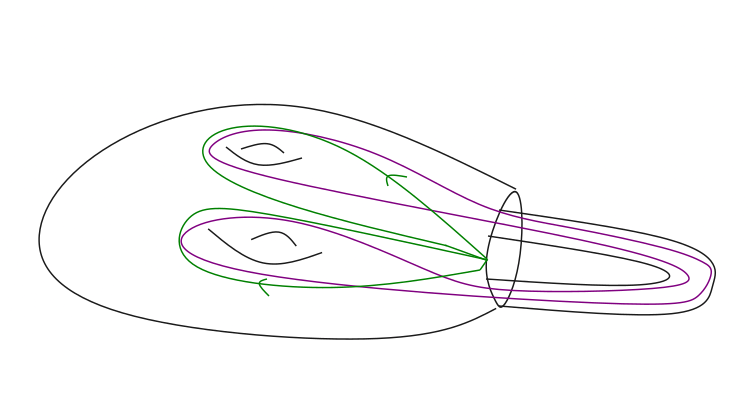}
    \caption{The figure depicts a violet curve that is in $\pm 1$ position with respect to the rectangular strip. The green curves are such that a neighbourhood of the two green curves along with the strip contains the red curve.}
    \label{fig:pmcurve}
\end{figure}
 
\begin{lemma}\label{pmunique}
    Let $\Sigma$ be a curve of genus $\ge 2 $. Let $\Sigma^+$ be an arc formed by attaching a trip $S$ to $\Sigma.$ Let $\alpha, \beta$ be two curves in $\pm 1$ position with respect to $S.$ Then there exists $g \in \Mod(\Sigma)$ such that $g \cdot \alpha = \beta.$
\end{lemma}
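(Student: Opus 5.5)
The plan is to reduce the statement to the classification of simple closed curves and arcs on $\Sigma$ up to the action of $\Mod(\Sigma)$, in the spirit of the change-of-coordinates principle. The strip $S$ is attached to $\Sigma$ along two arcs $e_1 \subseteq (\partial S)_1$ and $e_2 \subseteq (\partial S)_2$ lying in $\partial \Sigma$. A curve $\alpha$ in $\pm 1$ position meets $S$ in two arcs $a_1, a_2$, each crossing the strip once, in opposite directions. After an isotopy supported near $S$ we may take $a_1$ and $a_2$ to be parallel transverse arcs of the strip, so that $\alpha$ is completely determined by $\alpha \cap \Sigma$. This intersection consists of two disjoint properly embedded arcs $c_1, c_2$ in $\Sigma$. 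Following $\alpha$, one of them, say $c_1$, runs from the exit point of $a_1$ on $e_2$ back to the entry point of $a_2$ on $e_2$, and the other, $c_2$, runs from $e_1$ back to $e_1$. So $\alpha \cap \Sigma$ is a pair of disjoint arcs $(c_1, c_2)$, with $c_1$ having both endpoints on $e_2$ and $c_2$ having both endpoints on $e_1$. Their endpoints are fixed up to isotopy, being parallel copies of the strip's transverse arcs. It therefore suffices to find $g \in \Mod(\Sigma)$ carrying the pair of arcs for $\alpha$ to the pair for $\beta$, respecting endpoints; such a $g$ fixes $\partial\Sigma$ and extends by the identity over $S$.

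The key step is to cut $\Sigma$ along $c_1 \cup c_2$ and identify the homeomorphism type of the result. Here we use the hypothesis that $\Sigma^+ \setminus \alpha$ is connected (this is how I read condition (4); the statement says $\Sigma \setminus \alpha$). The strip part $S \setminus (a_1 \cup a_2)$ splits into three rectangles, and the middle rectangle connects a side of $c_1$ to a side of $c_2$. From this one checks that connectivity of $\Sigma^+ \setminus \alpha$ forces $\Sigma \setminus (c_1 \cup c_2)$ to have a controlled number of components, and each $c_i$ alone to be non-separating or separating in a prescribed way. An Euler characteristic count, together with a bookkeeping of how the boundary components of $\Sigma$ are regrouped into boundary components of the cut surface, then determines the cut surface up to homeomorphism. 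The data involved are the genus, the number of boundary components, and which original boundary components $\delta_i$ lie on which piece. The conclusion should be that this type is the same for $\alpha$ and for $\beta$. The classification of surfaces then gives a homeomorphism between the two cut surfaces matching the boundary pieces coming from $c_1, c_2$ and from $\partial\Sigma$ correctly. Regluing yields $g$, and since it fixes $\partial \Sigma$ it lies in $\Mod(\Sigma)$.

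I expect the main obstacle to be the case analysis in that middle step. Depending on whether $e_1, e_2$ lie on the same boundary component $b$ of $\Sigma$ or on different ones, and on whether an individual $c_i$ separates $\Sigma$, there are several a priori configurations. For the argument to work, the $\pm 1$ hypothesis together with connectivity has to rule out all but one of them. A further subtlety is that the other boundary components $\delta_i$ might be distributed differently between the pieces. If that distribution were not forced, the lemma would be false as stated. My first attempt would be to show that the cut surface is connected, which would make the topological type unique by an Euler characteristic computation. If it is not connected, I would fall back to the case where $\Sigma$ has the single relevant boundary component $b$, which is the configuration used in Lemma \ref{indsamebound}. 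The genus $\ge 2$ hypothesis is used only to guarantee that the relevant homeomorphisms exist and are not obstructed by low-genus exceptions.
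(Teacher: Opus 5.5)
Your strategy of cutting along $c_1\cup c_2$ and applying the classification of surfaces is the same change-of-coordinates argument as the paper's one-line proof: $\Mod(\Sigma)$ acts transitively on pairs of arcs that do not separate $\Sigma$. There is, however, a genuine gap, and it comes from your decision to read condition (4) as ``$\Sigma^+\setminus\alpha$ is connected''. Under that reading the step you defer (``connectivity forces the cut surface type'') is false, and so is the lemma.

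Call the side of $c_i$ that faces the short subinterval of $e_1\cup e_2$ between its endpoints its inner side. The middle rectangle of $S\setminus(a_1\cup a_2)$ joins the two inner sides, and the two outer rectangles join the outer sides. When $e_1,e_2$ lie on the same boundary circle, both outer sides also lie on a single boundary component of $X=\Sigma\setminus(c_1\cup c_2)$. Hence $\Sigma^+\setminus\alpha$ is connected as soon as \emph{one} inner side lies in the same component of $X$ as the outer sides, and $X$ itself need not be connected.

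Here is a concrete counterexample. Let $\Sigma$ have genus $2$ and one boundary component. Let $c_2$ be a separating arc cutting off a one-holed torus whose boundary contains the short interval of $e_1$, and let $c_1$ be a nonseparating arc in the complementary one-holed torus. Then:
\begin{itemize}
\item $\Sigma^+\setminus\alpha$ is connected;
\item both $c_i$ are essential, so $\alpha$ has no bigon with the co-core $\kappa$ of $S$ and meets it minimally;
\item $\Sigma^+\setminus(\alpha\cup\kappa)\cong X$ is the disjoint union of a one-holed torus and an annulus.
\end{itemize}
Choosing instead $c_1,c_2$ with $X$ a pair of pants gives a second such curve, whose cut surface is connected. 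Every element of $\Mod(\Sigma)$ fixes $\kappa$, and minimal position is unique up to isotopy. So the homeomorphism type of $\Sigma^+\setminus(\alpha\cup\kappa)$ is an orbit invariant, and these two curves lie in different orbits. Your fallback to the single-boundary case does not rescue the argument, because this example already lives there.

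The repair is to take (4) literally: $\Sigma\setminus\alpha=\Sigma\setminus(c_1\cup c_2)$ is connected. That is, the two arcs form a non-separating pair, which is exactly what the paper's proof uses. Your case analysis then disappears.
\begin{itemize}
\item $X$ is connected and $\chi(X)=\chi(\Sigma)+2$.
\item The boundary of $X$ consists of the $\delta_j$, together with $c_1^{\mathrm{in}}$ and $c_2^{\mathrm{in}}$, each closed up by its short interval.
\item In addition, there is either one component carrying both outer sides (when $e_1,e_2$ lie on the same circle), or two components each formed by $c_i^{\mathrm{out}}$ and the rest of its boundary circle (when they lie on different circles).
\end{itemize}
In both cases $X$ has genus $g-2$. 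The classification of surfaces then gives an orientation-preserving homeomorphism $X_\alpha\to X_\beta$ that respects this labelled boundary data and is the identity on the pieces of $\partial\Sigma$. Regluing gives the required element of $\Mod(\Sigma)$, extended by the identity over $S$.

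The literal reading also automatically excludes degenerate curves in which some $c_i$ cobounds a disk with its short interval, since that disk would split off. Your reading would admit these as well, and they are isotopic into $\Sigma$, so they cannot be in the orbit of curves crossing $S$ essentially.
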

\begin{proof}
    This follows from the fact that the mapping class group $\Mod(\Sigma)$ acts transitively on pairs of arcs that do not separate $\Sigma$, this is in turn a consequence of the change of coordinates principle.
\end{proof}
\begin{lemma}\label{disk3h1b}
    Let $\Sigma$ be a surface. Let $\Sigma^+$ be an arc formed by attaching a handle $S$ to the same boundary component. Then $\alpha \cup S$ is contained in a disk with three holes.
\end{lemma}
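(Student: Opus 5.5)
The statement is that if $\Sigma^+$ is obtained from $\Sigma$ by attaching a strip $S$ with both ends on the same boundary component $b$, and $\alpha$ is in $\pm 1$ position with respect to $S$, then a regular neighbourhood of $\alpha \cup S$ is a disk with three holes. I would prove this by building the neighbourhood piece by piece and computing its Euler characteristic and number of boundary components.

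First, I would normalize the picture. By Lemma \ref{pmunique} (or the change of coordinates principle directly), it suffices to treat one model $\pm 1$ curve, because a mapping class of $\Sigma$ carries one $\pm 1$ curve to any other and fixes $S$. In the model I take $\alpha$ as follows:
\begin{enumerate}
\item $\alpha\cap\Sigma$ consists of two disjoint arcs $e_1, e_2$ with endpoints on $b$.
\item $e_1$ joins the exit point of $a_1$ to the entry point of $a_2$, and $e_2$ joins the exit point of $a_2$ to the entry point of $a_1$.
\end{enumerate}
This is the configuration in Figure \ref{fig:pmcurve}: the two green arcs together with the strip recover the neighbourhood of $\alpha \cup S$.

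Next, I would compute the topology. Let $N$ be a regular neighbourhood of $\alpha\cup S$. Since $\alpha\cap S$ consists of arcs inside $S$, $N$ equals a regular neighbourhood of
$$S \cup N(b) \cup e_1 \cup e_2 ,$$
where $N(b)$ is a collar of the boundary component $b$.
\begin{enumerate}
\item The collar $N(b)$ together with $S$ is the pair of pants $P$ from earlier. Its Euler characteristic is $\chi = 0 - 1 = -1$, and it has $3$ boundary components.
\item Attaching the two arcs $e_1, e_2$ adds two more bands, each lowering $\chi$ by $1$. This gives $\chi(N) = -3$.
\end{enumerate}
A disk with three holes (a sphere with four boundary components) also has $\chi = 2 - 4 = -2$. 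So the count has to be rechecked. Since $\alpha$ is part of $N$, I would instead build $N$ directly as a neighbourhood of $\alpha \cup S$. That means $S$ (a disk) plus the two arcs $e_1, e_2$ as bands attached to the ends of $S$ along $b$, without the whole collar $N(b)$. Then $\chi(N) = 1 - 2 = -1$.

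I expect the main obstacle to be exactly this bookkeeping: deciding what "$\alpha \cup S$ is contained in" means, and choosing the right minimal neighbourhood. The statement only asks for containment in some disk with three holes, so I would take the minimal neighbourhood described above and then enlarge it.

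For the minimal version ($S$ plus two bands), with the bands attached orientably, $\chi = -1$. Tracing the boundary in the model, using the $\pm$ crossing pattern, I expect $2$ boundary components (a one-holed torus) or $3$ (a pair of pants). Genus $0$ should come from the crossing pattern together with the fact that $\alpha$ is non-separating in $\Sigma^+$ but passes through $S$ in opposite directions. Tracing boundary components in the figure would settle this. Adding a collar of the relevant arc of $b$ adds one more boundary component and lowers $\chi$ by one, giving $\chi = -2$ with $4$ boundary components, that is, a disk with three holes containing $\alpha\cup S$.

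If the boundary trace instead gives a one-holed torus, I would revisit the choice of model, since then the claim would fail for that configuration.
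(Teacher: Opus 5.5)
Your construction is the same as the paper's. The paper thickens $S\cup l_1\cup l_2\cup l$, where $l_1,l_2$ are your $e_1,e_2$ and $l$ is an extra arc in $\Sigma$, disjoint from them, joining the two attaching ends of $S$; this is your ``collar of the relevant arc of $b$''. \textbf{But the proposal never actually proves the lemma.} The one nontrivial point is that the thickening is planar. You leave it as ``I expect \dots\ tracing would settle this'' and even allow that the claim might fail. The reason you suggest, non-separation of $\alpha$, plays no role. Similarly, ``adding a collar \dots\ adds one more boundary component'' holds only if both feet of the new band lie on the \emph{same} boundary component of the previous surface. If they lie on different components, the band raises the genus instead, and you do not check which case occurs. (Your first count, $\chi=-3$, was correct. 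It describes the larger surface $N(b)\cup S\cup e_1\cup e_2$, a five-holed sphere, which is not a neighbourhood of $\alpha\cup S$.)

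The missing step is a chord-diagram check, and your own model already contains what it needs.
\begin{itemize}
\item Because $a_1$ and $a_2$ cross $S$ in opposite directions, $e_1$ has both feet on $(\partial S)_2$ and $e_2$ has both feet on $(\partial S)_1$. So on the circle $\partial S$ the two pairs of feet do not interleave.
\item The bands are untwisted, since everything lies in the orientable surface $\Sigma^+$. Hence $S\cup e_1\cup e_2$ thickens to a surface with $\chi=-1$ and three boundary components, i.e.\ a pair of pants. In fact $\alpha$ is isotopic to the boundary component of these pants that contains both long sides of $S$.
\item If the two crossings went in the same direction, the feet would interleave and you would get a one-holed torus. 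The $\pm1$ condition is exactly what excludes the case you were worried about.
\item Now take $l$ in a thin collar of $b$, running from a point of $(\partial S)_1$ outside the feet of $e_2$ to a point of $(\partial S)_2$ outside the feet of $e_1$. It is disjoint from $e_1,e_2$, and both of its feet lie on that same boundary component. So the new band splits that component, giving $\chi=-2$, four boundary components and genus $0$: a disk with three holes.
\end{itemize}
The choice of endpoints for $l$ matters. An endpoint placed between the feet of $e_1$ or $e_2$ can produce a torus with two holes; the paper leaves this choice implicit.

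Finally, the normalization via Lemma \ref{pmunique} is unnecessary, because the foot pattern above is forced for every $\pm1$ curve. That lemma also assumes genus $\ge 2$, which this statement does not.
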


\begin{proof}
    Let $l_1, l_2$ denote the two disjoint arcs such that $\alpha \cap \Sigma = l_1 \cup l_2.$
    Let $l$ be an arc disjoint from $l_1 , l_2$ joining the two boundary components of $\partial S$
     Then $\alpha \cup S$ is contained in $S \cup l_1 \cup l_2 \cup l$, and when we thicken the latter we obtain a disk with three holes.
\end{proof}
\begin{lemma}\label{disk3h2b}
    Let $\Sigma$ be a surface. Let $\Sigma^+$ be an arc formed by attaching a handle $S$ to two different boundary components. Then $\alpha \cup S$ is contained in a disk with three holes. 
\end{lemma}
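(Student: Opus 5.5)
Following the proof of Lemma \ref{disk3h1b}, I will write $\alpha \cap \Sigma = l_1 \cup l_2$. By the definition of $\pm 1$ position, $\alpha \cap S$ consists of the two arcs $a_1, a_2$ crossing the strip in opposite directions. So $\alpha$ is obtained by concatenating $a_1, l_1, a_2, l_2$ in cyclic order. Each $l_i$ is an arc in $\Sigma$ joining an endpoint on $(\partial S)_2$ to one on $(\partial S)_1$ (or vice versa). Here $(\partial S)_1$ lies on a boundary component $c_1$ of $\Sigma$ and $(\partial S)_2$ on a different component $c_2$. The plan is to find a small connected graph $\Theta$ containing $\alpha \cup S$ and compute the Euler characteristic and number of boundary components of its regular neighbourhood. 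A disk with three holes has $\chi = -2$, genus $0$, and $4$ boundary components, which gives the target numbers.

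First I will take $N$ to be a regular neighbourhood of $c_1 \cup c_2 \cup S \cup l_1 \cup l_2$ in $\Sigma^+$. This replaces the single auxiliary arc $l$ of the previous lemma: the two boundary circles $c_1, c_2$ now play the role of the boundary component $b$ there. The neighbourhood $N$ deformation retracts onto a graph built from two circles $c_1, c_2$ and three arcs joining them ($S$, $l_1$, $l_2$). That graph has Euler characteristic $0 - 3 = -3$, which is one too many holes.

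Second, I will shrink $N$. I will not keep all of $c_1$ and $c_2$, only small collar pieces near the endpoints, glued in through the strip. Concretely, $l_1$ and $l_2$ together with the core of $S$ form a theta-like graph: the core arc of $S$ and the arcs $a_1, a_2$ are parallel inside the strip. So $\alpha \cup S$ retracts onto $\alpha$ with one extra arc (the core of $S$) attached. Its neighbourhood has $\chi = -1$, which gives a pair of pants or a one-holed torus. Since $\Sigma \setminus \alpha$ is connected and $\alpha$ crosses $S$ in opposite directions, I expect the orientation and sign count to make the neighbourhood planar. If it instead has genus, I will add the one arc of $c_i$ between the feet of $l_1, l_2$, giving $\chi = -2$. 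In either case I then fill in disk components of the complement, since this does not increase complexity.

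The main obstacle is the boundary count. I need to check that the neighbourhood of $S \cup l_1 \cup l_2 \cup$ (collar arcs) has genus $0$ rather than genus $1$ with two boundary components. I will handle this exactly as in Lemma \ref{disk3h1b}: draw the ribbon graph with the $\pm 1$ crossing convention and trace its boundary cycles. The opposite crossing directions of $a_1$ and $a_2$ are precisely what make the ribbon graph have $4$ boundary cycles, so the thickened surface is a disk with three holes containing $\alpha \cup S$.
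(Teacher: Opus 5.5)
There is a genuine gap, and it begins with a misreading of the $\pm 1$ condition. Since $a_1$ leaves the strip through $(\partial S)_2$ and $a_2$ enters it through $(\partial S)_2$, the arc $l_1$ of $\alpha\cap\Sigma$ following $a_1$ has \emph{both} endpoints on $(\partial S)_2\subset c_2$. Likewise $l_2$ has both endpoints on $(\partial S)_1\subset c_1$. Your description, with each $l_i$ running from one end of the strip to the other, is that of a curve crossing $S$ twice in the \emph{same} direction. So $S\cup l_1\cup l_2$ is not a theta graph but a handcuff graph (two loops joined by the core of $S$), and the rest of your analysis is carried out on the wrong configuration.

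In your theta picture the genus really is undetermined: the two cyclic orders at the second vertex give a pair of pants and a one-holed torus respectively. You never resolve this; you only say you ``expect'' planarity and defer the boundary-cycle count. Your fallback cannot rescue it either. Attaching more bands, or filling disc components of the complement, never lowers genus. If a regular neighbourhood of $\alpha\cup S$ had genus, no planar subsurface could contain $\alpha\cup S$ at all. As written, the decisive step is missing.

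Once the endpoints are corrected, the step you flagged as the main obstacle disappears, and this is where the opposite crossing directions actually enter. A regular neighbourhood of $(\partial S)_1\cup l_2$ in the orientable surface $\Sigma$ is a disc with one untwisted band, i.e.\ an annulus, and the same holds for $(\partial S)_2\cup l_1$. These annuli are disjoint and $S$ is a band joining them. Hence $P=N(S\cup l_1\cup l_2)$ has $\chi=-1$ and three boundary components, so it is a pair of pants containing $\alpha\cup S$, with no sign bookkeeping required.

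The paper's argument (mutatis mutandis from Lemma~\ref{disk3h1b}) thickens $S\cup l_1\cup l_2\cup l$ for an auxiliary arc $l$ disjoint from $l_1,l_2$. To obtain the disc with three holes this way, attach one more band along an arc outside $P$ whose two feet lie on the \emph{same} boundary component of $P$. One such arc is a push-off of the boundary arc $c_1\setminus(\partial S)_1$; its feet both lie on the component of $\partial P$ containing the two long sides of $S$. A band with both feet on one boundary circle of a planar surface keeps genus $0$ and adds a boundary component, giving $\chi=-2$ with four boundary components. If the feet were on different components of $\partial P$, you would instead get a two-holed torus, so that choice is the one point that must be checked.
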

\begin{proof}
    This follows mutatis mutandis from the proof of the previous lemma.
\end{proof}
\begin{lemma}\label{pmtwist}
    Let $\Sigma,S, \Sigma^+$ be as above, where $S$ is allowed to be attached to one boundary component or two. Let $\alpha$ be a curve in $\pm 1$ position. Let $a$ denote an arc in $S$ going across the strip. We note that by lemma \ref{disk3h1b} the strip $S$ and $\alpha$ are contained inside a thrice punctured disk which we will denote $D_3$. We depict this disk in Figure \ref{fig:3holedisk}. Then $T_{\alpha} \cdot a = [z_1,z_2]a$ where $z_1,z_2$ are as depicted in the figure. 
\end{lemma}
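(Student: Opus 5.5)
Everything happens inside the thrice-holed disk $D_3$ supplied by Lemma \ref{disk3h1b} (or Lemma \ref{disk3h2b}, if $S$ is attached to two different boundary components). So I would first reduce the statement to a computation in the fundamental groupoid of $D_3$ relative to the endpoints of $a$. Since $T_\alpha$ is supported in a regular neighbourhood of $\alpha$, and $\alpha\subseteq D_3$, the image $T_\alpha\cdot a$ depends only on $D_3$. I would fix a base point at the starting endpoint of $a$ and take $\pi_1(D_3)$ to be the free group on the loops $z_1,z_2$ of the figure, each going once around one of the two holes of $D_3$ that are separated by the strip. Roughly, $z_1$ and $z_2$ are the two green curves of Figure \ref{fig:pmcurve}, tethered to the base point, and a neighbourhood of them together with $S$ contains $\alpha$. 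The arc $a$ crosses the strip once.

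Next I would write $\alpha$ as a word in this groupoid. By the definition of $\pm1$ position, $\alpha$ traverses the strip once in each direction, via the arcs $a_1$ and $a_2$. Between these crossings it runs along $l_1$ and $l_2$, which go around the holes on either side. Reading off the figure, with an appropriate basing, $\alpha$ is freely homotopic to the commutator $z_1 z_2 z_1^{-1} z_2^{-1}$: one crossing picks up $z_1$, going back picks up $z_2$, and the reverse traversals give the inverses. Now I would apply the standard formula for a twist acting on an arc $a$ that meets $\alpha$ once transversely: $T_\alpha\cdot a = \tilde\alpha\, a$, where $\tilde\alpha$ is $\alpha$ based at the intersection point and transported to the start of $a$. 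This is the same mechanism the paper already used in Lemma \ref{indtor}, where $T_\alpha\cdot x=\tilde\alpha x$.

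The main obstacle is that $a$ may meet $\alpha$ twice, once in each of $a_1$ and $a_2$, rather than once. To handle this, I would choose the transverse arc $a$ close to one end of the strip and isotope $\alpha$ within $D_3$. Both intersection points then contribute, and the composite correction term is the product of the two based loops. I would check that this product equals $z_1z_2z_1^{-1}z_2^{-1}$ by tracking orientations: the two crossings have opposite orientation, so one factor is $z_1z_2$-like and the other $z_1^{-1}z_2^{-1}$-like, conjugated appropriately.

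Alternatively, I would verify the identity directly by a picture computation in $D_3$. Cut $D_3$ along $a$ to get an annulus-like region, and draw the image of $a$ under the twist, which winds once around $\alpha$. Then read off the word by recording crossings with arcs dual to $z_1$ and $z_2$. Up to the conventions fixed in the figure, namely the direction of the twist and the choice of tethers, the result is $[z_1,z_2]a$.
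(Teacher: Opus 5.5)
\textbf{The step that fails is identifying $\alpha$ with the commutator $z_1z_2z_1^{-1}z_2^{-1}$.} A commutator is null-homologous, but $\alpha$ is not. Inside the planar surface $D_3$, $\alpha$ cobounds a pair of pants with the two holes encircled by $z_1,z_2$ (your green curves), so $[\alpha]=[z_1]+[z_2]\neq 0$ in $H_1(D_3)$.

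The error is in ``the reverse traversals give the inverses.'' The two passages of $\alpha$ through the strip are paths in the contractible $S$ and contribute nothing to the word. Each of $z_1,z_2$ is traversed once, in the same sense. So $\alpha$ is represented by $z_1z_2$ (or $z_2z_1$, depending on basing), not by $[z_1,z_2]$.

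Relatedly, the double intersection with $a$ is not an obstacle you can isotope away. Each of $a_1,a_2$ runs from $(\partial S)_1$ to $(\partial S)_2$, so each must cross the co-core $a$. These two opposite-sign crossings are exactly where the commutator comes from.

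Your third paragraph, with factors ``$z_1z_2$-like'' and ``$z_1^{-1}z_2^{-1}$-like,'' contradicts your second. If $\alpha$ really were $[z_1,z_2]$, two opposite-sign crossings would give $[z_1,z_2]\,h[z_1,z_2]^{-1}h^{-1}$ for some $h$, not $[z_1,z_2]$. The phrase ``conjugated appropriately'' hides exactly the point that needs checking. Also, $\pi_1(D_3)$ is free of rank three, not free on $z_1,z_2$; the third generator simply does not appear in the answer.

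\textbf{The repair.} Base at the initial endpoint of $a$, and let $x_1,x_2$ be the crossings of $a$ with $a_1,a_2$. All connecting paths lie in $S$. Hence the copy of $\alpha$ based at $x_1$ and transported back along $a$ is $z_1z_2$, and the copy based at $x_2$ is $z_2z_1$. Since the two crossings have opposite signs,
\[
T_\alpha\cdot a=(z_1z_2)(z_2z_1)^{-1}a=[z_1,z_2]\,a .
\]
The other twist direction gives $[z_2^{-1},z_1^{-1}]\,a$, which works equally well in Lemma \ref{indpm}.

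With this correction, your groupoid argument becomes an explicit algebraic version of the paper's proof. The paper simply draws $T_\alpha\cdot a$ inside $D_3$ and reads off the word. That is essentially your final ``alternative'' paragraph, which as written is only an assertion.
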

\begin{proof}
 The arc $a$ joins the outer boundary of the disk with one of the inner boundary components. The element $T_{\alpha}$ can then be understood as an element of $\Mod(D_3).$ The new arc $T_{\alpha} \cdot a$ is then as depicted in Figure \ref{fig:3holedisk}. One can see that $T_{\alpha} \cdot a$  is homotopic to $[z_1,z_2]a$.
\end{proof}
\begin{figure}
    \centering
    \includegraphics[width=\linewidth]{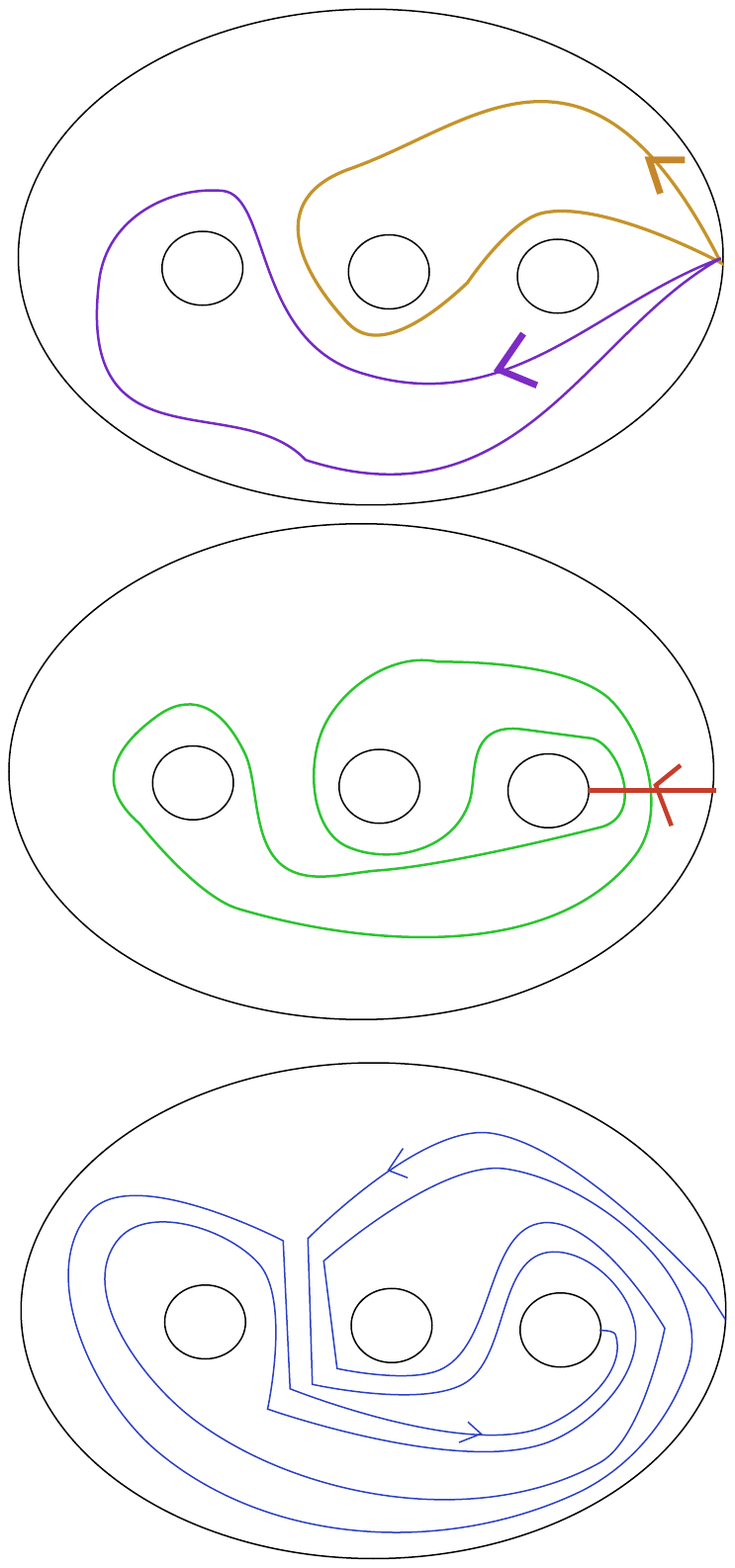}
    \caption{On the top, we  have a disk with 3 holes. In it is a yellow loop $z_1$ and a purple loop $z_2$. In the center we have the same disk with a green loop $\alpha$ and a red arc $a$. On the bottom we have the same disk with $T_{\alpha} \cdot a$ depicted in blue. }
    \label{fig:3holedisk}
\end{figure}
\begin{lemma}\label{indpm}
Let $\Sigma$ be a surface of genus $\ge 4$ with a single boundary component $b_0$. Let $T$ be a two holed torus. We glue $T$ to $\Sigma$ along  one boundary component $b_0$, to obtain a surface $\Sigma^+$. The remaining boundary component of $\Sigma^+$  will be denoted $b$.
Let $\alpha, \beta$ be two curves in $\Sigma^+$ that satisfy:
\begin{enumerate}
    \item $\alpha $ intersects $x$ transversely at one point and is disjoint from $y$.
    \item Let $\Sigma_1$ denote a thickening of $\Sigma \cup \alpha$, it has two boundary components and furthermore $\Sigma^+$ is formed by gluing a strip $S$ to $\Sigma_1$ (the boundaries of $S$ are glued on to distinct different boundary components of $\Sigma_1$). We assume $\beta$ is in $\pm 1$ position with respect to $(\Sigma, S)$. We note that  $\beta$ intersects $y$ twice with opposite orientation.
\end{enumerate}

The figure \ref{fig:indpm} depicts the  above situation.

Given $g \in G$, we let $\Rc_g(\Sigma^+, *)$ be as in subsection \ref{gluetor}.

Then for all $g\in G$, $<\Mod(\Sigma), T_{\alpha}, T_{\beta}>$ acts Zariski densely on $\Rc_g(\Sigma^+, *).$
\end{lemma}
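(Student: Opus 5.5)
Following the template of Lemma \ref{indtor}, I would reduce to a statement about rational functions on $\Rc_g(T,*)\cong G\times G$. The difference is that $\beta$ is no longer a meridian intersecting $y$ once; it is a $\pm 1$ curve meeting $y$ twice.

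\textbf{Step 1 (factoring through $T$).} Let $\psi:\Rc_g(\Sigma^+,*)\to \Rc_g(T,*)\cong G\times G$, $\rho\mapsto(\rho(x),\rho(y))$, be the restriction map. It is dominant and $\Mod(\Sigma)$-invariant. Its fiber over $\rho$ is $\Rc_{\rho(\tilde b_0)}(\Sigma,*_0)$, and for general $\rho$ the element $\rho(\tilde b_0)$ is general in $G$. Since $\Sigma$ has genus $\ge 4$ and a single boundary component, I would use the fact, either established at the base of the induction (the \cite{PX}-type input of Section 4) or assumed as in subsection \ref{gluetor}, that $\Mod(\Sigma)$ acts with dense orbits on the general relative variety $\Rc_{g_0}(\Sigma,*_0)$. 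Then any invariant $f\in\CC(\Rc_g(\Sigma^+,*))$ has the form $\bar f\circ\psi$ with $\bar f\in\CC(G\times G)$.

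\textbf{Step 2 (using $T_\alpha$).} Exactly as in Lemma \ref{indtor}, $T_\alpha\cdot x=\tilde\alpha x$ and $T_\alpha$ fixes $y$. For general $(a,b)$, the elements $\rho(\tilde\alpha)$ with $\rho(x)=a$, $\rho(y)=b$ generate $G$, because $\tilde\alpha$ runs through $\Sigma$ of genus $\ge 4$, which gives plenty of freedom. Hence $\bar f(a,b)=\bar f(Ga,b)$, so $\bar f$ depends only on the second coordinate: $\bar f(a,b)=h(b)$.

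\textbf{Step 3 (using $T_\beta$).} This is the main obstacle. Since $\beta$ is in $\pm1$ position with respect to the strip $S$ whose core arc is the part of $y$ crossing it, Lemma \ref{pmtwist} gives $T_\beta\cdot y=[\tilde z_1,\tilde z_2]\,y$. Here $\tilde z_1,\tilde z_2$ are based versions of the loops $z_1,z_2$ in the three-holed disk $D_3$ of Lemma \ref{disk3h2b}, and they lie in $\Sigma_1$. So $h(b)=h(\rho([\tilde z_1,\tilde z_2])\,b)$. I then need that, for general fixed $b$ (and fixed $\rho(x)$), the set of values $\rho([\tilde z_1,\tilde z_2])$ over the fiber contains a nonempty Zariski open subset of $G$. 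Two inputs would give this. First, by Lemma \ref{pmunique} I may move $\beta$ by $\Mod(\Sigma)$, so $z_1,z_2$ can be taken to be essentially free independent generators coming from distinct handles of $\Sigma$, where genus $\ge 4$ leaves room. Second, the commutator map $G\times G\to G$ is dominant for semisimple $G$.

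Combining, $h$ is invariant under left multiplication by a Zariski dense set of elements. The subgroup they generate is all of $G$ (the product of a dense open set with itself is $G$), so $h$ is constant. Hence $f$ is constant, and the density of orbits follows from Theorem 2.2 of \cite{KPS}.

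The delicate point is the independence claim in Step 3. I must check that fixing $\rho(x)$, $\rho(y)$ and the boundary value $g$ does not constrain $(\rho(\tilde z_1),\rho(\tilde z_2))$ beyond an open condition. I would argue this by choosing a presentation of $\pi_1(\Sigma^+)$ in which $\tilde z_1,\tilde z_2$ are free generators on handles of $\Sigma$ disjoint from the relation-absorbing handle.
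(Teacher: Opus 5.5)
Your proposal is correct and follows the paper's proof essentially step for step. You factor $f$ through $\psi$ using density on the fibers, use $T_\alpha$ to remove the dependence on $\rho(x)$, then use Lemma \ref{pmtwist} to get $T_\beta\cdot y=[z_1,z_2]\,y$, and conclude from the freedom of $(\rho(z_1),\rho(z_2),\rho(y))$ together with $[G,G]=G$. The only difference is how that freedom is justified: the paper builds $\rho$ on a thickening of $S\cup\beta\cup y$ and extends it over the high-genus complement, while you normalize $\beta$ and use an explicit presentation. In your route, Lemma \ref{pmunique} actually moves $\beta$ by $\Mod(\Sigma_1)$ rather than $\Mod(\Sigma)$. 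This is harmless, since such mapping classes fix $y$ and so the freedom of the triple transfers.
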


\begin{figure}
    \centering
    \includegraphics[width=\linewidth]{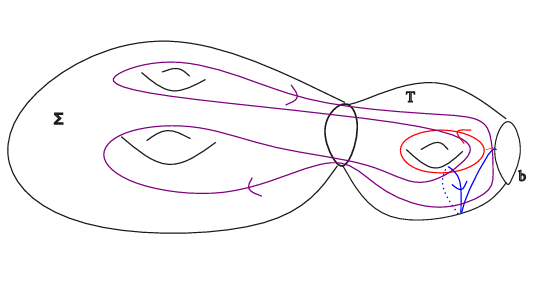}
    \caption{This figure depicts the situation of Lemma \ref{indpm}, the red loop is $x$, the blue loop is $y$, and the purple curve is $\beta.$}
    \label{fig:indpm}
\end{figure}
\begin{proof}
    This is similar to the proof of Lemma \ref{indtor}. Let $f$ be a $<\Mod(\Sigma), T_{\alpha}, T_{\beta}>$ invariant rational function on $\Rc_g(\Sigma^+ , *).$
    Let $\psi: \Rc(\Sigma^+ , *) \to \Rc_g(T, *) \cong  G \times G$ denote the restriction map. Now just as in the proof of Lemma \ref{indtor}, $f$ must factor through $\psi$ since $f$ is $\Mod(\Sigma)$ invariant. 

    Again just as in the proof of Lemma \ref{indtor}, we can use the fact that $f$ is $T_{\alpha}$ invariant to conclude that it in fact factors through the map  $\phi: \Rc(\Sigma^+ , *) \to G $ given by $\phi(\rho) = \rho(y).$ 
    
    Now let $\Sigma_1$ be a thickening of $  \Sigma \cup \alpha$. It is a surface of the same genus as $\Sigma$ with two boundary components. The surface $\Sigma^+$ is formed by gluing a strip $S$ to $\Sigma_1$. The loop $y$ intersects this strip once in an arc perpendicular to the strip.
    The curve $\beta$ is in $\pm 1$ position with respect to the surface $\Sigma_1$ and strip $S.$ Thus by Lemma \ref{disk3h1b}, the strip $S$ and the loop $\beta$ are contained in a disk with three punctures $D_3$. Let $z_1, z_2$ be as in Lemma \ref{pmtwist}. We then have that $T_{\beta}\cdot y = [z_1, z_2]y.$
     
   But for general  $g_1, g_2,g_3 \in G$ we can find a representation $\rho$ such that $\rho(z_1) =g_1, \rho(z_2) =g_2$ and $\rho(y) = g_3.$ This can be seen as follows: It is possible to define such a representation $\rho$ on the subsurface $\Sigma_2$ which is a thickening of $S \cup \beta \cup y$. We then extend this representation to $\Sigma^+,$ since the complementary subsurface has high enough genus, this can always be done.
   
   This then implies that $\bar f$ is invariant under left multiplication by $[G,G] = G$, which in turn implies that $\bar f$ and $f$ are constant maps.
    
\end{proof}

\begin{lemma}\label{indpmergodic}
Let $\Sigma$ be a surface of genus $\ge 2$ with a single boundary component $b_0$. Let $T$ be a two holed torus. We glue $T$ to $\Sigma$ along $b_0$ to obtain a surface $\Sigma^+$. The remaining boundary component of $\Sigma^+$  will be denoted $b$.
Let $\alpha, \beta$ be two curves in $\Sigma^+$ that are as in the statement of Lemma \ref{indpm}.
Given $k \in K$, we let $\Rc_K(\Sigma^+, *)$ be as in subsection \ref{gluetorerg}.

Then for all $k \in K$,
 $<\Mod(\Sigma), T_{\alpha}, T_{\beta}>$ acts ergodically on $\Rc_k(\Sigma^+, *).$

\end{lemma}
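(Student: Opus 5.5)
The plan is to transcribe the proof of Lemma \ref{indpm} into the measure-theoretic setting, replacing invariant rational functions by invariant $L^2$ functions. Throughout we use that the action is ergodic if and only if every invariant $L^2$ function is a.e. constant. The argument has three steps.

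\textbf{Step 1: reduce to functions of $(\rho(x),\rho(y))$.} Let $f \in L^2(\Rc_k(\Sigma^+,*))$ be invariant under $<\Mod(\Sigma), T_{\alpha}, T_{\beta}>$. Consider the restriction map $\psi:\Rc_k(\Sigma^+,*) \to \Rc_{k,K}(T,*) \cong K\times K$. We disintegrate the symplectic (Haar-type) measure along $\psi$. As in the proof of Lemma \ref{indtor}, the fibre over $\rho$ is identified equivariantly with $\Rc_{\rho(\tilde b_0),K}(\Sigma,*_0)$. For a.e. $\rho$ the element $\rho(\tilde b_0)$ is a generic element of $K$, and the pushforward measure on $K$ is absolutely continuous. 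By the standing ergodicity hypothesis on $\Mod(\Sigma)$, as in Subsection \ref{gluetorerg}, $f$ is a.e. constant on a.e. fibre. Hence $f=\bar f\circ\psi$ a.e. for some measurable $\bar f$ on $K\times K$.

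\textbf{Step 2: remove the dependence on $\rho(x)$.} We have $T_\alpha\cdot x=\tilde\alpha x$, and $T_\alpha$ does not change $\rho(y)$. So invariance gives $\bar f(\rho(\tilde\alpha)\rho(x),\rho(y))=\bar f(\rho(x),\rho(y))$ a.e. Conditioned on $(\rho(x),\rho(y))$, the distribution of $\rho(\tilde\alpha)$ has a density that is positive on an open set of $K$. An $L^2$ function invariant under translation by a set generating a dense subgroup of the compact group $K$ is constant. To make this rigorous, I would average: invariance under a positive-density family of translations means $\bar f(\cdot,b)$ is fixed by convolution with an absolutely continuous measure $\mu$ whose support generates $K$. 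Then, via Fourier analysis on $K$ (Peter–Weyl), only the trivial representation survives. Thus $\bar f$ depends only on $\rho(y)$.

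\textbf{Step 3: remove the dependence on $\rho(y)$.} By Lemma \ref{pmtwist}, $T_\beta\cdot y=[z_1,z_2]y$. By the extension argument of Lemma \ref{indpm}, for generic $(g_1,g_2,g_3)$ there is a representation with $\rho(z_i)=g_i$ and $\rho(y)=g_3$. We need this on the measure level: the map $\rho\mapsto(\rho(z_1),\rho(z_2),\rho(y))$ pushes the measure to something absolutely continuous with respect to Haar measure on $K^3$. This uses the genus of the complement being large, so that extension is possible with submersive relative moduli; the genus $\ge 2$ hypothesis here should suffice because the complementary surface still has positive genus with a boundary component. Then $\bar f(cg)=\bar f(g)$ for commutators $c$ ranging over a positive-measure set. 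Since $K$ is semisimple, commutators generate $K$, indeed $[K,K]=K$ and every element is a commutator (Goto). The same convolution argument as in Step 2 then gives that $\bar f$ is constant, and hence so is $f$.

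\textbf{Main obstacle.} The delicate points are the measure-theoretic ones. First, in Step 1 we need the fibrewise ergodicity hypothesis to apply to almost every fibre. Second, in Steps 2 and 3 the conditional distributions of $\rho(\tilde\alpha)$ and of $[\rho(z_1),\rho(z_2)]$ must be absolutely continuous with support generating $K$. I would establish both by showing the relevant evaluation maps are submersions at a generic (irreducible) point, using Goldman's description of the tangent space as group cohomology.
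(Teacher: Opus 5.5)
Your proposal is correct and follows the paper's route. The paper proves this lemma only by saying it follows mutatis mutandis from Lemma \ref{indpm}: factor an invariant function through restriction to $T$, use $T_\alpha$ to remove the dependence on $\rho(x)$, then use $T_\beta\cdot y=[z_1,z_2]y$ with $[K,K]=K$ to remove the dependence on $\rho(y)$. Your three steps are a more careful measure-theoretic transcription of exactly this.

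One thing to tidy: in Steps 2 and 3 the conditional law of the translating element depends on the point (e.g.\ on $\rho(x)$), so this is not literally invariance under a fixed convolution, and positivity on some open set alone would not suffice. The cleanest fix is to show these conditional laws have almost-everywhere positive density on $K$; Fubini then gives constancy directly.
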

\begin{proof}
    This follows mutatis mutandis from the proof of the previous lemma.
\end{proof}
\section{Base cases and entire mapping class group}

\subsection{Once punctured torus}
Let $\Sigma$ be a torus with one boundary component $b$. Let $* \in b.$
Recall that given $g \in G$, $\Rc_g(\Sigma, *)$ is the space of representations  $\rho: \pi_1(\Sigma, *)  \to G$ such that $\rho(b) =g.$
\begin{lemma}\label{Basealg}
    For general $g \in G$,    $\Mod(\Sigma)$ acts with Zariski dense orbits on $\Rc_g(\Sigma, *)$
\end{lemma}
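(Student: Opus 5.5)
We need to show that for general $g$, $\Mod(\Sigma)$ acts with Zariski dense orbits on $\Rc_g(\Sigma,*) = \{(A,B)\in G\times G : [A,B]=g\}$. Here $\pi_1(\Sigma,*)$ is free on $x,y$ with $b=[x,y]$, and $\Mod(\Sigma)$ fixes $*\in b$.

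The plan is to use the invariant-function criterion (Theorem 2.2 of \cite{KPS}) together with the two Dehn twists $T_\alpha, T_\beta$ about the standard curves meeting $y$ and $x$ once. In the coordinates $(A,B)=(\rho(x),\rho(y))$ these act, up to conventions, by
\[
T_\alpha:(A,B)\mapsto (A,BA), \qquad T_\beta:(A,B)\mapsto(AB,B).
\]
Both preserve $[A,B]$, and $\Mod(\Sigma)\supseteq\langle T_\alpha,T_\beta\rangle$. So it suffices to show that every rational function $f$ on an irreducible component of the general fiber $F_g=\{[A,B]=g\}$ invariant under $T_\alpha,T_\beta$ is constant. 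The fibers are irreducible for general $g$ because the commutator map $G\times G\to G$ is dominant with irreducible general fiber for semisimple $G$; I would quote or check this.

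First I would reduce to a statement about the cyclic group generated by an element. Fix a general $(A,B)\in F_g$. The orbit of $(A,B)$ under $\langle T_\alpha\rangle$ is $\{(A,BA^n):n\in\ZZ\}$. Its Zariski closure is $\{(A,BZ): Z\in\overline{\langle A\rangle}\}\cap F_g$. For general $A$ (regular semisimple, generating a Zariski dense subgroup of a maximal torus), $\overline{\langle A\rangle}$ is the maximal torus $T_A=Z_G(A)^\circ$. Moreover, $BZ$ satisfies $[A,BZ]=[A,B]$ exactly when $Z\in Z_G(A)$. Hence $f$ is invariant under right multiplication of $B$ by $T_A$, and similarly under left multiplication of $A$ by the torus $T_B$. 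These two invariances are rational, so they define an action of the algebraic group generated by these torus-translation flows, which is the key structural gain.

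Next I would show that these torus flows act transitively on an open subset of $F_g$. I would do this by a tangent-space computation at a general point. The infinitesimal orbits are $\{(0,B\cdot t): t\in\mathfrak t_A\}$ and $\{(s\cdot A,0): s\in \mathfrak t_B\}$. Conjugating by the moves, one generates further directions through the iterated brackets of $\mathfrak t_A$ and $\mathfrak t_B$ transported along the flows. Since $A$ and $B$ are general, $\mathfrak t_A$ and $\mathfrak t_B$ are two generic Cartan subalgebras, and they generate $\mathfrak g$ as a Lie algebra because $G$ is semisimple. I would then argue that the Lie algebra of vector fields generated by the two flows spans $T_{(A,B)}F_g$, which has dimension $\dim G$. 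Equivalently, the composite map $(t_1,s_1,t_2,s_2,\dots)\mapsto$ the iterated translate is dominant onto $F_g$. By a standard orbit argument, an invariant rational function is then constant.

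The main obstacle is this spanning step. The flows are defined using the tori $T_A,T_B$ at varying points, so I must verify carefully that the generated distribution really has full rank $\dim G$, rather than merely that $\mathfrak t_A,\mathfrak t_B$ generate $\mathfrak g$. I would first try to exhibit one explicit point where the rank is full, for instance using a principal $\mathfrak{sl}_2$ or regular elements in opposite Borels. Rank lower semicontinuity then gives full rank at general points. If that fails, a fallback is to reduce to $K$ and use Goldman's ergodicity of the Dehn-twist flows on relative character varieties of the one-holed torus, transferring to the algebraic statement via Zariski density of $K$-points.
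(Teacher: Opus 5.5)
Your main route has a genuine gap, and it is at the step you yourself flag. The spanning claim is the entire content of the lemma, and the reason you give for it does not imply it.

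Write a tangent vector at $(A,B)$ as $(A\xi,B\eta)$ with $\xi,\eta\in\mathfrak g$. The closures of $\langle T_\alpha\rangle$ and $\langle T_\beta\rangle$ then only supply the directions $(0,B\eta)$ with $\eta\in\mathfrak t_A$ and $(A\xi,0)$ with $\xi\in\mathfrak t_B$, up to your conventions. That is rank $2r$, where $r$ is the rank of $G$. Meanwhile $\dim F_g=\dim G=r+2|\Phi^+|$; for example, this is $4$ against $8$ for $SL_3$.

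The fact that two generic Cartan subalgebras generate $\mathfrak g$ is not the relevant statement. The reason is that $\mathfrak t_A$ moves the $B$-slot while $\mathfrak t_B$ moves the $A$-slot. Brackets of these vector fields on $F_g$ are therefore not the brackets $[\mathfrak t_A,\mathfrak t_B]$ inside $\mathfrak g$. They are governed by how the centraliser $\mathfrak z(\cdot)$ varies with its argument. For instance, after flowing to $(A,Bt)$ with $t\in T_A$, the new $A$-slot directions are $\mathfrak z(Bt)$. What must be shown is that the span of such subalgebras, together with further iterates, reaches rank $\dim G$. This is a genuine computation that you have not done, and it must work for every semisimple $G$.

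Relatedly, there is no ``algebraic group generated by the flows''. The tori depend on the point, so you only have a pseudogroup, and you must work with the bracket-generated distribution, as you eventually do. As written, the decisive step is asserted rather than proved. It is exactly the nontrivial input that the paper imports from \cite{PX} instead of proving.

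Your fallback is essentially the paper's proof. There are two points to note about it.
\begin{itemize}
\item The correct citation is Theorem 2.14 of \cite{PX}, which gives ergodicity of $\Mod(\Sigma)$ on $\Rc_{k,K}(\Sigma,*)$ for a.e.\ $k\in K$ and general compact $K$. Goldman's result covers only $SU(2)$.
\item The transfer has two steps, not one. First, an ergodic action has a point with Euclidean-dense orbit. Since $\Rc_{k,K}(\Sigma,*)$ is Zariski dense in $\Rc_k(\Sigma,*)$, that orbit is Zariski dense in the fibre. Second, one passes from a.e.\ $k\in K$ to general $g\in G$. The paper does this with a very-general dichotomy together with Zariski density of $K$ in $G$.
\end{itemize}
Writing only ``Zariski density of $K$-points'' compresses both steps, and both should be spelled out.

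Finally, your claim that the general fibre of the commutator map is irreducible for every semisimple $G$ is false when $G$ is not simply connected. Take $G=PGL_2$. The commutator of $SL_2$-lifts of $A$ and $B$ is a well-defined element $\pm\tilde g$, and $T_\alpha,T_\beta$ preserve it. So $F_g$ splits into two nonempty closed $\Mod(\Sigma)$-invariant pieces, since both $\tilde g$ and $-\tilde g$ are commutators in $SL_2$. Consequently no orbit is Zariski dense. You need $G$ simply connected, or else you must argue component by component. The lemma as stated in the paper carries the same implicit assumption.
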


Before embarking on the proof of this Lemma, let us state an analogous result proven by Pickrell-Xia. Given $k \in K,$ let  $\Rc_{k,K}(\Sigma, *)$ be the space of representations  $\rho: \pi_1(\Sigma, *)  \to K$ such that $\rho(b) =k.$

\begin{lemma}[Theorem 2.14 of \cite{PX}] \label{basePX}
    For $a.e.$ $ k \in K$ the group $\Mod(\Sigma)$ acts ergodically on $\Rc_{k,K}(\Sigma, *).$ 
\end{lemma}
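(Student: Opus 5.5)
The plan is to work directly with the standard presentation of the once-holed torus and to use Dehn twists as "twist flows" along two transverse fibrations. Let $x,y\in\pi_1(\Sigma,*)$ be the standard generators, so that $b=[x,y]$. Then
\[
\Rc_{k,K}(\Sigma,*)\cong M_k:=\{(A,B)\in K\times K \mid ABA^{-1}B^{-1}=k\},
\]
with the measure induced from the symplectic (Goldman) structure. For $k$ in a full-measure set, $k$ is a regular value of the commutator map. In that case $M_k$ is a smooth compact manifold and this measure is equivalent to the smooth Lebesgue class. Since ergodicity only depends on the measure class, I would work with Lebesgue measure throughout.

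The twists $T_\alpha,T_\beta$ about the two standard curves act, up to inversion conventions, by
\[
T_\alpha:(A,B)\mapsto(A,BA),\qquad T_\beta:(A,B)\mapsto(AB,B).
\]
Let $f\in L^2(M_k)$ be $\Mod(\Sigma)$-invariant. The goal is to show $f$ is a.e. constant.

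\textbf{Step 1: $f$ depends only on $A$.} Fix $A$ and consider the fiber of $(A,B)\mapsto A$. The condition $BA^{-1}B^{-1}=A^{-1}k$ shows the fiber is a coset $B_0Z(A)$, or is empty. For a.e. $A$, the element $A$ is regular, so $Z(A)=T_A$ is a maximal torus, which is connected. By Kronecker's theorem, for a.e. $A$ the powers $A^n$ are dense in $T_A$, since the eigenvalue angles are rationally independent off a null set. On the fiber $B_0T_A$, the twist $T_\alpha$ acts as right translation by $A$. This is therefore an ergodic rotation of the torus coset. A Fubini/disintegration argument along the fibration then gives $f(A,B)=\varphi(A)$ a.e. The same argument with $T_\beta$ shows $f(A,B)=\psi(B)$ a.e.

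\textbf{Step 2: $\varphi$ is invariant under torus-coset moves.} Invariance under $T_\beta$ gives $\varphi(A)=\varphi(AB)$ for a.e. $(A,B)\in M_k$. For fixed $A$, the element $B$ ranges over $B_0T_A$. Hence $\varphi$ is a.e. constant on the set $AB_0T_A$, which is a translate of a maximal torus. Alternatively one can phrase this as $\varphi(A)=\psi(B)$ on a positive-measure set of pairs.

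\textbf{Step 3 (main obstacle): $\varphi$ is constant.} The remaining task is to show that the equivalence relation on the open set $U_k=\{A : A^{-1}k\sim A^{-1}\}$ generated by these moves is ergodic. Here $U_k$ is the projection of $M_k$, and the moves are $A\mapsto Ab$ with $b$ in the relevant torus coset. My first attempt is a submersion argument. Composing $\dim K$ such moves with varying base points gives a smooth map from a product of tori into $K$. Its differential at a generic point has image spanned by the Lie algebras of conjugates $\mathrm{Ad}(g)\mathfrak t$. For semisimple, indeed for any compact connected, $K$ these conjugates span $\mathfrak k$, so the map is a submersion somewhere. Consequently $\varphi$ is a.e. constant on a neighborhood of a.e. point, i.e. locally constant in the measurable sense. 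Connectedness of $M_k$ for generic $k$, which follows from the fibration over $U_k$ with connected torus fibers together with connectedness of $U_k$, then upgrades this to global constancy. Care is needed to handle the a.e. quantifiers; I would do this by restricting to a full-measure invariant Borel set on which Steps 1 and 2 hold pointwise. I expect this transitivity/measure-theoretic bookkeeping, and verifying connectedness of $M_k$, to be the delicate part.
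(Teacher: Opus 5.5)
The paper gives no argument for this lemma; it imports it as Theorem 2.14 of Pickrell--Xia. Your proposal therefore has to stand on its own, and it has a genuine gap exactly at the step you flagged.

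Steps 1 and 2 are fine in outline. One caveat: ``a.e.\ $A$'' must refer to the pushforward of the measure on $M_k$, whose image is Haar-null in $K$, so the Kronecker genericity of $A$ has to be checked on $M_k$ itself (e.g.\ by analyticity).

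Step 3 rests on a false premise: $U_k=\{A : A^{-1}k\sim A^{-1}\}$ is not open. Requiring $A^{-1}k$ to be conjugate to $A^{-1}$ imposes $r=\mathrm{rank}\,K$ conditions. Consistently, $\dim M_k=\dim K$ and the generic fibres of $M_k\to U_k$ are $r$-dimensional, so $\dim U_k=\dim K-r$. Every admissible move stays in $U_k$, so no composite of moves can be a submersion onto $K$. The fact that the subspaces $\mathrm{Ad}(g)\mathfrak t$ span $\mathfrak k$ concerns unconstrained translations along arbitrary tori. At $A$ you may only translate by the specific coset $B_0(A)T_A$ forced by $[A,B]=k$.

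What is really needed is accessibility inside $M_k$ for the two $r$-dimensional torus foliations, the fibres of $(A,B)\mapsto A$ and of $(A,B)\mapsto B$. Pointwise these give only $2r<\dim K$ directions, already $2<3$ for $SU(2)$. To close this you must use more of $\Mod(\Sigma)$.
\begin{itemize}
\item The boundary twist $T_b$ acts by conjugation by $k$, hence gives $Z(k)$-invariance for generic $k$. For $SU(2)$ this third direction is generically independent of the other two, and that suffices.
\item In higher rank, where $3r<\dim K$ (e.g.\ $6<8$ for $SU(3)$), you also need the torus fibrations $\rho\mapsto\rho(\phi(\alpha))$, $\phi\in\Mod(\Sigma)$. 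Your Step 1 argument shows $f$ is invariant along these. You then need an actual computation that their leaves span $TM_k$ at a generic point, or a bracket/holonomy computation.
\end{itemize}
That computation is the substance of the cited theorem, and it is absent from your proposal.

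The globalization step is also unsupported, and in general false. You simply assert that $U_k$ is connected. Moreover, only the generic fibres of $M_k\to U_k$ are connected tori; over non-generic $A$ the centralizer can be disconnected.

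When $K$ is not simply connected, $M_k$ really is disconnected and the conclusion fails. Take $K=SO(3)$ and lift $A,B$ to $\tilde A,\tilde B\in SU(2)$. Then:
\begin{itemize}
\item $[\tilde A,\tilde B]\in\{\tilde k,-\tilde k\}$, independently of the lifts, and this value is locally constant on $M_k$;
\item it is invariant under $\Mod(\Sigma)$, which acts through automorphisms of $F_2$ fixing $[x,y]$, compatibly with lifting;
\item both values occur, since every element of $SU(2)$ is a commutator.
\end{itemize}
Its indicator function is a nonconstant invariant $L^2$ function. So at best your argument gives ergodicity on each component of $M_k$. To get the statement as written you need $K$ simply connected, together with a genuine proof that $M_k$ is connected for generic $k$; this does not follow from the torus fibration alone.
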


Let us now prove Lemma \ref{Basealg}.
\begin{proof}
    Let $K \subseteq G$ be a maximal compact subgroup.
    We will prove this theorem in two steps, we will first prove that for a.e. $k \in K$ (with respect to the Haar measure on $K$), $\Mod(\Sigma)$ acts with Zariski dense orbits on $\Rc_k(\Sigma).$ We will then show that this implies the Lemma.

    Let us note that we can view $\Rc_{k,K}(\Sigma, *)$ as a Zariski dense subset of $\Rc_k(\Sigma).$ Furthermore, for general $k \in K$ the action of $\Mod(\Sigma)$ on $\Rc_{k,K}(\Sigma, *)$ is ergodic and hence possesses a point $\rho$ whose orbit is dense in the Euclidean topology. But then the Zariski closure of $\Mod(\Sigma) \cdot \rho$ contains $\Rc_{k,K}(\Sigma, *)$ and hence the closure must be all of $\Rc_k(\Sigma, *).$

    Now we will establish that the Lemma holds for all $g \in G$ (not just $k \in K \subseteq G$). We note that either one of the following conditions must be true:
    \begin{enumerate}
        \item For very general $g \in G$, $\CC (Rc_g(\Sigma^+))^{\Mod(\Sigma)} = \CC.$
        \item  For very general $g \in G$, $\CC (Rc_g(\Sigma^+))^{\Mod(\Sigma)} \neq \CC.$
    \end{enumerate}
    However, the fact that for a.e. $k \in K$ we have $\CC (Rc_g(\Sigma^+))^{\Mod(\Sigma)} = \CC$ is true rules out the second possibility, since $K \subseteq G$ is not contained in any divisor, it is Zariski dense.
\end{proof}

We note that the above could also be proven more directly, i.e. without appealing to Lemma \ref{basePX}, by mimicking the proof of Theorem 2.14 \ref{basePX} in \cite{PX}. But this would be somewhat more involved. 
\subsection{Framed mapping class groups}

In this section we will prove that a framed mapping class group acts with Zariski dense orbits (or ergodically) on relative character varieties. Let us first recall what framed mapping class groups are.

There are several ways of defining framed mapping class groups. For us we will choose a winding number function approach. We suggest reading \cite{CS} section 2 of \cite{CS} for more details.
Let $\Sigma$ be a Riemann surface. Let $C(\Sigma)$ be the set of oriented simple closed curves in $\Sigma$.
Let $\phi: C(\Sigma) \to \ZZ$ be a function satisfying:
\begin{enumerate}
    \item $\phi(T_a \cdot x) = \phi(x) + <x,a>\phi(a),$ (where $<>$ is the algebraic intersection number).
    \item Let $S \subseteq C$ be a subsurface with $k$ boundary components $a_1, \dots, a_k,$ oriented so that $S$ lies to the left of each $a_i$. Then, $\phi(a_1) + \phi(a_2) + \dots + \phi(a_k) = \chi(S).$
\end{enumerate}

Such a function is called a winding number function. The mapping class group $\Mod(\Sigma)$ acts on the set of such  winding number functions. Given a winding number function $\phi$, let $\Mod(\Sigma)[\phi]$ denote the stabiliser of $\phi$. We will call such a stabiliser subgroup a framed mapping class group.

\begin{definition}
     Let $\alpha, \beta$ be disjoint simple closed curves. Let $a$ be an arc joining them together, such that the interior of $a$ is disjoint from $\alpha \cup \beta,$ and such that $a$ approached both $\alpha$ and $\beta$ from the left. Let  $\alpha \cup_a \beta $ be the loop formed by following $\alpha$ going through the arc $a,$ following $\beta$ and coming back through $a$. The loop $\alpha \cup_a \beta $  can be perturbed to a simple closed curve.
\end{definition}

We note that $\alpha \cup_a \beta $, $\alpha$ and $\beta$ together bound a pair of pants $P$. We orient  $\alpha \cup_a \beta $ such that $P$ is on the right. We then note that $\phi (\alpha\cup_a \beta) = \phi(\alpha) + \phi(\beta) -1.$

In this section, we will need to deal with relative representation varieties for surfaces with boundary quite often. Our convention is as follows. Let $\Sigma$ be a surface with boundary and $*$ a point on a distinguished boundary component $b$. Let the other boundary components be $\delta_1, \dots \delta_n$ ($n$ is allowed to be zero). Our boundary data $\partial$ will consist of $n$ conjugacy classes $d_1, \dots d_n$ in $G$ (resp. $K$). Let  $g \in G$ (resp. $K$). We then define $\Rc^\partial_g(\Sigma, *)$ (resp. $\Rc_{g,K}^\partial(\Sigma, *)$ ) to be the space of representations $\rho$ of $\pi_1(\Sigma, *) \to G$ (resp. $K$) such that $\rho(b) =g$ and $\rho(\delta_i) \in d_i$.

\begin{lemma}\label{framedense}
    Let $\Sigma$ be a surface with boundary of genus $ \ge 4$. Let $*$ denote a point on the boundary. Let $\partial$ denote a choice of boundary data. Let $g \in G$. Let $\phi$ be a framed winding number function. Then the $\Mod(\Sigma)[\phi]$ action on  $\Rc^{\partial}_g(\Sigma, *)$ admits a Zariski dense orbit.
\end{lemma}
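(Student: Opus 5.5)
We prove Lemma \ref{framedense} by induction on the genus, building $\Sigma$ from a once-holed torus by repeatedly gluing on two-holed tori, and using the induction lemmas of Section 3. The key input is that $\Mod(\Sigma)[\phi]$ contains the Dehn twists $T_c$ about every nonseparating curve $c$ with $\phi(c)=0$. Indeed, by property (1) of winding number functions, $\phi(T_c\cdot x)=\phi(x)+\langle x,c\rangle\phi(c)=\phi(x)$. It also contains $\Mod(\Sigma')$ for any subsurface $\Sigma'$ on which we can arrange $\phi$ to be "trivial" in the relevant sense. So the plan is to find, at each stage, the curves $\alpha,\beta$ demanded by Lemma \ref{indtor} (or Lemma \ref{indpm}) with $\phi(\alpha)=\phi(\beta)=0$. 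The closure operation from Section 2 lets us replace $\Mod(\Sigma)[\phi]$ by the group generated by it and these twists.

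\textbf{Base case.} Choose a once-holed torus $\Sigma_1\subseteq\Sigma$ containing $*$ in its boundary and spanned by curves $a,b$ with $\phi(a)=\phi(b)=0$. Such a pair exists: by the change of coordinates principle and the classification of framings (see \cite{CS}), admissible curves of winding number $0$ fill out a chain. The twists $T_a,T_b$ generate $\Mod(\Sigma_1)$ up to the boundary twist, which acts trivially on the relative variety. So Lemma \ref{Basealg} shows that $\Mod(\Sigma)[\phi]\cap \Mod(\Sigma_1)$ acts with dense orbits on $\Rc_g(\Sigma_1,*)$ for general $g$.

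\textbf{Inductive step.} Suppose $\Sigma_h\subseteq\Sigma$ is a genus $h$ subsurface with one boundary component, and the group $\Gamma_h$ generated by $\phi$-admissible twists in $\Sigma_h$ acts with dense orbits on its relative varieties for general boundary value. Lemma \ref{changeofclosurealg} then gives $\overline{\Gamma_h}\supseteq \Mod(\Sigma_h)$. Now glue a two-holed torus $T$ to get $\Sigma_{h+1}$. We choose $\alpha,\beta$ as in Lemma \ref{indtor}. Given any candidate $\alpha_0$, we modify it by twisting along curves inside $\Sigma_h$ (allowed, since the whole of $\Mod(\Sigma_h)$ lies in the closure). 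This changes $\phi(\alpha_0)$ by multiples of $\phi$-values of curves in $\Sigma_h$ meeting $\alpha_0$ once, and those values can be chosen to be $\pm1$. So we can arrange $\phi(\alpha)=0$, and likewise $\phi(\beta)=0$. Lemma \ref{indtor} then gives density on $\Rc_g(\Sigma_{h+1},*)$ for the closure. Since closures of closures are closures, the original group has dense orbits.

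If the meridian $\beta$ cannot be made admissible, we instead use the $\pm1$ curve of Lemma \ref{indpm}, which is where the genus $\ge 4$ hypothesis enters. Its winding number is adjustable in the same way, because it passes twice through the strip with opposite orientations and otherwise runs through $\Sigma_1$. The remaining boundary components, with data $\partial$, are attached at the end using Lemma \ref{indsamebound} with an admissible $\alpha$, adjusted the same way.

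\textbf{Main obstacle.} The hardest step is the winding number bookkeeping: showing that the required curves can always be chosen with $\phi=0$ while keeping the exact intersection patterns with $x$ and $y$. One also needs the base torus to be available, which requires some curve of winding number $0$. I would handle this through the normal forms of framings in \cite{CS}, possibly passing first to a genus $\ge 2$ base subsurface where the Arf-type obstruction vanishes.
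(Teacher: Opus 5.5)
Your overall architecture matches the paper's: a base once-holed torus $\Sigma_1$ with $\Mod(\Sigma_1)\subseteq\Mod(\Sigma)[\phi]$ (via Lemma \ref{Basealg}), repeated use of Lemma \ref{indtor} with Lemma \ref{changeofclosurealg}, and Lemma \ref{indsamebound} for the remaining boundary components. The gap is in the step that actually uses the framing, namely producing $\alpha,\beta$ with $\phi=0$. Your mechanism for this fails where it is first needed.

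\emph{The first inductive step.} Here $\Sigma_h=\Sigma_1$. Since $T_a,T_b$ preserve $\phi$ and $\langle T_a,T_b\rangle=\Mod(\Sigma_1)$ acts transitively on nonseparating curves of $\Sigma_1$, every nonseparating $c\subseteq\Sigma_1$ has $\phi(c)=0$. Hence $\phi(T_c^k\alpha_0)=\phi(\alpha_0)+k\langle\alpha_0,c\rangle\phi(c)=\phi(\alpha_0)$. So there is no curve in $\Sigma_1$ with $\phi(c)=\pm1$ crossing $\alpha_0$ once. In fact no modification of $\alpha_0$ supported in $\Sigma_1$ changes its winding number.

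\emph{The curve $\beta$.} The meridian $\beta$ required by Lemma \ref{indtor} lies in $T$, disjoint from $\Sigma_h$. Twisting inside $\Sigma_h$ does not touch it, so ``likewise $\phi(\beta)=0$'' is unsupported. Your fallback to Lemma \ref{indpm} does not help early on, because that lemma needs the inner surface to have genus $\ge 4$ and one boundary component.

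Your parenthetical ``allowed, since $\Mod(\Sigma_h)$ lies in the closure'' is beside the point. What you need is $T_\alpha\in\Mod(\Sigma)[\phi]$, which comes from $\phi(\alpha)=0$; conjugating by closure elements would require $T_{\alpha_0}$ to be in the group already. Passing to a genus-$2$ base does not fix this either. No genus-$2$ subsurface has its whole mapping class group inside $\Mod(\Sigma)[\phi]$, so density for such a base is exactly the missing inductive step.

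The missing idea, which is how the paper argues, is to choose the handle adaptively rather than fixing $T$ in advance, and to adjust winding numbers \emph{outside} the current subsurface. At stage $h$, proceed as follows.
\begin{enumerate}
\item Take a curve $\alpha$ meeting $\Sigma_h$ in a single arc. Force $\phi(\alpha)=0$ by arc connect summing its outer arc with a suitable curve in $\Sigma\setminus\Sigma_h$ when that complement has genus $\ge 2$. Once $\Sigma_h$ itself has genus $\ge 2$, your trick of summing with a curve inside $\Sigma_h$ becomes available instead.
\item Take $\beta$ meeting the thickening of $\Sigma_h\cup\alpha$ in a single arc running from one of its two boundary components to the other, and adjust it the same way.
\item \emph{Define} $\Sigma_{h+1}$ as the thickening of $\Sigma_h\cup\alpha\cup\beta$. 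Then $\beta$ is automatically the meridian of the attached two-holed torus.
\end{enumerate}
The hypothesis of genus $\ge 4$ is what makes one of the two adjustments available at every stage. For instance, at the first stage the complement of the thickening of $\Sigma_1\cup\alpha$ has genus $g-2\ge 2$. It does not enter through Lemma \ref{indpm}, which the paper does not use here. The same device produces the admissible curves $\gamma_j$ for the final strips.

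A minor point: $T_a,T_b$ generate all of $\Mod(\Sigma_1)$, including the boundary twist $(T_aT_b)^6$. That twist does not act trivially on the based variety (it conjugates by $g$), though this does not affect your base case.
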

\begin{proof}
It suffices to prove that the closure of $\Mod(\Sigma)[\phi]$ with respect to the action on $\Rc^{\partial}_g(\Sigma, *)$ has a Zariski dense orbit.
Our proof will be by iteratively applying Lemmas \ref{indtor} and \ref{indsamebound} . We will produce a series of surfaces $\Sigma_0 \subseteq  \Sigma_1 \dots \subseteq \Sigma_N \subseteq \Sigma$ satisfying the following.
\begin{enumerate}
   \item $\Sigma_0$ is a torus with one boundary component, such that the image of $\Mod(\Sigma_0)$ is contained in $\Mod(\Sigma)[\phi].$
    \item $\Sigma_N$ is a subsurface of the same genus as $\Sigma$.
    \item $\Sigma_{i+1}$ is obtained by attaching a two holed torus $T_i$ to $\Sigma_i$ and furthermore,
    \item there are simple closed curves $\alpha_i, \beta_i \subseteq \Sigma$ satisfying $\phi(\alpha_i) = \phi(\beta_i) =0,$ such that a thickening of $\Sigma_i \cup \alpha_i \cup \beta_i$ is $\Sigma_{i+1}.$
    \item The curves $\alpha_i$ and $\beta_i$ are in appropriate position to apply Lemma \ref{gluetor}.

\end{enumerate}
Let us note that if we could do this, we could iteratively apply Lemmas \ref{indtor} and \ref{changeofclosurealg} to conclude that  the closure of the subgroup $$<\Mod(\Sigma_0, T_{\alpha_1}, T_{\beta_1}, \dots T_{\alpha_N}, T_{\beta_N})>$$ contains $\Mod(\Sigma_N).$

 We note that for any choice of framing $\phi$, the first of the  properties in the list can always be satisfied, we can find some $\Sigma_0$ such that $\Mod(\Sigma_0) \subseteq \Mod(\Sigma)[\phi].$

 Similarly, the other conditions can always be satisfied, we only have to produce at each stage curves $\alpha_i, \beta_i$ with framed winding number zero and prescribed intersection patterns. To produce these curves we proceed as follows. We first construct some curve $\alpha$ whose intersection with $\Sigma_i$ is a single arc. If $\Sigma \setminus \Sigma_i$ is of genus $\ge 2$ there is a curve $\alpha_i$ such that $\alpha \cap \Sigma_i  =\alpha_i \cap \Sigma_i$ and $\phi (\alpha_i) =0,$ we may produce this $\alpha_i$ by forming the arc connect sum of $\alpha$ with some appropriate curve $c \subseteq \Sigma \setminus \Sigma_i,$ such a $c$ necessarily exists because of our genus assumption. If $\Sigma_i$ is of genus $\ge 2$ we can produce $\alpha_i$ by arc connect summing with a curve $c \subseteq \Sigma_i.$ Once we produce $\alpha_i$ we can then produce  some $\beta$ whose intersection with a thickening of $\Sigma_i \cup \alpha_i$ is a single arc entering throug one boundary component and leaving through the other. We can then modify $\beta,$ just as above to obtain the require curve $\beta_i$ such that $\phi(\beta_i)=0.$
 

 This allows us to inductively establish that $\overline{\Mod(\Sigma)[\phi]} \cap \Mod(\Sigma_N)$ acts with Zariski dense orbits on the varieties $\Rc_g(\Sigma_N, *_N)$ for general $g$, and thus $\Mod(\Sigma_N) \subseteq \overline{\Mod(\Sigma)[\phi]}.$

 Finally, we find strips $a_j$ in $\Sigma$ with boundary on $\Sigma_N$, such that attaching these arcs to $\Sigma_N$ and thickening them gives us $\Sigma$. This can always be accomplished since $\Sigma_N$ and $\Sigma$ are of the same genus. We then find loops $\gamma_j$ such that $\gamma_j \cap \Sigma \setminus \Sigma_N  = a_j,$ and $\phi(\gamma_j ) =0.$ Again, this can always be accomplished. We have that $T_{\gamma_i} \in \Mod(\Sigma)[\phi].$ We can then apply Lemma \ref{indsamebound} repeatedly to conclude that $\overline{\Mod(\Sigma)[\phi]}$ acts with Zariski dense  orbits on relative representation varieties of subsurfaces containing $\Sigma_N$ with more and more  of the boundary components  of $\Sigma$ until we finally conclude that $\overline{\Mod(\Sigma) [\phi]}$ acts with Zariski dense orbits on $\Rc^{\partial}_g(\Sigma, *).$

\end{proof}
\begin{lemma}\label{framedenseerg}
    Let $\Sigma$ be a surface with boundary of genus $ \ge 2$. Let $*$ denote a point on the boundary. Let $\partial$ denote a choice of boundary data. Let $\phi$ be a framed winding number function. Then the $\Mod(\Sigma)[\phi]$ action on  $\Rc^{\partial}_K(\Sigma, *)$ is ergodic.
\end{lemma}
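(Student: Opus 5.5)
The plan is to run the proof of Lemma \ref{framedense} verbatim, replacing each algebraic input by its ergodic analogue. We work with closures taken with respect to the measure-preserving action on $\Rc^{\partial}_K(\Sigma,*)$, using $L^2$-invariants in place of rational invariants. It suffices to show that $\overline{\Mod(\Sigma)[\phi]}$ acts ergodically, since by definition an $L^2$ function invariant under $\Mod(\Sigma)[\phi]$ is invariant under its closure.

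\emph{Base case.} Choose a one-holed torus $\Sigma_0$ with $\Mod(\Sigma_0)\subseteq \Mod(\Sigma)[\phi]$; such a $\Sigma_0$ exists for any framing, as noted in the proof of Lemma \ref{framedense}. By Lemma \ref{basePX}, $\Mod(\Sigma_0)$ acts ergodically on $\Rc_{k,K}(\Sigma_0,*)$ for a.e.\ $k$.

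\emph{Torus attachments.} Build the chain $\Sigma_0\subseteq \Sigma_1\subseteq\dots\subseteq\Sigma_N$ exactly as in Lemma \ref{framedense}. At each stage, choose curves $\alpha_i,\beta_i$ with $\phi(\alpha_i)=\phi(\beta_i)=0$ in the position required by Lemma \ref{glutorerg}. The curve $\alpha_i$ is produced by arc connect summing an arc through $\Sigma_i$ with an auxiliary curve so as to correct the winding number; $\beta_i$ is produced in the same way. Since $\phi(\alpha_i)=\phi(\beta_i)=0$, the twists $T_{\alpha_i},T_{\beta_i}$ preserve $\phi$ and so lie in $\Mod(\Sigma)[\phi]$. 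Lemma \ref{glutorerg} then gives ergodicity of $\overline{\langle \Mod(\Sigma_i),T_{\alpha_i},T_{\beta_i}\rangle}$ on $\Rc_{k,K}(\Sigma_{i+1},*)$. Lemma \ref{changeofclosureerg}, applied inductively, upgrades this to $\Mod(\Sigma_{i+1})\subseteq \overline{\Mod(\Sigma)[\phi]}$. Finally, add the remaining boundary components by strips, using twists along curves $\gamma_j$ with $\phi(\gamma_j)=0$, and apply the ergodic version of Lemma \ref{indsamebound} repeatedly to obtain ergodicity on $\Rc^{\partial}_K(\Sigma,*)$.

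\emph{Main obstacle: the genus bound.} Lemma \ref{framedense} assumes genus $\ge 4$, but here only genus $\ge 2$ is assumed. The issue is the winding-number correction. To construct $\alpha_i$ with $\phi(\alpha_i)=0$, we need a genus-one subsurface, disjoint from the relevant arc, in either $\Sigma_i$ or $\Sigma\setminus\Sigma_i$, on which $\phi$ takes enough values to connect sum against. In genus $2$ the first step ($\Sigma_0$ of genus $1$, complement of genus $1$) is tight. For this step I would first try to use the complementary one-holed torus $\Sigma'$: winding numbers of nonseparating curves in $\Sigma'$ realise every residue needed, because $\phi$ changes by $\phi(a)\langle x,a\rangle$ under twists. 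If $\phi$ restricted to $\Sigma'$ is $0$ on some curve, we use that curve. Otherwise we take $\alpha$ to be $x'\cup_a c$ with $\phi(c)$ chosen so that $\phi(\alpha)=\phi(x')+\phi(c)-1=0$. The $\beta$ step can be handled symmetrically, by swapping the roles of $\Sigma_0$ and $\Sigma'$.

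\emph{Fallback.} If this explicit construction fails, the fallback is to start the induction from a one-holed torus on which the framing is trivially compatible. Alternatively, since ergodicity only needs to hold almost everywhere, we can note that $K$-valued representations allow the same twist identities as in the algebraic case, so the genus hypotheses in the induction lemmas enter only through the curve constructions, which we can check by hand in low genus.
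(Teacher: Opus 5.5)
\textbf{Verdict.} Apart from the low-genus discussion, your argument is the paper's. The paper's entire proof is that the proof of Lemma \ref{framedense} goes through mutatis mutandis, with Lemmas \ref{basePX}, \ref{glutorerg}, \ref{changeofclosureerg} and the ergodic strip-gluing lemma replacing their algebraic versions, exactly as you do. You are right that this inherits the curve construction of Lemma \ref{framedense}, which is stated for genus $\ge 4$, so it gives no separate argument in genus $2$; the paper's one-line proof is no better there. But your genus-$2$ repair does not close this gap, and it cannot: the obstruction is parity, not missing detail.

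\textbf{The parity obstruction.} In the position required by Lemma \ref{glutorerg}, $\alpha$ crosses $b_0=\partial\Sigma_0$ exactly twice.
\begin{itemize}
\item It meets $\Sigma_0$ in an essential arc $a$. Otherwise $\alpha$ isotopes into $\Sigma'=\Sigma\setminus\Sigma_0$ and the proof of Lemma \ref{glutorerg} breaks down.
\item It meets $\Sigma'$ in an arc $a'$ that must be nonseparating in $\Sigma'$. Otherwise no $\beta$ can join the two boundary curves of a neighbourhood of $\Sigma_0\cup\alpha$.
\end{itemize}
Closing $a$ and $a'$ up along $b_0$ gives disjoint nonseparating curves $\hat a\subseteq\Sigma_0$ and $\hat a'\subseteq\Sigma'$ with $[\alpha]\equiv[\hat a]+[\hat a']$ mod $2$. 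Let $q(c)=\phi(c)+1 \bmod 2$ be the quadratic form of the mod-$2$ reduction of $\phi$, and put $V=H_1(\Sigma_0;\ZZ/2)$. The condition $\Mod(\Sigma_0)\subseteq\Mod(\Sigma)[\phi]$ forces $\phi=0$ on every nonseparating curve of $\Sigma_0$. Hence $q\equiv 1$ on $V\setminus 0$, so $q|_V$ has Arf invariant $1$.

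Now take $\Sigma$ of genus $2$ with one boundary component, and $\phi=0$ on all four curves of a geometric symplectic basis. Then $\mathrm{Arf}(q)=0$, so $q|_{V^\perp}$ has Arf invariant $1$, i.e.\ $q\equiv 1$ on $V^\perp\setminus 0$. Therefore $q([\alpha])=q([\hat a])+q([\hat a'])=0$, and $\phi(\alpha)$ is odd.

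\textbf{Consequences for your proposal.}
\begin{itemize}
\item For every admissible $\Sigma_0$, no curve in the required position has $\phi(\alpha)=0$, which is what $T_\alpha\in\Mod(\Sigma)[\phi]$ requires. So re-choosing $\Sigma_0$, your fallback, does not help.
\item The same computation shows every nonseparating curve of $\Sigma'$ has even winding number in this example. This refutes your claim that winding numbers in $\Sigma'$ `realise every residue'.
\item By your own criterion there was no room anyway. Cutting $\Sigma'$ along $a'$ leaves a planar surface, and cutting $\Sigma_0$ along $a$ leaves an annulus, so there is no genus-one subsurface disjoint from $\alpha$ on either side.
\end{itemize}
So your proposal, like the paper's proof, establishes the lemma only in the genus range where the construction of Lemma \ref{framedense} actually runs. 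Genus $2$ would need elements of $\overline{\Mod(\Sigma)[\phi]}$ other than twists about winding-number-zero curves in the position of Lemma \ref{glutorerg}, and neither you nor the paper supplies them.
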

\begin{proof}
    This follows mutatis mutandis from the proof of the above Lemma.
\end{proof}

We note that the above two lemmas work even in the case where $\Sigma$ is a surface with one boundary component and the monodromy around the puncture is trivial. Thus we have:
\begin{lemma}
    Let $\Sigma$ be a surface with one boundary component $b$. Let $* \in b$. Let $\bar\Sigma$ be the surface obtained by capping off the boundary with a disk.
    Then:
    \begin{enumerate}
        \item $\Mod(\Sigma)$ acts with Zariski dense orbits on $\Rc_1(\Sigma, *) = \Rc(\bar \Sigma, *).$
        \item $\Mod(\bar \Sigma, *)$ acts with Zariski dense orbits on $\Rc(\bar \Sigma, *)$.
        \item $\Mod(\Sigma)$ acts ergodically on $\Rc_{1,K}(\Sigma, *) = \Rc_K(\bar \Sigma, *).$
        \item   $\Mod(\bar \Sigma, *)$ acts ergodically on $\Rc_K(\bar \Sigma, *)$.
    \end{enumerate} 
\end{lemma}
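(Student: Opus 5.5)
The plan is to specialise Lemmas \ref{framedense} and \ref{framedenseerg} to the case where there is no additional boundary data and the boundary monodromy is $g=1$ (resp.\ $k=1$). We then pass from the bordered surface $\Sigma$ to the capped surface $\bar\Sigma$ with a marked point.

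\emph{Items (1) and (3).} Since $\pi_1(\bar\Sigma,*)$ is the quotient of $\pi_1(\Sigma,*)$ by the normal closure of $b$, representations of $\pi_1(\Sigma,*)$ with $\rho(b)=1$ are exactly representations of $\pi_1(\bar\Sigma,*)$. This identification is $\Mod(\Sigma)$-equivariant, which gives $\Rc_1(\Sigma,*)=\Rc(\bar\Sigma,*)$. Because $\Mod(\Sigma)[\phi]\subseteq\Mod(\Sigma)$, it suffices that $\Mod(\Sigma)[\phi]$ has no nonconstant invariant rational (resp.\ $L^2$) functions on $\Rc_1(\Sigma,*)$. I would re-run the proof of Lemma \ref{framedense} with $n=0$ and $g=1$. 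The inductive steps (Lemmas \ref{indtor}, \ref{changeofclosurealg}, \ref{indsamebound}) only use Zariski density on the fibres $\Rc_{g'}(\Sigma_i,*)$ for a general $g'$. A general boundary value $g'$ still occurs for the intermediate subsurfaces $\Sigma_i\subsetneq\Sigma$, even when the outer boundary monodromy is $1$, because the complement $\Sigma\setminus\Sigma_i$ has positive genus. So the only point to check is the last step, and there the $T_\alpha,T_\beta$ argument of Lemma \ref{indtor} works for every $g$, including $g=1$. The remark preceding the statement already records this. In the case where $\Sigma_N=\Sigma$ no strips remain, so Lemma \ref{indsamebound} is not needed. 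The ergodic item (3) is identical, using Lemma \ref{framedenseerg} with $k=1$.

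\emph{Items (2) and (4).} Collapsing the boundary to the marked point gives a surjection $\Mod(\Sigma)\to\Mod(\bar\Sigma,*)$, whose kernel is generated by the boundary Dehn twist $T_b$. The identification $\Rc_1(\Sigma,*)\cong\Rc(\bar\Sigma,*)$ intertwines the two actions: $T_b$ acts by conjugation by $\rho(b)=1$, hence trivially, so the $\Mod(\Sigma)$ action factors through $\Mod(\bar\Sigma,*)$. Orbits of the two groups therefore coincide, and (2) follows from (1), and (4) from (3).

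\emph{Main obstacle.} The delicate point is verifying that the generic-fibre hypotheses used in the induction survive when the final boundary value is the special element $1$. Concretely, when we restrict to the torus piece $T$, the map $\rho\mapsto\rho(\tilde b_0)$ must still hit a general element of $G$ even with $\rho(b)=1$. I would check this directly: $\rho(\tilde b_0)=[\rho(x),\rho(y)]$ up to the fixed boundary value, and the commutator map $G\times G\to G$ is dominant for semisimple $G$. For the same reason, the set of $\rho(\tilde\alpha)$ compatible with given $\rho(x),\rho(y)$ still generates $G$.
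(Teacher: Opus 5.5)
Your proposal is correct and is essentially the paper's proof. Items (1) and (3) come from applying Lemmas \ref{framedense} and \ref{framedenseerg} with trivial boundary value to a framed subgroup $\Mod(\Sigma)[\phi]\subseteq\Mod(\Sigma)$; items (2) and (4) follow because the action on $\Rc_1(\Sigma,*)=\Rc(\bar\Sigma,*)$ factors through the surjective capping map $\Mod(\Sigma)\to\Mod(\bar\Sigma,*)$. Your extra check that generic boundary values still occur when the outer monodromy is $1$ (dominance of the commutator map $G\times G\to G$, or in the ergodic case absolute continuity of the pushed-forward Haar measure) just spells out the remark the paper makes immediately before the lemma.
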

\begin{proof}
    Statements (2) and (4) immediately follow from statements (1) and (3). Statements (1) and (3) in  turn follow from the fact that $\Mod(\Sigma)$ contains a framed mapping class group $\Mod(\Sigma)[\phi]$ (for any  choice of framing) and the previous two lemmas.
\end{proof}

\begin{lemma}\label{rempunct}
    Let $\Sigma$ be a closed surface, with a basepoint $*$. Let $\Gamma \subseteq \Mod(\Sigma,*) $ be a group that acts with Zariski dense orbits on $\Rc(\Sigma, *).$ Then the image of $\Gamma$ in $\Mod(\Sigma)$ acts with Zariski dense orbits on $\XX(\Sigma).$
\end{lemma}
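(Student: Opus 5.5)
We have to show that if $\Gamma \subseteq \Mod(\Sigma,*)$ has a Zariski dense orbit on $\Rc(\Sigma,*)$, then its image $\bar\Gamma \subseteq \Mod(\Sigma)$ has a Zariski dense orbit on $\XX(\Sigma)$. The plan is to push the dense orbit forward along the GIT quotient map $q:\Rc(\Sigma,*) \to \XX(\Sigma)=\Rc(\Sigma,*)/G$, and then check that the induced action on the target is the one by $\bar\Gamma$.

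\textbf{Step 1: $q$ is equivariant.} The map $\Mod(\Sigma,*)\to\Mod(\Sigma)$ forgets the base point. Its kernel is the point-pushing subgroup $\pi_1(\Sigma,*)$. A point-pushing element $\gamma$ acts on $\pi_1(\Sigma,*)$ by an inner automorphism, namely conjugation by $\gamma$. Hence it sends $\rho$ to $\rho(\gamma)^{\pm1}\rho\,\rho(\gamma)^{\mp1}$, which is a $G$-conjugate of $\rho$. More generally, any $h\in\Mod(\Sigma,*)$ acts on $\Rc(\Sigma,*)$ by precomposition with an automorphism of $\pi_1$, and this commutes with the conjugation action of $G$. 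Therefore the $\Mod(\Sigma,*)$ action descends to $\XX(\Sigma)$, factors through $\Mod(\Sigma)$, and $q\circ h=\bar h\circ q$. In particular, for every $\rho$,
$$q(\Gamma\cdot\rho)=\bar\Gamma\cdot q(\rho).$$

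\textbf{Step 2: push the dense orbit forward.} The quotient map $q$ is surjective, since a GIT quotient of an affine variety by a reductive group is a categorical quotient onto $\mathrm{Spec}\,\CC[\Rc]^G$ and is surjective. It is also continuous in the Zariski topology. Take $\rho$ with $\overline{\Gamma\cdot\rho}=\Rc(\Sigma,*)$. Then
$$\XX(\Sigma)=q(\Rc(\Sigma,*))=q\bigl(\overline{\Gamma\cdot\rho}\bigr)\subseteq\overline{q(\Gamma\cdot\rho)}=\overline{\bar\Gamma\cdot q(\rho)},$$
so the orbit of $q(\rho)$ under $\bar\Gamma$ is Zariski dense.

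\textbf{Alternative via invariant functions.} One can also argue with Theorem 2.2 of \cite{KPS}. A $\bar\Gamma$-invariant rational function $f$ on $\XX(\Sigma)$ pulls back to $f\circ q$, which is $\Gamma$-invariant on $\Rc(\Sigma,*)$ and hence constant. Since $q$ is dominant, $f$ itself is constant.

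\textbf{Main obstacle.} The only real subtlety is that $\Rc(\Sigma,*)$ and $\XX(\Sigma)$ may be reducible. In that case "Zariski dense orbit" should be read componentwise, or the argument restricted to the relevant irreducible component. Because $q$ is surjective and maps components onto their images, the argument of Step 2 goes through unchanged component by component.
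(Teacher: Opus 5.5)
Your proposal is correct, and your main argument differs mildly from the paper's.

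\textbf{The paper's proof} is exactly your ``Alternative via invariant functions'' paragraph. It pulls back a $\bar\Gamma$-invariant rational function $f$ on $\XX(\Sigma)$ along the quotient map, observes that $f\circ p$ is $\Gamma$-invariant and hence constant, and concludes that $f$ is constant. This tacitly uses Theorem 2.2 of \cite{KPS} at both ends, together with the dominance of $p$.

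\textbf{Your Step 2} is the dual, direct version. You push the dense orbit forward through the continuous, surjective, equivariant map $q$, using $q(\overline{A})\subseteq\overline{q(A)}$. This gains you two things:
\begin{enumerate}
\item It needs neither Theorem 2.2 of \cite{KPS} nor function fields. Step 2 only uses that $q$ is equivariant and continuous with dense image, so it works verbatim on a reducible $\Rc(\Sigma,*)$. Your ``main obstacle'' paragraph therefore really concerns only the function-field version. The caveat is not idle: for non-simply-connected $G$ the representation variety has several components, each preserved by the mapping class group.
\item You make explicit the equivariance and the descent of the action to $\Mod(\Sigma)$, namely that point-pushing acts by inner automorphisms. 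The paper uses this without comment.
\end{enumerate}

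\textbf{What the paper's formulation gains} is uniformity with the rest of the argument. Closures $\overline{\Gamma_0}$ are defined through invariant functions throughout. More importantly, the same pullback argument transfers word for word to the ergodic Lemma~\ref{rempuncterg}, with invariant $L^2$ functions in place of rational ones. Your Step 2 has no ergodic analogue, because a dense orbit does not imply ergodicity.
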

\begin{proof}
    Let $p: \Rc(\Sigma,*) \to \XX(\Sigma)$ denote the projection. Let $f \in \CC(\XX(\Sigma))$ be a rational function invariant under the image of $\Gamma.$ Then $f \circ p$ is a $\Gamma$ invariant rational function on $\Rc(\Sigma, *)$ and hence is constant. Thus $f$ is also constant. This establishes the lemma.
\end{proof}

\begin{lemma}\label{rempuncterg}
    Let $\Sigma$ be a closed surface, with a basepoint $*$. Let $\Gamma \subseteq \Mod(\Sigma,*) $ be a group that acts with Zariski dense orbits on $\Rc_K(\Sigma, *).$ Then the image of $\Gamma$ in $\Mod(\Sigma)$ acts ergodically on $\XX_K(\Sigma).$
\end{lemma}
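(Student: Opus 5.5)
The plan mirrors the proof of Lemma \ref{rempunct}: pull invariant functions back along the projection $p:\Rc_K(\Sigma,*)\to \XX_K(\Sigma)$ and use the hypothesis on $\Rc_K(\Sigma,*)$. First I fix how the hypothesis is read. For $K$-representation spaces the paper uses ``dense orbits'' in the sense of its ergodic dictionary: by the remark after Theorem 2.2 of \cite{KPS}, and as in Lemma \ref{changeofclosureerg} and the ergodic version of Lemma \ref{indsamebound}, the property is detected by the class of invariant functions. In the algebraic setting that class is $\CC(\,\cdot\,)$; in the compact setting it is $L^2$. So I will use the hypothesis in the form: every $\Gamma$-invariant function in $L^2(\Rc_K(\Sigma,*))$ is a.e.\ constant, for the natural measure on $\Rc_K(\Sigma,*)$ described next.

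Key steps, in order. (1) Fix measures. On $\Rc_K(\Sigma,*)$, take the measure obtained by viewing $\Rc_K(\Sigma,*)$ as the fiber over $1$ of the map $K^{2g}\to K$, $(a_i,b_i)\mapsto\prod[a_i,b_i]$, and disintegrating Haar measure on $K^{2g}$. On the irreducible locus, which has full measure, this map is a submersion. The measure is $\Mod(\Sigma,*)$-invariant, since mapping classes act by automorphisms of $\pi_1$ that preserve the relation. (2) Show that $p_*$ of this measure is equivalent to the Goldman symplectic measure on $\XX_K(\Sigma)$. This holds because over the irreducible locus $p$ is a principal $K/Z(K)$-bundle, and the fiber measure descends to the Liouville measure of the Goldman form; this is the standard computation going back to Goldman and used in \cite{PX}. (3) Let $f\in L^2(\XX_K(\Sigma))$ be invariant under the image of $\Gamma$. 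By step (2), $f\circ p$ lies in $L^2(\Rc_K(\Sigma,*))$. It is $\Gamma$-invariant because $p$ is $\Mod(\Sigma,*)$-equivariant and inner automorphisms act trivially on $\XX_K(\Sigma)$. (4) By the hypothesis, $f\circ p$ is a.e.\ constant. Since $p_*\mu$ is equivalent to the Goldman measure, $f$ is a.e.\ constant, so the image of $\Gamma$ acts ergodically on $\XX_K(\Sigma)$.

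The main obstacle is step (2). The formal part, steps (3) and (4), is exactly as in Lemma \ref{rempunct}. What is needed in addition is that the pullback $f\mapsto f\circ p$ sends $L^2$ to $L^2$ and sends null sets to null sets in both directions. I would handle this by restricting to the open dense irreducible locus, where $p$ is a smooth fiber bundle with compact fibers, and then checking that the reducible locus has measure zero for both measures. The latter holds because it has positive codimension when $g\ge 2$.
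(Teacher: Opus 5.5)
Your proposal is correct and is the paper's argument: the paper proves this lemma by running the proof of Lemma \ref{rempunct} ``mutatis mutandis,'' i.e.\ pulling a $\Gamma$-invariant function on $\XX_K(\Sigma)$ back along $p$ and invoking ergodicity on $\Rc_K(\Sigma,*)$. You are right to read the hypothesis ``Zariski dense orbits'' as ergodicity, and your steps (1)--(2) make explicit the measure-theoretic input the paper leaves implicit: $p_*$ of the invariant measure on $\Rc_K(\Sigma,*)$ is a constant multiple of the Goldman measure, so pullback preserves $L^2$ and null sets (with only mutual absolute continuity you would instead run the argument with invariant sets or $L^\infty$ functions).
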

\begin{proof}
    This follows mutatis mutandis from the proof of the previous lemma.
\end{proof}

\begin{lemma}\label{wholemcg}
    Let $\Sigma$ be a closed surface with base point $*$. The action of $\Mod(\Sigma)$ on $\XX_K(\Sigma)$ is ergodic and the action of $\Mod(\Sigma)$ on $\XX(\Sigma)$ has Zariski dense orbits.
\end{lemma}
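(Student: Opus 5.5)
The plan is to reduce to the once-bounded case and then descend from based representations to the character variety. Cut $\Sigma$ open at a small disk around $*$: let $\Sigma^\circ = \Sigma \setminus \mathrm{int}(D)$, a surface with one boundary component $b$, and move the basepoint onto $b$. Then $\pi_1(\bar{\Sigma^\circ},*) = \pi_1(\Sigma,*)$, so
$$\Rc_1(\Sigma^\circ,*) = \Rc(\Sigma,*), \qquad \Rc_{1,K}(\Sigma^\circ,*) = \Rc_K(\Sigma,*).$$
The image of $\Mod(\Sigma^\circ)$ in $\Mod(\Sigma,*)$ acts on these compatibly with the inclusions.

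The first step applies the immediately preceding lemma (parts (2) and (4)). It says $\Mod(\Sigma,*)$ acts with Zariski dense orbits on $\Rc(\Sigma,*)$ and ergodically on $\Rc_K(\Sigma,*)$. That lemma rests on Lemmas \ref{framedense} and \ref{framedenseerg}, applied to a framed subgroup $\Mod(\Sigma^\circ)[\phi] \subseteq \Mod(\Sigma^\circ)$ with trivial boundary monodromy. If the genus hypotheses there (genus $\ge 4$ in the algebraic case, $\ge 2$ in the ergodic case) are needed, I would note that the statement is intended under those hypotheses. For small genus I would instead run the induction of Lemmas \ref{indtor} and \ref{glutorerg} directly with the full $\Mod(\Sigma^\circ)$, starting from Lemmas \ref{Basealg} and \ref{basePX} on a one-holed torus. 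This works because no framing constraint is then needed on $\alpha_i,\beta_i$.

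The second step descends to the character variety. In the algebraic case, Lemma \ref{rempunct} applied with $\Gamma = \Mod(\Sigma,*)$ shows that its image, which is all of $\Mod(\Sigma)$ by the Birman exact sequence, has no nonconstant invariant rational functions on $\XX(\Sigma)$. Then Theorem 2.2 of \cite{KPS} gives a Zariski dense orbit. In the ergodic case, Lemma \ref{rempuncterg} (interpreted with ergodicity as the hypothesis) gives ergodicity on $\XX_K(\Sigma)$. The measure here is the pushforward of the Haar-type measure on $\Rc_K(\Sigma,*)$. An invariant $L^2$ function $f$ on $\XX_K(\Sigma)$ pulls back to an invariant $L^2$ function on $\Rc_K(\Sigma,*)$, hence is a.e. constant.

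The main obstacle is this last point: comparing the measure on $\XX_K(\Sigma)$ with the Goldman symplectic measure. I would use that on the irreducible locus, which has full measure, the symplectic volume agrees up to a smooth positive density with the pushforward of the natural measure on $\Rc_K(\Sigma,*)$. This is the standard Goldman / Witten volume computation. It follows that null sets and invariant functions match, which is all ergodicity requires.
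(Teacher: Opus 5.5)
Your proposal is correct and is essentially the paper's proof. The paper likewise takes Zariski density and ergodicity of $\Mod(\Sigma,*)$ on $\Rc(\Sigma,*)$ and $\Rc_K(\Sigma,*)$ from the preceding lemma, then descends to $\XX(\Sigma)$ and $\XX_K(\Sigma)$ via Lemmas \ref{rempunct} and \ref{rempuncterg}, using that $\Mod(\Sigma,*)\to\Mod(\Sigma)$ is onto. Your additional care only makes explicit what the paper leaves implicit: reading the hypothesis of Lemma \ref{rempuncterg} as ergodicity, flagging the genus requirements, and matching the pushforward measure class with the Goldman measure. One caveat: your small-genus workaround reaches genera $2$ and $3$ but not the closed torus, since Lemma \ref{Basealg} gives only general, not trivial, boundary monodromy.
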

\begin{proof}
    The preimage of $\Mod(\Sigma)$ in $\Mod(\Sigma, *)$ is all of $\Mod(\Sigma, *),$ and $\Mod(\Sigma,*)$ acts with Zariski dense orbits on $\Rc(\Sigma, *)$ and acts ergodically on $\Rc_K(\Sigma, *).$  Thus by Lemmas \ref{rempunct} and \ref{rempuncterg} the result folows.
\end{proof}
\section{Producing Dehn twists}

Let us recall some of the techniques used in \cite{BS}  to understand the monodromy groups associated to complete linear systems. While in that paper the authors were dealing with curves in a simply connected surface $X$, many of the ideas and constructions involved do not require that hypothesis.
\subsection{Set up}
Let $X$ be an algebraic surface. Let $\Lc$ be a very ample line bundle on $X$. Let $d >0$ be a positive integer. Let $$U(X, \Lc^d)  = \{f \in H^0(X, \Lc^d)| Z(f) \textrm{ is smooth}\}.$$ Let $$E(X, \Lc^d) =\{ (x,f) \in X \times U(X, \Lc^d)| f(x) =0\}.$$The map $E(X ,\Lc^d) \to U(X, \Lc^d)$ is a fiber bundle. We will denote the monodromy group of this fiber bundle by $\Gamma$.

Strictly speaking, to even discuss the monodromy group of a fiber bundle we need a base point. We will pick a very special base point to help our computations. Let $d_1, d_2$ be positive integers such that $d_1 + d_2 = d.$ Let $f \in U(X, \Lc^d_1)$ and $g \in U(X, \Lc^{d_2})$ be such that $C = Z(f)  $ and $D = Z(g)$ intersect transversely. In this situation $Z(fg) = Z(f) \cup Z(g) = C \cup D$ is the union of two algebraic curves meeting transversely. This union is not smooth however, it has nodal singularities at each point of $C \cup D.$ We then consider a small perturbation of $fg$, of the form $fg + \epsilon h,$ where $\epsilon>0$ is suitably small and $h$ is suitably generic. We let $E = Z(fg + \epsilon h).$ It is this curve $E$ that will be our base point, and $\Gamma$ will be a subgroup of $\Mod(E).$

Let $\tilde C, \tilde D$ denote the real oriented blow ups of $C$ and $D$ along the locus $C \cap D.$

In \cite{BS}, the authors prove that the curve $E$ is homeomorphic to the Riemann surface formed by gluing $\tilde C $ and $\tilde D$ along the boundary circles formed in the blowup. This homeomorphism is also canonical up to a mild ambiguity. The reader is advised to see Section 3.4 of \cite{BS} for more details and several helpful figures. 

\subsection{Deforming Singularities}
Let us now describe some techniques used by the authors of \cite{BS} to produce Dehn twists in $\Gamma.$ Let us first recall the notion of the vanishing cycle associated to a nodal degeneration. Let $p:\CCa \to \DD$ denote a proper homolomorphic family of curves over the unit disk. Let us assume that $\CCa$ is smooth and that the map $p$ is a submersion at all but one point $x$, where $p(x) =0$. Let $\CCa^* \to \DD \setminus\{0\}$ denote the pull back of $p$ to $\DD \setminus \{0\},$ it is a fiber bundle by Ehresmann's theorem. 

Assume that the fiber over $0$, $\CCa_0$  is a nodal curve with a single node at $x$. Let $t \neq 0.$  In this situation, the nodal fiber $\CCa_0$ is homeomorphic to a quotient of a smooth fiber, $\CCa_t$ after we collapse a certain simple closed curve $\alpha$ in $\CCa_t$ to a point . This curve $\alpha$ is called the \emph{vanishing cycle} associated to the nodal degeneration. Let us furthermore assume that the map $p$ defines a Lefschetz fibration. In this case the monodromy  map $\pi_1(\DD \setminus\{0\},t) \to \Mod(\CCa_t)$ is precisely the map sending the generator of $\pi_1(\DD \setminus\{0\},t)$ to $T_{\alpha}$. 

Similar statements to the above hold for more complicated singularities. Let  $\CCa \to B$ be a family of curves and let $* \in B$ be a base point. Assume that $\CCa_*$ has an isolated singularity at a point $x$ and that the family is a versal deformation space for this singularity. In this case, let $B_0 \subseteq B$ denote the locus where the fibers are smooth. We again have a monodromy map $\pi_1(B_0, *) \to \Mod(\CCa_*).$ The image of this map has beens studied \cite{PCS} and is precisely a framed mapping class group of an embedding of a versal deformation of the singularity.

In \cite{BS} the authors consider various controlled degenerations of $E$ to singular curves, argue that $\Gamma$ contain the associated monodromy groups and then use various techniques to argue that $\Gamma$ is a finite index subgroup of $\Mod(E).$
\subsection{Tacnodal degenerations and vanishing cycles}
An important class of degenerations that we will consider is a tacnodal degeneration. Our base curve $E$ is obtained as a small perturbation of the union of two curves $C$ and $D$ meeting transversally. Let us consider a family of curves $C_t$ (for $t \in \DD$) , such that:
\begin{enumerate}
    \item $C_1 =C$, our original curve.
    \item For $t \neq 0,$ $C_t$ intersects $D$ transversely.
    \item The intersection of $C_0$ and $D$ is transverse except at one point $x$, where the intersection is of order $2.$
\end{enumerate}

The singularity of $C_0 \cup D$ at $x$ is called a tacnode. One can perturb the above family to get a degeneration of $E$ to a tacnodal curve. This degeneration in turn gives us certain elements in $\Gamma$ that are useful for monodromy computations.

\begin{definition}[Curve of tacnodal type]
    Let $\alpha \subseteq E$. We say $\alpha$ is of tacnodal type if $\alpha \cap \tilde C$ and $\alpha \cap \tilde D$ are single arcs and the end points of these arcs are on different boundary components of $\tilde C$ and $\tilde D$ respectively.
\end{definition}

\begin{definition} [Tacnodal vanishing cycle]
    Let $\alpha \subseteq E$ be a simple closed curve. We say that $\alpha$ is a tacnodal vanishing cycle if $T_{\alpha} \in \Gamma $ and $\alpha$ is of tacnodal type.
\end{definition}

\subsection{Fixing the right hand side of the tacnodal degeneration}
For the rest of this section we adopt the following conventions.
    \begin{enumerate}
        \item Let $\overline{\Gamma}^{K}$ denote the closure of $\Gamma$ with respect to the action of $\Mod(E)$ on $\XX_K(E)$ (a measure preserving action)
        \item Let $\overline{\Gamma}$ denote the closure of $\Gamma$ with respect to the action of $\Mod(E)$ on $\XX_G(E)$ (an algebraic action).
    \end{enumerate}

Let us first recall the Birman exact sequence as it would be helpful for our understanding of some of the subsequent material. Let $\tilde \Sigma$ be a surface with $n$ boundary components. Let $\Sigma_0$ be the punctured surface obtained by filling in the boundary components with punctured disks. Let $\Sigma$ denote the closed surface obtained by filling in these punctures.

We have surjective maps $\Mod(\tilde \Sigma) \to \Mod(\Sigma_0)$ and $\Mod(\Sigma_0) \to \Mod(\Sigma).$ Let us describe the kernels of these maps. The kernel of the map $\Mod(\Sigma_0) \to \Mod(\Sigma)$ is the \emph{point pushing subgroup} to be precise, we have a short exact sequence
$$1 \to Br_n(\Sigma) \to \Mod(\Sigma_0) \to \Mod(\Sigma) \to 1,$$ where $Br_n(\Sigma)$ is the $n$ stranded braid group on the surface $\Sigma$ and the map $Br_n(\Sigma) \to \Mod(\Sigma_0)$ is given by point pushing.
we also have a short exact sequence $$1 \to \ZZ^n \to \Mod(\tilde \Sigma) \to \Mod(\Sigma_0) \to 1,$$ here the map $\ZZ^n \to \Mod(\tilde \Sigma) $ is given by sending the generators of $\ZZ^n$ to Dehn twists about the boundary components. We define $Br(\tilde \Sigma)$ to be the kernel of $\Mod(\tilde \Sigma) \to \Mod(\Sigma).$

We note that we have a surjective map $Br_n(\Sigma) \to S_n$ (the symmetric group on $n$ letters) given by following the punctures to their destinations. We call the kernel of this map $PBr_n(\Sigma),$ the pure $n-$stranded surface braid group.

Let $M_1 \subseteq \Mod(E)$ denote the subgroup of $\Mod(E)$ that preserves $\partial \tilde C  = \partial \tilde D,$ as a set. We have a morphism $M_1 \to \Mod(C) \times \Mod(D),$ let $M$ denote the kernel of this map. The group $M$ in turn admits a map to $Br(\tilde C).$ 

Recall that if $\tilde C$ has $n$ boundary components then we have a surjective map $Br(\tilde C) \to Br_n(C)$. We further have a map $Br_n(C) \to H_1(C, \ZZ).$ This last map may be described geometrically as follows: A braid $\beta$ in $C$ traces out a set of loops in $C$, we define the image of $\beta$ to be the sum of the homology classes of these loops. The kernel of this map $Br_n(C) \to H_1(C,\ZZ)$ is called the \emph{simple braid group}. We will denote it by $SBr_n(C).$ We will use $SBr(\tilde C)$ to denote its preimage in $Br(\tilde C).$  We define $$PSBr_n(C) = SBr_n(C) \cap PBr_n(C).$$ We will call its preimage $PSBr(\tilde C).$

\begin{lemma}\label{imageSBr}
    The image of $\Gamma \cap M \to Br(\tilde C)$ contains $SBr(\tilde C)$.
\end{lemma}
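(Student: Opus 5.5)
We produce elements of $\Gamma \cap M$ by moving the curve $C$ in its own linear system while keeping $D$ fixed, and then show these elements generate a group whose image in $Br(\tilde C)$ contains $SBr(\tilde C)$. The setup is the one of \cite{BS}: $C = Z(f)$ varies in the family of smooth curves $Z(f_t)$, $f_t \in U(X, \Lc^{d_1})$, transverse to $D = Z(g)$.

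The first step is a family argument. Along any loop $f_t$ in this space, perturb $f_t g + \epsilon h$ to get a loop in $U(X, \Lc^d)$ based at $E$. Its monodromy preserves $\partial \tilde C = \partial \tilde D$. The induced mapping class on $D$ is trivial, since $D$ is fixed. On $C$, the induced class is the monodromy of the family $Z(f_t)$, which is isotopically trivial if we choose loops along which $C$ returns to itself with trivial monodromy in $\Mod(C)$. So such loops give elements of $M$. Their image in $Br(\tilde C)$ is the braid traced out by the points of $C_t \cap D$. Identifying $C_t \cap D$ with the zero set of the restriction $f_t|_D$, a section of $\Lc^{d_1}|_D$, we may equivalently fix $C$ and move the $n$ points inside $C$ via the correspondence. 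The braids we obtain are exactly the image of $\pi_1$ of the space of sections of $\Lc^{d_1}|_D$ vanishing transversely, restricted to those sections coming from $X$. Instead of restricting to $D$, it is more direct to fix $C$ and vary $D$ in $|\Lc^{d_2}|$: the braids in $C$ are then the monodromy of the divisor $D\cap C \in |\Lc^{d_2}|_C|$. Since $d_2$ is large (from $k \ge 7$), restriction $H^0(X,\Lc^{d_2}) \to H^0(C,\Lc^{d_2}|_C)$ is surjective. So we get the full image of $\pi_1$ of the space of reduced divisors in the complete linear system $|\Lc^{d_2}|_C|$ on $C$ in $Br_n(C)$.

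The second step identifies that image. We use the standard fact, from the ingredients of \cite{BS} adapted here, that for a line bundle of large degree on $C$ the monodromy of reduced divisors in a complete linear system is exactly the kernel of the Abel–Jacobi map $Br_n(C) \to H_1(C,\ZZ)$, namely $SBr_n(C)$. Indeed, all divisors in the system are linearly equivalent, so the sum of the traced loops is null-homologous. The converse containment is proved by exhibiting the generators. Half-twists come from nodal degenerations where two points of $C \cap D$ collide, which is a tangency of $D$ to $C$. Simple loops of pairs of points moving in opposite homology directions come from more general degenerations. One then checks these generate $SBr_n(C)$, using the presentation of surface braid groups and the genericity afforded by very ampleness.

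The third step lifts from $Br_n(C)$ to $Br(\tilde C)$. The kernel $\ZZ^n$ of $Br(\tilde C) \to Br_n(C)$ is generated by boundary twists. A boundary circle of $\tilde C$ at a node of $C \cup D$ is the vanishing cycle of that node under smoothing, and its twist is the monodromy of the loop $\epsilon \mapsto e^{i\theta}\epsilon$ localized at that node. This is realized by a Lefschetz-type local smoothing, as in the vanishing cycle discussion above. That twist lies in $M$ and maps to the corresponding boundary twist, so $\ZZ^n$ lies in the image and $SBr(\tilde C)$ is contained in the image.

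I expect the main obstacle to be the second step: proving the braid monodromy of the linear system on $C$ is all of $SBr_n(C)$ rather than a finite-index subgroup. That is, we need enough degenerations, simple tangencies plus the pair-swapping moves across handles. I would first try to generate $SBr_n(C)$ by half-twists together with the "pushing a pair along $\gamma$ and $\gamma^{-1}$" elements, and realize each of these via the tacnodal-type moves. Surjectivity of restriction guarantees the necessary local configurations exist.
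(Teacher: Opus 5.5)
There is a genuine gap in your first step: the elements you end up using are never shown to lie in $M$. You argue membership in $M$ for loops in which $D$ is fixed and $C$ moves. In that set-up, however, the points $C_t\cap D$ trace a braid on $D$, so they describe the $\tilde D$-component rather than the $\tilde C$-component, and nothing justifies ``equivalently'' transporting them to $C$.

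You then switch to fixing $C$ and moving $D$. This is the right set-up for braids on $C$, but you do not redo the membership check. There the $\Mod(D)$-component of the monodromy of $fg_t+\epsilon h$ is the monodromy of the family $D_t=Z(g_t)$. For a general loop in the space $U_C$ of smooth $D'$ transverse to $C$, this component is nontrivial. For example, a loop around a nodal $D'$ whose node lies off $C$ gives a Dehn twist on $D$ and a trivial braid on $C$. So loops in $U_C$ only give elements of $\Gamma\cap M_1$. The claim that the whole braid monodromy image of $\pi_1(U_C)$ lies in the image of $\Gamma\cap M$ does not follow, and your Step 2 computes the wrong group.

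\textbf{How to repair it.} Use only loops in the kernel of $\pi_1(U_C)\to\pi_1(U(X,\Lc^{d_2}))$; these have trivial $\Mod(D)$-component.
\begin{enumerate}
\item This kernel is normally generated by meridians of the tangency hypersurface. Realise such a meridian as $fg_t+\epsilon h$ on a circle $|t|=r$ with $\epsilon\ll r$, so that the curves stay smooth; it maps to a half-twist $\sigma\in Br_n(C)$.
\item Hence the image of $\Gamma\cap M$ in $Br_n(C)$ contains the normal closure of $\sigma$ in the braid monodromy group $B$.
\item You need $B=SBr_n(C)$. Rather than exhibiting generators, use the Abel--Jacobi map $\mathrm{UConf}_n(C)\to\mathrm{Pic}^n(C)$. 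For $n$ large relative to $g(C)$ it is a smooth surjection with connected fibres $|L|\setminus\Delta_L$. This gives exactness of $\pi_1(|L|\setminus\Delta_L)\to Br_n(C)\to H_1(C;\ZZ)\to 1$.
\item You also need the normal closure of $\sigma$ in $SBr_n(C)$ to be all of $SBr_n(C)$. First, $Br_n(C)/\langle\langle\sigma\rangle\rangle=\pi_1(\mathrm{Sym}^n C)=H_1(C;\ZZ)$. Second, conjugation by $Br_n(C)$ can be realised inside $SBr_n(C)$, because the centraliser of $\sigma$ surjects onto $H_1(C;\ZZ)$ (push a third point). This is the same trick as in Lemma \ref{normgen}.
\end{enumerate}

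Your Step 3 is fine in spirit. Note, though, that $\epsilon\mapsto e^{i\theta}\epsilon$ gives the product of all $n$ boundary twists, not a single one. Individual twists come from the discriminant near $fg$ being a normal-crossings union of the $n$ loci where each node persists. This uses that $H^0(X,\Lc^{d})$ separates the nodes.

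For comparison, the paper itself does not give an argument here. It cites Proposition 7.1 of \cite{BSCI} and notes that simple connectivity of $X$ is not used there.
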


This is essentially Proposition 7.1 of \cite{BSCI}, though that is stated in  a somewhat limited context. This was used again in \cite{BS} in the more general context where our ambient surface $X$ is simply connected. However simply-connectedness is never used in the proof of this Lemma.

Let us mention a few consequences of this Lemma.

\begin{lemma}\label{normal}
    The group $\Gamma \cap \Mod(\tilde C)$ is normal under $SBr(\tilde C)$.
    The groups $\overline{\Gamma} \cap \Mod(\tilde C)$ and $\overline{\Gamma}^K \cap \Mod(\tilde C)$ are also normal under $SBr(\tilde C)$.
\end{lemma}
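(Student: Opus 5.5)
The plan is to exploit Lemma \ref{imageSBr}, which gives lifts to $\Gamma \cap M$ of every element of $SBr(\tilde C)$, together with the fact that $\Mod(\tilde C)$ (viewed inside $\Mod(E)$ as mapping classes supported on $\tilde C$) commutes with $\Mod(\tilde D)$.

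First I set up the action. Let $\beta \in SBr(\tilde C) \subseteq \Mod(\tilde C)$. By Lemma \ref{imageSBr} there is an element $\gamma \in \Gamma \cap M$ whose image in $Br(\tilde C)$ is $\beta$. The restriction of an element of $M$ to the $\tilde C$ side, after cutting $E$ along $\partial \tilde C$, determines the image in $Br(\tilde C)$. So I would write $\gamma = \beta \cdot \delta$, where $\delta$ is a mapping class supported on $\tilde D$. Boundary twists are the ambiguity here: Dehn twists about the common boundary curves can be absorbed into either factor, and I would fix the convention so that the decomposition holds in $\Mod(E)$.

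Next I check that $\delta$ commutes with $\Mod(\tilde C)$. It does, since the supports of $\delta$ and of any element of $\Mod(\tilde C)$ meet only along boundary circles, where the mapping classes are the identity. Hence, for $h \in \Gamma \cap \Mod(\tilde C)$,
$$\beta h \beta^{-1} = \beta\delta h \delta^{-1}\beta^{-1} = \gamma h \gamma^{-1} \in \Gamma,$$
and this element is again supported on $\tilde C$. This gives the first statement.

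For the closures, I first note the general fact that $\overline{\Gamma}$ is normalised by $\Gamma$. If $g \in \overline{\Gamma}$, $\gamma \in \Gamma$ and $f$ is $\Gamma$-invariant, then
$$f\circ(\gamma g\gamma^{-1}) = f\circ g\circ \gamma^{-1} = f\circ\gamma^{-1} = f.$$
The same argument works with $L^2$ functions for $\overline{\Gamma}^K$. Also $\overline{\Gamma}$ is a group, being an intersection of stabilisers of functions. So for $h \in \overline{\Gamma}\cap \Mod(\tilde C)$ we get $\beta h\beta^{-1} = \gamma h \gamma^{-1} \in \overline{\Gamma}$, and it lies in $\Mod(\tilde C)$. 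The identical computation handles $\overline{\Gamma}^K$.

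The main obstacle I expect is the decomposition $\gamma = \beta\delta$. Elements of $M$ act trivially on $\Mod(C)\times\Mod(D)$, but the map $M \to Br(\tilde C)$ must be checked to agree with the restriction to $\tilde C$, including the boundary twist ambiguity. I would handle this by cutting along the curves of $\partial \tilde C$: since elements of $M$ preserve this multicurve, they are products of mapping classes of $\tilde C$ and $\tilde D$ modulo boundary twists. Any leftover boundary twists can be placed in $\delta$, and they still commute with $\Mod(\tilde C)$.
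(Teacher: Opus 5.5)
Your overall strategy is the paper's. You lift $\beta\in SBr(\tilde C)$ to $\gamma\in\Gamma\cap M$ via Lemma \ref{imageSBr}, note that conjugating an $h$ supported on $\tilde C$ by $\gamma$ yields $\beta h\beta^{-1}$, and then use $\gamma\in\Gamma\subseteq\overline{\Gamma}$ (resp.\ $\overline{\Gamma}^K$) together with the fact that the closures are groups. Your separate check that $\Gamma$ normalises $\overline{\Gamma}$ is correct but is more than you need.

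The step that fails as written is the factorisation $\gamma=\beta\delta$ in $\Mod(E)$, with $\beta$ supported on $\tilde C$ and $\delta$ on $\tilde D$. The same goes for your justification that elements of $M$ are products of mapping classes of $\tilde C$ and $\tilde D$ modulo boundary twists. This is true only for elements of $M$ that fix each component of $\partial\tilde C$.

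However, $SBr(\tilde C)$ is the preimage of $SBr_n(C)$, which contains non-pure braids such as arc half twists, and the lemma is needed exactly for these. In Lemma \ref{contpsbr} it is applied to a half twist $\sigma$ swapping $\Delta_j$ and $\Delta_{j+1}$. For such a $\beta$, any lift $\gamma$ must also swap the corresponding boundary circles of $\tilde D$. So neither $\beta$ nor $\delta$ extends by the identity to a homeomorphism of $E$, and $\gamma=\beta\delta$ has no meaning in $\Mod(E)$. Your argument as written covers only pure braids.

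The fix is to argue by restriction rather than factorisation:
\begin{itemize}
\item Represent $\gamma$ by a homeomorphism $\Phi$ of $E$ preserving $\tilde C$ and $\tilde D$.
\item Represent $h$ by a homeomorphism $H$ that is the identity on $\tilde D$ and on a collar of $\partial\tilde C$.
\item Then $\Phi H\Phi^{-1}$ is the identity on $\Phi(\tilde D)=\tilde D$, so $\gamma h\gamma^{-1}$ is supported on $\tilde C$.
\item On $\tilde C$ it equals $\Phi|_{\tilde C}\,H\,\Phi|_{\tilde C}^{-1}$. Since $\Phi|_{\tilde C}$ represents the image $\beta$ of $\gamma$ in $Br(\tilde C)$ up to boundary twists, and those commute with $h$, this is $\beta h\beta^{-1}$.
\end{itemize}
This is the content of the paper's one-line identity $\tilde g h\tilde g^{-1}=ghg^{-1}$. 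With it in place of the factorisation, the rest of your argument, including the closure cases, goes through unchanged.
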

\begin{proof}
     Let $g \in SBr(\tilde C).$ Let $\tilde g \in \Gamma \cap M$ be an element mapping to $g$. Let $h \in  \Gamma \cap \Mod(\tilde C).$ We note that $\tilde g h \tilde g^{-1} = ghg^{-1},$ and that $\tilde g h \tilde g^{-1} \in \Gamma \cap \Mod(\tilde C).$ The same argument works for $\overline{\Gamma} \cap \Mod(\tilde C)$ and $\overline{\Gamma}^K \cap \Mod(\tilde C).$ This establishes the Lemma. 
\end{proof}
\begin{lemma}
    Let $a$ be an arc in $\tilde C$ joining two boundary componenets $\Delta_i$ and $\Delta_j$. Then there is a tacnodal vanishing cycle $\alpha$ such that $\alpha \cap \tilde C = a.$
\end{lemma}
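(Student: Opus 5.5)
The plan is first to produce one tacnodal vanishing cycle from an actual tacnodal degeneration, and then to move its $\tilde C$-arc to the given arc $a$ using the braid elements supplied by Lemma \ref{imageSBr}.

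\textbf{Step 1: one tacnodal vanishing cycle.} Choose two intersection points $p_i,p_j\in C\cap D$, corresponding to $\Delta_i,\Delta_j$. Take a one-parameter family $C_t$ in $|\Lc^{d_1}|$ with $C_1=C$ in which the points $p_i,p_j$ move along $D$, collide at a point $x$, and create a tangency there. Since $\Lc^{d_1}$ is very ample with $d_1$ large, we can prescribe the local behaviour of $C_t$ near $D$ and produce such a family, transverse to $D$ for $t\neq 0$.

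Perturbing $C_t\cup D$ as in the construction of $E$ gives a degeneration of $E$ to a curve with a single tacnode at $x$, and the nearby fibres are smooth curves in $U(X,\Lc^d)$. The versal deformation of a tacnode ($A_3$) contains nodal fibres. Their vanishing cycles, read through the identification of $E$ with $\tilde C\cup\tilde D$ from \cite{BS}, run once through the local picture near $x$. That is, they cross from $\Delta_i$ to $\Delta_j$ along an arc $a_0\subseteq\tilde C$ and return along an arc of $\tilde D$. A Lefschetz degeneration to such a nodal fibre puts $T_\alpha$ in $\Gamma$, so we obtain a tacnodal vanishing cycle $\alpha_0$ with $\alpha_0\cap\tilde C=a_0$. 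Here $a_0$ is an arc joining $\Delta_i$ and $\Delta_j$, namely the trace of the collision path of $p_i,p_j$ on $C$.

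\textbf{Step 2: moving the arc.} Let $a$ be the given arc from $\Delta_i$ to $\Delta_j$. We look for $g\in SBr(\tilde C)$ with $g(a_0)=a$, up to isotopy rel boundary of $\tilde C$. By Lemma \ref{imageSBr}, $g$ lifts to some $\tilde g\in\Gamma\cap M$. Then $T_{\tilde g(\alpha_0)}=\tilde g T_{\alpha_0}\tilde g^{-1}\in\Gamma$. Elements of $M$ act trivially on $\tilde D$ up to the ambiguity inside $Br(\tilde D)$, and act on $\tilde C$ as $g$. Hence $\tilde g(\alpha_0)\cap\tilde C=a$, and $\tilde g(\alpha_0)\cap\tilde D$ is still a single arc joining the corresponding two distinct boundary components. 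So $\tilde g(\alpha_0)$ is the required tacnodal vanishing cycle.

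\textbf{Main obstacle: the simplicity constraint.} The hard point is showing that $SBr(\tilde C)$ acts transitively on isotopy classes of arcs joining $\Delta_i$ to $\Delta_j$. The full braid group $Br(\tilde C)$ does act transitively, by the change of coordinates principle applied to arcs between two boundary components. The difficulty is that the homological constraint defining $SBr$ might obstruct this. I would handle it as follows. Given $h\in Br(\tilde C)$ with $h(a_0)=a$, compose it with braids supported away from $a$ that push a third puncture $p_k$ around a loop $\gamma$. Such a braid has homology class $-[\gamma]$, which can be arbitrary once $C$ has positive genus, so the correction fixes $a$ and cancels the class of $h$ in $H_1(C,\ZZ)$.

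If necessary, one can also use braids along $a_0$ itself: pushing $p_i$ around a loop and then back, or a Dehn twist about a neighbourhood of $a_0$, has controllable homology. This gives a simple braid carrying $a_0$ to $a$. The boundary-twist ambiguity in $Br(\tilde C)\to Br_n(C)$ does not affect arcs up to isotopy allowing the endpoints to slide along the boundary, so it is harmless.
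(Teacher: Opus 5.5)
Your proposal is correct and follows the paper's proof: start from one tacnodal vanishing cycle $\alpha_0$ between $\Delta_i$ and $\Delta_j$, choose $g\in SBr(\tilde C)$ with $g(a_0)=a$, lift it to $\tilde g\in\Gamma\cap M$ via Lemma \ref{imageSBr}, and take $\alpha=\tilde g(\alpha_0)$. The paper simply assumes that $\alpha_0$ exists and that $SBr(\tilde C)$ acts transitively on such arcs. Your justification of the transitivity (correcting the homology class by pushing a third puncture around loops that avoid $a$) is the same argument the paper gives later in the proof of Lemma \ref{fullclosure}, namely that the stabiliser of the arc surjects onto $H_1(C,\ZZ)$.
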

\begin{proof}
    Let $\alpha_0$ be any tacnodal vanishing cycle going between $\Delta_i$ and $\Delta_j$. Let $a_0 = \alpha_0 \cap \tilde C$. We note that there is some $g \in SBr(\tilde C)$ such that $g \cdot a_0 = a.$ Let $ \tilde g \in \Gamma \cap M$ be an element that maps to $g$ under the obvious map. Then $ \alpha =\tilde g \cdot \alpha_0$ is a tacnodal vanishing cycle. Furthermore $ \alpha \cap \tilde C = g \cdot a_0 =a$.
\end{proof}
We would like to emphasise that if $\pi_1X \neq 0$, it is not actually possible for every curve of tacnodal type to be a tacnodal vanishing cycle. Indeed if  the simple closed curve $\alpha \subseteq E$ is not nullhomotopic in $X$, then it can not be a vanishing cycle. 

\begin{lemma}
    The monodromy group $\Gamma$ contains a framed mapping class group of a subsurface $\Sigma_0$ with one boundary component such that:
    \begin{enumerate}
        \item $\Sigma_0$ has genus $\ge 4$.
        \item $\Sigma_0$ contains three of the boundary components of $\tilde C$ (note that these won't be boundary components of $\Sigma_0$).
        \item $\Sigma_0$ contains a  loop $\alpha$ obtained by performing an   arc connect sum of two of the boundary components $\Delta_i$ and $\Delta_j$ along an arc entirely in $\tilde C.$

    \end{enumerate}
  Furthermore, $\overline{\Gamma}$ and $\overline{\Gamma}^K$ contain $T_{\alpha}.$
\end{lemma}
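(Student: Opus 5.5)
We will obtain the framed mapping class group from a versal deformation of a suitably complicated isolated plane curve singularity appearing in the linear system, exactly as in \cite{BS}. Since $k \ge 7$, the line bundle $\Lc^d$ is ample enough to realise, in the degeneration $C \cup D$ picture, a local singularity of type $A_n$ or $E$-type (e.g.\ a high-order tangency between $C_0$ and $D$, or a singularity obtained by making $C_t$ osculate $D$ to order $m$ at a point, giving an $A_{2m-1}$ singularity). The versality statement for such a singularity in $|kD|$ follows from the jet-separation that $\Lc^d$ enjoys for $d$ large, and this uses only local ampleness, never $\pi_1 X = 0$. By the result of \cite{PCS} recalled above, the local monodromy of a versal deformation is the framed mapping class group $\Mod(\Sigma_0)[\phi]$ of the Milnor fiber $\Sigma_0 \subseteq E$, which has one boundary component when the singularity is unibranch or arranged appropriately. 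So $\Gamma$ contains this group.

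The next step is to choose the singularity so that conditions (1)--(3) hold. An order-$m$ tangency of $C_0$ with $D$ has Milnor fiber of genus roughly $(m-1)/2$ containing the $m$ boundary circles of $\tilde C$ that collapse at the tangency point, together with the arcs of $\tilde C$ between them. Taking $m$ large enough gives genus $\ge 4$ and at least three boundary components $\Delta_i$ of $\tilde C$ inside $\Sigma_0$; making the Milnor fiber have a single boundary component may require taking a unibranch singularity or passing to a one-boundary subsurface carrying the restricted framing. Inside this local picture, the arc connect sum of $\Delta_i$ and $\Delta_j$ along an arc in $\tilde C \cap \Sigma_0$ is a simple closed curve $\alpha \subseteq \Sigma_0$. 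This gives (3).

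For the final claim, the aim is to show that $T_\alpha$ lies in the closures. By the framed analogue of Lemma \ref{framedense}, with genus $\ge 4$, the group $\Mod(\Sigma_0)[\phi] \subseteq \Gamma$ acts with Zariski dense orbits on $\Rc_g(\Sigma_0,*)$ for general $g$. By Lemma \ref{framedenseerg} it acts ergodically on the $K$-version. Lemma \ref{changeofclosurealg} and Lemma \ref{changeofclosureerg} then give $\Mod(\Sigma_0) \subseteq \overline{\Gamma}$ and $\Mod(\Sigma_0) \subseteq \overline{\Gamma}^K$. This requires the complement of $\Sigma_0$ to have genus $\ge 2$, which holds because $E$ has large genus. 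In particular $T_\alpha \in \Mod(\Sigma_0)$ lies in both closures, even though $\phi(\alpha)$ need not vanish. One subtlety remains: these lemmas are stated for representation varieties with a basepoint. We therefore work in $\Mod(E,*)$ and pass down to $\XX(E)$ via Lemma \ref{rempunct} and Lemma \ref{rempuncterg}, or simply note that invariant functions on $\XX$ pull back to invariant functions on $\Rc$.

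The main obstacle is the first step: verifying that the chosen singularity actually occurs in $|kD|$ with a versal family for $k \ge 7$, and identifying its Milnor fiber inside the $\tilde C \cup \tilde D$ decomposition so that it visibly contains three $\Delta_i$ and the curve $\alpha$. I would first try to import the corresponding construction from \cite{BS}, checking line by line that it only uses very ampleness and $k \ge 7$.
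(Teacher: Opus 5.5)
\textbf{The gap is in the first half of your argument, which is the main content of the statement.} Your second half (Lemma \ref{framedense} and its ergodic analogue, then Lemma \ref{changeofclosurealg} using that the complement of $\Sigma_0$ has genus $\ge 2$, then pulling invariant functions back from $\XX(E)$ to $\Rc(E,*)$) is the paper's argument, made more explicit.

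The singularity you propose is an order-$m$ tangency $y(y-x^m)=0$ between $C_0$ and $D$. This is of type $A_{2m-1}$. The theorem of \cite{PCS} explicitly excludes the $A_n$ and $D_n$ families, and it also assumes Milnor fibre genus $\ge 5$; the paper's one-line recap of \cite{PCS} in Section 5 drops these hypotheses. For $A_n$, the geometric monodromy of the versal deformation is the Birman--Hilden image of the braid group $B_{n+1}$. The Milnor fibre is the double cover $u^2=q(x)$ of a disc, and every monodromy element commutes with the deck involution $u\mapsto -u$. That group is not a framed mapping class group, so your construction does not show that $\Gamma$ contains one.

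The ``$E$-type'' alternative is not carried out. $E_6,E_7,E_8$ have Milnor fibres of genus $3,3,4$, below the threshold of \cite{PCS}. You exhibit no non-$A$/$D$ singularity arising from a degeneration of $C\cup D$ whose Milnor fibre contains three necks $\Delta_i$, which you yourself flag as the main obstacle. Also, the $A_{2m-1}$ Milnor fibre has genus $m-1$ and two boundary components, not genus roughly $(m-1)/2$.

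The paper does not build a singularity at all. It invokes Lemma 6.1 of \cite{BS}, which produces a core subsurface $\Sigma'$ with $\Mod(\Sigma')[\phi]\subseteq\Gamma$ by an argument that never uses $\pi_1X=0$. It then takes a one-boundary subsurface $\Sigma_0\subseteq\Sigma'$ with the required properties, using $\Mod(\Sigma_0)[\phi|_{\Sigma_0}]\subseteq\Mod(\Sigma')[\phi]$.

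Your tangency picture is not useless. Assuming it is realised versally, its monodromy contains the twists about the chain $\Delta_1,\tau_{12},\Delta_2,\tau_{23},\dots$, where $\tau_{k,k+1}$ is tacnodal. Iterating Lemma \ref{indtor} from Lemma \ref{Basealg}, with Lemma \ref{changeofclosurealg} at each step, would then still put $\Mod$ of the chain neighbourhood, and hence $T_\alpha$, into both closures. But that proves only the final sentence of the statement, not that $\Gamma$ itself contains a framed mapping class group.
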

\begin{proof}
    It suffices to establish that $\Gamma$ contains a framed mapping class group of a subsurface $\Sigma'$ that contains a subsurface $\Sigma_0$ of the desired type. However this follows from Lemma 6.1 of \cite{BS} where the authors construct a 'core subsurface' that satisfies the conditions we desire for $\Sigma'$ (that section also does not use simply connectedness).
    
    By Lemmas \ref{framedense}, \ref{framedenseerg}   $\Mod(\Sigma_0) \subseteq \overline{\Gamma} , \overline{\Gamma}^K$. Since $T_{\alpha} \in \Mod(\Sigma_0)$, we have $T_{\alpha} \in \overline{\Gamma}, \overline{\Gamma}^K.$

\end{proof}

\begin{lemma}\label{containsker}

    The groups $\overline{\Gamma}, \overline{\Gamma}^K$ contain the kernel of $PSBr(\tilde C) \to \pi_1(C)^n.$
\end{lemma}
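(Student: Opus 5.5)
The plan is to identify the kernel of $PSBr(\tilde C) \to \pi_1(C)^n$ as the normal closure, inside $PSBr(\tilde C)$, of a single ``local'' element already known to lie in $\overline{\Gamma}$ and $\overline{\Gamma}^K$. Then Lemma \ref{normal} closes the argument by normality.

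\textbf{Step 1: describe the kernel.} A pure braid (lifted to $Br(\tilde C)$) maps to the tuple of loops traced by the $n$ strands. By the Birman exact sequence and the standard description of pure surface braid groups, the kernel of $PBr_n(C) \to \pi_1(C)^n$ is normally generated by the ``local'' braids, i.e. the full twists $A_{ij}$ of two punctures about each other inside a disk. Together with the boundary twists in the kernel of $Br(\tilde C)\to Br_n(C)$, this normally generates the kernel $\mathcal{K}$ of $PSBr(\tilde C)\to \pi_1(C)^n$. Note that $\mathcal K \subseteq PSBr(\tilde C)$ automatically, since trivial loops have trivial homology.

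\textbf{Step 2: realize the generators in $\overline\Gamma$.} The full twist $A_{ij}$ of $\Delta_i$ and $\Delta_j$ along an arc $a$ in $\tilde C$ is, in $\Mod(\tilde C)$, the product $T_{\alpha}T_{\Delta_i}^{-1}T_{\Delta_j}^{-1}$. Here $\alpha$ is the arc connect sum $\Delta_i \cup_a \Delta_j$, since $\alpha,\Delta_i,\Delta_j$ bound a pair of pants (lantern/chain relation).

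By the preceding Lemma, $T_{\alpha} \in \overline{\Gamma},\overline{\Gamma}^K$. The twists $T_{\Delta_i}$ lie in $\Gamma$, since each $\Delta_i$ is a nodal vanishing cycle of the degeneration to $C\cup D$. Hence $A_{ij}$ and the boundary twists lie in $\overline{\Gamma}\cap \Mod(\tilde C)$, and likewise for $\overline\Gamma^K$.

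\textbf{Step 3: conjugate.} By Lemma \ref{normal}, $\overline{\Gamma}\cap\Mod(\tilde C)$ is normalized by $SBr(\tilde C)$. Since $SBr(\tilde C)$ acts transitively on pairs of boundary components together with arcs (change of coordinates, mod homology corrections), the conjugates of the single $A_{ij}$ give all local full twists. Normal closure within $PSBr(\tilde C)\subseteq SBr(\tilde C)$ then gives all of $\mathcal K$.

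\textbf{Main obstacle.} The main obstacle is Step 1. We must check that the kernel is normally generated by local twists when the normal closure is taken only in $PSBr(\tilde C)$ (or $SBr$), rather than in all of $Br(\tilde C)$.

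My first attempt would be induction on $n$ via the fibration $PBr_n(C)\to PBr_{n-1}(C)$ with fiber $\pi_1(C\setminus\{n-1 \text{ pts}\})$. In the fiber, the kernel of $\pi_1(C\setminus pts)\to\pi_1(C)$ is normally generated by the puncture loops, i.e. the $A_{ij}$, under $\pi_1(C\setminus pts)$. Conjugation by fiber elements with nonzero homology must then be replaced by conjugation using elements of $PSBr$ whose homology is corrected by other strands, which is possible when $n\ge 2$.
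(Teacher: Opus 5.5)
Your proposal is correct and follows essentially the paper's route. The paper asserts, citing Lemma 7.2 of \cite{BS}, that the $SBr(\tilde C)$-orbit of $T_\alpha$ generates the kernel, and then conjugates using the surjection of $\overline{\Gamma}\cap M$ onto $SBr(\tilde C)$; this is exactly what your Steps 2--3 do via Lemma \ref{normal}, and your additional use of the nodal twists $T_{\Delta_i}\in\Gamma$ only makes the boundary-twist part of the kernel explicit.

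The obstacle you flag does not need an induction on $n$. Use the centralizer trick from the proof of Lemma \ref{normgen}: for $n\ge 3$, pushing a third component $\Delta_k$ around loops that avoid the pants bounded by $\alpha,\Delta_i,\Delta_j$ commutes with $T_\alpha$ and realizes every class in $H_1(C,\ZZ)$. Hence every conjugate of $T_\alpha$ by the preimage of $PBr_n(C)$ in $Br(\tilde C)$ is already a $PSBr(\tilde C)$-conjugate.
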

\begin{proof}
    We note that the orbit of $T_{\alpha}$ under $SBr(\tilde C)$ generates the kernel of $PSBr(\tilde C) \to \pi_1(C)^n$ (see for instance, the proof of Lemma 7.2 of \cite{BS}).  Since $\bar\Gamma\cap M$ surjects onto $SBr(\tilde C)$, $\bar \Gamma$ in fact contains this $SBr(\tilde C)$ orbit of $T_{\alpha}$. The same applies to $\overline{\Gamma}^K.$
\end{proof}

\begin{lemma}\label{extarc}
Let $\Sigma_0$ be a subsurface of $E$ such that the genus of $\Sigma_0 \cap \tilde C$ is at least two. Assume that for some choice of framing $\phi$ on $\Sigma_0$,  $\Gamma$ contains $\Mod(\Sigma_0)[\phi].$ We furthermore assume $\Sigma_0$ contains three boundary components $\Delta_1, \Delta_2$ and $\Delta_3$ and contains a vanishing cycle $\alpha$ joining the first two components.

Let $a$ be an arc whose interior lies in $\tilde C \setminus \Sigma_0$ and whose end points lie on the boundary of $\tilde C \setminus \Sigma_0$. Then $\overline \Gamma, \overline{\Gamma}^K$ contain $\Mod(\Sigma)$, where $\Sigma$ is a thickening of $\Sigma_0 \cup a$. 
\end{lemma}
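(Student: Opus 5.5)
We will show that the closures $\overline\Gamma$ and $\overline\Gamma^K$ contain $\Mod(\Sigma)$ by producing one extra Dehn twist $T_\gamma$ in $\Gamma$ and then applying the gluing lemmas. The closure is only bigger than $\Gamma$, so it contains $\Mod(\Sigma_0)[\phi]$. By Lemmas \ref{framedense} and \ref{framedenseerg} together with Lemmas \ref{changeofclosurealg} and \ref{changeofclosureerg}, $\overline\Gamma$ and $\overline\Gamma^K$ therefore already contain all of $\Mod(\Sigma_0)$. Here we use that $\Sigma_0$ has genus $\ge 4$ on the algebraic side (guaranteed since $\Sigma_0\cap\tilde C$ has genus $\ge 2$ and $\Sigma_0$ also meets $\tilde D$; otherwise we first enlarge the framed subsurface as in the previous lemma). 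It therefore suffices to find a $\gamma$ with $T_\gamma\in\overline\Gamma$ such that $\gamma\cap(\Sigma\setminus \Sigma_0)$ is exactly the strip around $a$ traversed once. Then Lemma \ref{indsamebound} (if both endpoints of $a$ lie on the same boundary component of $\Sigma_0$) or Lemma \ref{indtor} (if they lie on different components) shows that $\langle \Mod(\Sigma_0),T_\gamma\rangle$ acts with Zariski dense orbits, resp. ergodically, on the relative representation varieties of $\Sigma$ with general boundary monodromy. One more application of Lemma \ref{changeofclosurealg} (resp. \ref{changeofclosureerg}) then gives $\Mod(\Sigma)\subseteq \overline\Gamma,\overline\Gamma^K$.

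To produce $\gamma$, we use the braid machinery. Choose an arc $a'$ in $\tilde C$ that starts at $\Delta_1$, runs inside $\Sigma_0$ to one endpoint of $a$, follows $a$, and then returns inside $\Sigma_0$ to $\Delta_2$. By the lemma on realizing arcs, there is a tacnodal vanishing cycle $\gamma_0$ with $\gamma_0\cap\tilde C=a'$, so $T_{\gamma_0}\in\Gamma$. However, $\gamma_0\cap\tilde D$ is some arc we do not control, and it may leave $\Sigma_0$. To correct this, we compare $\gamma_0$ with the given vanishing cycle $\alpha\subseteq \Sigma_0$ joining $\Delta_1$ and $\Delta_2$. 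Using that $\Gamma\cap M$ surjects onto $SBr(\tilde C)$ (Lemma \ref{imageSBr}), that $\overline\Gamma$ contains the kernel of $PSBr(\tilde C)\to\pi_1(C)^n$ (Lemma \ref{containsker}), and that $\Mod(\Sigma_0)\subseteq\overline\Gamma$, we aim to find an element of $\overline\Gamma$ that fixes $\tilde C$-data up to $\Mod(\Sigma_0)$ and carries $\gamma_0$ to a curve whose $\tilde D$-part coincides with that of $\alpha$. Concretely, we would take $\gamma=h\gamma_0$ with $h\in\overline\Gamma\cap\Mod(\tilde D)$-type elements. Alternatively, we replace the search for a single curve by a lantern/commutator argument: $T_{\gamma_0}T_\alpha^{-1}$ conjugated by elements of $\Mod(\Sigma_0)$ yields twists about curves meeting the complement of $\Sigma_0$ only in the strip around $a$.

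The main obstacle is this control of the $\tilde D$-side arc. Because $\pi_1X\neq 0$, not every tacnodal-type curve is a vanishing cycle, so we cannot simply declare the desired $\gamma$ to be one. What we would try first is the following. The two tacnodal vanishing cycles $\gamma_0$ and $\alpha$ share their endpoints on $\Delta_1,\Delta_2$, so they differ on the $\tilde D$-side by a loop. Applying an element of $SBr(\tilde C)$ realized in $\Gamma\cap M$ that drags $\Delta_1$ around a suitable loop in $\Sigma_0\cap\tilde C$ (genus $\ge 2$ gives room), we arrange that $\gamma_0$ and $\alpha$ are homologous and in fact isotopic on the $\tilde D$ side modulo $\Mod(\Sigma_0)$. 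If that fails, we instead note that $\overline\Gamma$ is closed under the $\Mod(\Sigma_0)$-action and try to prove directly, as in the proof of Lemma \ref{indsamebound}, that invariance under $T_{\gamma_0}$ forces the descended function $\bar f$ on the pair-of-pants/torus variety to be conjugation invariant. For this, the only input needed is that $\rho(\tilde\gamma_0)$ ranges over an open subset of $G$ (resp. $K$) as $\rho$ varies in the fibre, which holds regardless of where $\gamma_0$ goes in $\tilde D$ since $\gamma_0$ crosses the strip exactly once.
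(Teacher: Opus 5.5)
Your overall framework is the right one: the closure already contains $\Mod(\Sigma_0)$, and one more suitable twist plus a gluing lemma and Lemma \ref{changeofclosurealg} would finish. The gap is the step you flag as the main obstacle. None of your three routes produces $T_\gamma\in\overline{\Gamma}$ for a curve $\gamma\subseteq\Sigma$ crossing the strip exactly once, and the tools you list cannot.

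Let $V\subseteq H_1(E;\mathbb{Q})$ be the span of the vanishing cycles. Since $\Gamma$ is generated by twists about vanishing cycles, it preserves $V$ and fixes $V^\perp$ pointwise. The twists about $\phi$-admissible curves in $\Sigma_0$ lie in $\Gamma$, and their classes span the image of $H_1(\Sigma_0;\mathbb{Q})$, so that image lies in $(V^\perp)^\perp=V$. Hence all of $\Mod(\Sigma_0)$ preserves $V$ and fixes $V^\perp$. So does the kernel of $PSBr(\tilde C)\to\pi_1(C)^n$, which is generated by $\Gamma$-conjugates of $T_\alpha$. Therefore every curve you can reach from $\gamma_0$ or $\alpha$ using Lemma \ref{imageSBr}, Lemma \ref{containsker} and $\Mod(\Sigma_0)$ has class in $V$.

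A once-crossing $\gamma\subseteq\Sigma$, however, has $[\gamma]=\pm[\ell]+v$ with $v\in V$, where $\ell$ is the loop formed by $a$ and a path in $\Sigma_0$. Whenever $\ell$ is non-torsion in $H_1(X)$, this gives $[\gamma]\notin V$. That is exactly the situation needed in Lemma \ref{contnormalpha} when $b_1(X)>0$. This is why $\gamma_0\cap\tilde D$ is forced to leave $\Sigma_0$, and why dragging $\Delta_1$ by elements of $\Gamma\cap M$ cannot repair it.

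Your other two routes fail as well:
\begin{itemize}
\item The lantern variant fails more directly. Conjugating $T_{\gamma_0}T_\alpha^{-1}$ by $h\in\Mod(\Sigma_0)$ gives $T_{h\gamma_0}T_{h\alpha}^{-1}$, and $h$ never moves the part of $\gamma_0$ outside $\Sigma_0$.
\item The fallback fails because $T_{\gamma_0}$ is not supported in $\Sigma$, so it does not act on the relative varieties of $\Sigma$ where the descent to $c_2$ happens. On all of $E$ it modifies the coordinate across $a$ and the coordinate across the $\tilde D$-excursion simultaneously, so the one-strip argument does not go through as you state it.
\end{itemize}
A sanity check confirms something is missing: your argument never uses semisimplicity. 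Yet for $G=\CC^*$ the functions $\rho\mapsto\rho(x)$ with $x\in V^\perp\cap H_1(E;\ZZ)$ are $\Gamma$-invariant, so the closure fixes $V^\perp$ and cannot contain $T_\gamma$. This is the obstruction of Proposition \ref{notreductive}, and it must be defeated at exactly this step.

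The missing idea is to stop looking for a once-crossing curve and instead produce a twist about a curve $\beta$ in $\pm1$ position with respect to $(\Sigma_0,S)$. Since $\beta$ crosses the strip once in each direction, its class has no $[\ell]$-component, so there is no obstruction.

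The paper takes $\beta=g\cdot\alpha$ with $g=[\eta,\sigma]$. Here $\eta$ point-pushes $\Delta_1$ along a loop $\bar a$ that runs once through $a$ and closes up in $\Sigma_0$. The element $\sigma$ is the full arc twist of $\Delta_1$ about $\Delta_3$ along an arc disjoint from $\alpha$. Because $\sigma$ maps trivially to $\pi_1(C)^n$, so does $g$. Hence $g\in\overline{\Gamma},\overline{\Gamma}^K$ by Lemma \ref{containsker}, and so is $T_\beta=gT_\alpha g^{-1}$. Since $g$ is supported in $\tilde C$, we get $\beta\cap\tilde D=\alpha\cap\tilde D\subseteq\Sigma_0$.

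The gluing then uses the $\pm1$ mechanism of Lemmas \ref{pmtwist} and \ref{indpm}. $T_\beta$ alters the arc across the strip by a commutator $[z_1,z_2]$ whose values $\rho(z_1),\rho(z_2)$ range over an open set. Then $[G,G]=G$ (resp. $[K,K]=K$) gives the conjugation invariance of the descended function. That commutator trick is where semisimplicity enters, and it is the ingredient your proposal lacks.
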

\begin{proof}
    We note that $\Sigma= \Sigma_0 \cup S$ where $S$ is a 1-handle that is parallel to $a$.
    It suffices to establish that $\bar \Gamma$ contains a Dehn twist about a loop $\beta$ in $\pm 1$ position with respect to $(\Sigma_0, S)$.  Let us produce such a $\beta$. The curve $\beta$ that we will will be of the form $g \cdot \alpha$ for some $g \in PSBr(\tilde C) \cap \overline{\Gamma}$  it is immediate that any $\beta$ of this form is such that $T_{\beta} \in \overline{\Gamma}.$
    The tacnodal vanishing cycle $\alpha$ goes through two boundary components $\Delta_1$ and $\Delta_2.$ 
    Let $l$ denote a path joining $\Delta_1$ to an endpoint of $a$.
    Let $\bar a$ be a loop in $\tilde C$ starting at $\Delta_1$, obtained by:
    \begin{enumerate}
        \item Following $l$ till we  reach $a$
        \item Following $a$ till we reach the other endpoint
        \item Returning to the first endpoint of $a$  via some path in $\Sigma_0.$
        \item Going back via $l$.
    \end{enumerate}
    Let $\gamma$ be the braid obtained by point pushing our first boundary  $\Delta_1$ along  $\bar a.$
    Let $\sigma$ be a full arc twist about an arc joining $\Delta_1$ and $\Delta_3$ disjoint from $\alpha$.
 
    We claim that $g = [\gamma, \sigma]$ satisfies our requirements.
    To see that $\beta = g \cdot \alpha$ is in $\pm 1$ position follows from drawing out the loop $\beta$. It also follows from the following two facts (which are easy to see from the defintion), the loop $\beta$ is not isotopic to a loop in $\Sigma_0$, and it enters the  attaching arc once in the positive direction and once in the negative direction. It is also to easy to see that the complementary region is connected. Now to establish that $g \in \overline {\Gamma}$  (resp. $\overline{\Gamma}^K$)it suffices by Lemma \ref{containsker} to establish that the image of $g$ in $\pi_1(C)^n$  is in the image of $\overline{\Gamma} \cap PSBr(\tilde C)$ (resp. $\overline{\Gamma}^K$)in $\pi_1(C)^n.$ But this follows from the fact that $\sigma$ and hence $[\gamma,\sigma]$ is mapped to the identity by this map.
\end{proof}

\begin{lemma}\label{contnormalpha}
 The groups $\overline{\Gamma}$ and $\overline{\Gamma}^K$ contain Dehn twists $T_{\alpha_1} ,\dots T_{\alpha_N}$ such that $\alpha_1 ,\dots , \alpha_N \subseteq \tilde C$ normally generate $\pi_1(C)$. 
\end{lemma}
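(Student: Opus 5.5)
The plan is to assemble the Dehn twists from the framed mapping class group of the core subsurface, enlarged step by step by Lemma \ref{extarc} until it contains a full set of normal generators of $\pi_1(C)$.

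First, take the subsurface $\Sigma_0$ from the preceding lemma. It has genus $\ge 4$, it contains three boundary components $\Delta_1,\Delta_2,\Delta_3$ of $\tilde C$ and the tacnodal vanishing cycle $\alpha$, and $\Gamma$ contains a framed mapping class group $\Mod(\Sigma_0)[\phi]$. By Lemmas \ref{framedense} and \ref{framedenseerg}, together with the observations following Lemma \ref{changeofclosurealg}, the closures $\overline{\Gamma}$ and $\overline{\Gamma}^K$ already contain all of $\Mod(\Sigma_0)$. Using the construction in \cite{BS} (Lemma 6.1), we arrange that $\Sigma_0 \cap \tilde C$ has genus at least two, so the hypotheses of Lemma \ref{extarc} are met.

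Second, enlarge $\Sigma_0$ inside $\tilde C$. Pick arcs $a_1,\dots,a_m$ in $\tilde C \setminus \Sigma_0$ with endpoints on $\partial \Sigma_0$ such that a thickening of $\Sigma_0 \cup a_1 \cup \dots \cup a_m$ contains a subsurface $\Sigma'$ of $\tilde C$. We want $\Sigma'$ to carry all of $\pi_1$ of $\tilde C$ up to the remaining boundary circles of $\tilde C$. Such arcs exist because $\tilde C \setminus \Sigma_0$ can be cut into disks by finitely many arcs; equivalently, $\tilde C$ deformation retracts onto $(\Sigma_0\cap \tilde C)$ union a graph.

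Apply Lemma \ref{extarc} with $a_1$, giving $\Mod(\Sigma_0 \cup S_1) \subseteq \overline{\Gamma},\overline{\Gamma}^K$. For the iteration we need the hypothesis in the form ``the closure contains $\Mod$ of the subsurface''. The proof of Lemma \ref{extarc} only uses that $T_\alpha$ and $PSBr(\tilde C)\cap\overline\Gamma$ lie in the closure, together with Lemma \ref{indpm}, \ref{indsamebound} and Lemma \ref{changeofclosurealg}. Since $\Mod(\Sigma_0\cup S_1)$ contains a framed subgroup, the same argument runs with $\Sigma_0$ replaced by $\Sigma_0\cup S_1$. Inducting on the arcs yields $\Mod(\Sigma_m)\subseteq \overline{\Gamma},\overline{\Gamma}^K$, where $\Sigma_m \supseteq \Sigma'$.

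Finally, choose simple closed curves $\alpha_1,\dots,\alpha_N\subseteq \Sigma'\subseteq\tilde C$ representing a standard generating set $a_1,b_1,\dots,a_g,b_g$ of $\pi_1(C)$ after capping the boundary circles of $\tilde C$ by disks. These curves generate, and in particular normally generate, $\pi_1(C)$. Their Dehn twists lie in $\Mod(\Sigma_m)$, hence in both closures, which proves the lemma.

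The main obstacle is the iteration of Lemma \ref{extarc}. As stated, that lemma requires $\Gamma$ itself, not its closure, to contain a framed mapping class group, and each new handle must be attached so that a $\pm1$ curve can be produced. The repair I would try first is to reprove Lemma \ref{extarc} with the hypothesis ``$\Mod(\Sigma_0)\subseteq\overline\Gamma$''. Its proof only uses membership in $\overline\Gamma$, because $\overline\Gamma$ is a group and the braid $[\gamma,\sigma]$ lies in $\overline\Gamma$ by Lemma \ref{containsker}. Failing that, a weaker fallback suffices: if the closure contains a single filling-enough subsurface of $\tilde C$, the $SBr(\tilde C)$-normality of Lemma \ref{normal} lets us translate Dehn twists around, and normal generation of $\pi_1(C)$ only needs the $SBr$-orbit to cover the standard generators.
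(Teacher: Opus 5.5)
Your argument is essentially correct, but it is organized differently from the paper's.

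\textbf{The paper's route.} The paper never iterates Lemma \ref{extarc}. It chooses arcs $a_1,\dots,a_M$, each attached separately to the same core surface $\Sigma_0$. That surface has one boundary component and satisfies the hypothesis $\Gamma\supseteq\Mod(\Sigma_0)[\phi]$ literally. The arcs are chosen so that the images of the groups $\pi_1((\Sigma_0\cup a_i)\cap\tilde C)$ in $\pi_1(C)$ together normally generate $\pi_1(C)$. The paper then applies Lemma \ref{extarc} once per arc and takes the curves from the various $(\Sigma_0\cup a_i)\cap\tilde C$. This is why the statement only asserts normal generation: the curves live in different subsurfaces, and normal generation is all that Lemmas \ref{normgen} and \ref{contpsbr} use.

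\textbf{Your route.} Your iteration yields more: a single subsurface $\Sigma_m$ with $\Mod(\Sigma_m)\subseteq\overline\Gamma$ whose $\pi_1$ surjects onto $\pi_1(C)$. The cost is a strengthened Lemma \ref{extarc}. Your repair is valid, but the right justification is not that $\Mod(\Sigma_0\cup S_1)$ contains a framed subgroup, since that subgroup is only known to lie in $\overline\Gamma$, not in $\Gamma$. The correct reason is that closure is monotone and idempotent, $\overline{\overline\Gamma}=\overline\Gamma$. So once $\Mod(\Sigma_j)$ and the $\pm1$ twist $T_\beta$ lie in $\overline\Gamma$, so does the closure of the group they generate. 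Meanwhile $T_\alpha$ and Lemma \ref{containsker} do not depend on $\Sigma_j$.

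The iteration also forces you to run the induction and change-of-closure steps over base surfaces with several boundary components. Lemma \ref{changeofclosurealg} is stated only for one boundary component. The paper's parallel scheme keeps the base $\Sigma_0$ fixed and sidesteps this.

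\textbf{Drop the fallback.} $SBr(\tilde C)$ lies in the kernel of $\Mod(\tilde C)\to\Mod(C)$, so it preserves free homotopy classes in $C$. Translating Dehn twists by it can therefore never enlarge the normal closure in $\pi_1(C)$.
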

\begin{proof}
    Let $\Sigma_0$ be as in the statement of Lemma \ref{extarc}. Let $a_1 ,\dots a_M$ be a collection of arcs in $\tilde C $  such that:
    \begin{enumerate}
        \item The end points of $a_i$ lie on the same boundary component of $\Sigma_0$.
        \item The interiors of the $a_i$ are disjoint from $\Sigma_0.$
        \item The union of the images of $\pi_1((\Sigma_0 \cup a_i)\cap \tilde C) \to \pi_1(C)$ normally generate $\pi_1(C),$ 
    \end{enumerate}
    This can always be arranged.  We then pick simple closed curves $\alpha_i^M \subseteq (\Sigma_0 \cup a_i)\cap \tilde C) $ that normally generate $\pi_1((\Sigma_0 \cup a_i)\cap \tilde C)$. By Lemmas \ref{extarc}, \ref{changeofclosurealg}, and \ref{changeofclosureerg} the $T_{\alpha_i^M}$ are contained in $\overline{\Gamma}$ and $\overline{\Gamma}^K.$

\end{proof}
\begin{lemma}\label{normgen}
    Let $\{k_i\}_{i\in I}$ be a set of normal generators for $\pi_1(C)$.
    Let $N_0 \subseteq \pi_1(C)^n$ be the subgroup   generated by elements of the form $$(1 , \dots 1 , k_i, k_i^{-1}, 1 ,\dots 1)$$ (here $k_i$ is in the $j^{th}$ position for some $1 \le j < n$). Let $N$ be the normal closure of $N_0$ by elements of $\pi_1(C)^n_0.$

    Then $N = \pi_1(C)^n_0$.
\end{lemma}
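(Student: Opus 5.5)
The plan is to read $\pi_1(C)^n_0$ as the kernel of the map $\pi_1(C)^n \to H_1(C,\ZZ)$ sending $(g_1,\dots,g_n)$ to the sum of the homology classes of the $g_j$. This matches the simple braid group, since $PSBr_n(C)$ maps into exactly this subgroup. Write $\pi = \pi_1(C)$ and $H = H_1(C,\ZZ) = \pi/[\pi,\pi]$. Clearly $N_0 \subseteq \pi^n_0$, so $N \subseteq \pi^n_0$. For the reverse inclusion I will proceed in two steps: first show that $N \supseteq [\pi,\pi]^n$, and then show that the image of $N$ in $\pi^n_0/[\pi,\pi]^n$ is everything.

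\textbf{Step 1: commutators.} I conjugate only by elements known to lie in $\pi^n_0$. For any $h \in \pi$ the element $(h,1,h^{-1},1,\dots,1)$ lies in $\pi^n_0$; this uses $n \ge 3$, and in our setting $\tilde C$ has at least three boundary components (compare the previous lemma, where $\Sigma_0$ contains three boundary components). Conjugating $(k_i,k_i^{-1},1,\dots,1)$ by it gives $(hk_ih^{-1},k_i^{-1},1,\dots,1)$. Multiplying on the right by $(k_i,k_i^{-1},1,\dots)^{-1}$ gives $([h,k_i],1,\dots,1) \in N$.
\begin{itemize}
\item Next conjugate by elements $(g,g^{-1},1,\dots,1) \in \pi^n_0$. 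This shows that $N \cap (\pi\times 1\times\dots\times 1)$ contains every $\pi$-conjugate of every $[h,k_i]$.
\item The normal subgroup of $\pi$ generated by $\{[h,k_i]\}$ is $[\pi,R]$, where $R$ is the normal closure of the $k_i$. Since $R=\pi$, this is $[\pi,\pi]$, so $[\pi,\pi]\times 1 \times\dots\times 1 \subseteq N$.
\item Running the same argument with the generators placed in other positions $j$ (and auxiliary conjugators in a third slot) gives $[\pi,\pi]$ in every coordinate. Hence $[\pi,\pi]^n \subseteq N$.
\end{itemize}

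\textbf{Step 2: abelian quotient.} It remains to check that the image of $N$ in $\pi^n_0/[\pi,\pi]^n = \{(x_1,\dots,x_n)\in H^n : \sum x_j = 0\}$ is everything. The images of the generators are the vectors $(0,\dots,\bar k_i,-\bar k_i,\dots,0)$. Because the $k_i$ normally generate $\pi$, their classes $\bar k_i$ generate $H$. Therefore the images contain $(0,\dots,x,-x,\dots,0)$ for every $x\in H$ and every adjacent pair of positions. These elements generate the sum-zero subgroup: peel off coordinates from left to right by telescoping. Combining this with Step 1 gives $N = \pi^n_0$.

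The main subtlety is Step 1. One must conjugate only by elements of $\pi^n_0$, not by arbitrary elements of $\pi^n$. That is why the auxiliary third coordinate is needed to absorb $h^{-1}$, and it is the only place where $n\ge 3$ enters. I would also double-check that the paper's $\pi_1(C)^n_0$ is indeed this homology-sum kernel; if it were the smaller group $\pi^n\cap\ker(\pi^n\to H^n)$, the statement would be false, since Step 2 shows $N$ surjects onto the sum-zero part of $H^n$.
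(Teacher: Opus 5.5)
Your proof is correct, and it takes a different route from the paper's.

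\textbf{The paper's route.} It first shows that $N$ is normal in all of $\pi_1(C)^n$, not just in $\pi_1(C)^n_0$. For a generator $g$ of $N_0$, the centraliser $C(g)$ contains a full copy of $\pi_1(C)$ in an unused coordinate, so it surjects onto $H_1(C,\ZZ)$. Hence every $\pi_1(C)^n$-conjugate of $g$ is already a $\pi_1(C)^n_0$-conjugate. With normality in hand, the paper shows that $\{g : (\dots,g,g^{-1},\dots)\in N\}$ is a normal subgroup of $\pi_1(C)$ containing the $k_i$, hence is everything. Then every class in $\pi_1(C)^n/N$ has a representative supported in the first coordinate and one supported in the last. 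This forces the quotient to be abelian and the map $\pi_1(C)^n/N\to H_1(C,\ZZ)$ to be an isomorphism. That final step also confirms your reading of $\pi_1(C)^n_0$ as the kernel of the homology-sum map.

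\textbf{Your route.} You bypass normality in $\pi_1(C)^n$ entirely. You build $([h,k_i],1,\dots,1)$ by an explicit conjugation inside $\pi_1(C)^n_0$, using an auxiliary slot. You then use that the normal closure of $\{[h,k_i]\}$ is $[\pi,R]=[\pi,\pi]$ to get $[\pi,\pi]^n\subseteq N$. You finish with linear algebra on the sum-zero part of $H^n$.

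\textbf{Comparison.} The paper's argument is shorter once normality is established. Yours is more explicit and pinpoints exactly where $n\ge 3$ enters.

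Your caveat about $n\ge 3$ is well placed. The paper's centraliser step silently needs it too: for $n=2$, the centraliser $C_{\pi}(k_i)^2$ of $(k_i,k_i^{-1})$ in a surface group of genus $\ge 2$ has only cyclic image in $H_1$. In fact, for $n=2$ the statement can genuinely fail, for instance with the standard generators; this can be detected in the two-step nilpotent quotient of $\pi_1(C)$. In the application $n=|C\cap D|$ is large, so this does not matter there.
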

\begin{proof}
We note that given a generator $g$ of $N_0$, the set $\pi_1(C)^n_0 \cdot g = \pi_1(C)^n \cdot g$, i.e. any conjugate of $g$ in the bigger group can be realised by conjugation with the smaller group. This immediately follows from the fact that the centraliser of $g$, $C(g)$ surjects onto $H_1(C, \ZZ)$ under the composite map $C(g) \to \pi_1(C)^n \to H_1(C,\ZZ).$
This establishes that $N$ is normal in $\pi_1(C)^n.$

We claim that for any $g \in \pi_1(C)$ the element $$(1 , \dots 1, g, g^{-1}, 1 , \dots 1)\in N$$ (here $g$ is the $j^{th}$ position). This follows from the following argument. The set of $g$ for which the claim holds is conjugation invariant and contains the generators $k_i$. It suffices to prove that this set is closed under multiplication.  Let $x_1, x_2 \in \pi_1(C)$ be such that $$(1 ,\dots 1 , x_i , x_i^{-1} , 1, \dots, 1) \in N.$$ Then we immediately get that $$(1 , \dots 1, x_1 x_2 , x_1^{-1} x_2^{-1}, 1 , \dots 1) \in N.$$ But since $N$ is normal, $$(1 , \dots 1, x_1 x_2 , (x_1x_2)^{-1}, 1 , \dots 1) \in N$$ as well.

Now the fact that we have all elements of the form  $$(1 , \dots 1, g, g^{-1}, 1 , \dots 1)$$ in $N$ allows us to conclude that every element of $\pi_1(C)^n / N$ has a representative of the form $(1 , \dots 1 , g).$
We note that this argument also implies that every element in $\pi_1(C)^n/N$ has a representative of the form  $(g, 1 ,\dots,1).$ This in turn tells us that the quotient group is abelian and  that $([g,h] ,1 \dots , 1) \in N.$ This in turn tells us that the map $\pi_1(C)^n /N \to H_1(C)$ is an isomorphism. This implies that $N = \pi_1(C)^n_0.$
\end{proof}

\begin{lemma}\label{contpsbr}
 The maps  $\overline{\Gamma} \cap PSBr(\tilde C) \to \pi_1(C)^n_0$  and $\overline{\Gamma}^K \cap PSBr(\tilde C) \to \pi_1(C)^n_0$ are surjections. As a result $\overline{\Gamma}, \overline{\Gamma}^K$ contain $PSBr(\tilde C).$
\end{lemma}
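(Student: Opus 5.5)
Write $\pi_1(C)^n_0$ for the kernel of the summation map $\pi_1(C)^n \to H_1(C,\ZZ)$; this is the image of $PSBr(\tilde C)$ under the point-pushing map $PSBr(\tilde C)\to PBr_n(C)\to \pi_1(C)^n$. The plan is to combine Lemma \ref{contnormalpha} with Lemma \ref{normgen} to get surjectivity, and then use Lemma \ref{containsker} to recover all of $PSBr(\tilde C)$.

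\textbf{Step 1: building the generators of $N_0$.} By Lemma \ref{contnormalpha}, $\overline{\Gamma}$ and $\overline{\Gamma}^K$ contain Dehn twists $T_{\alpha_1},\dots,T_{\alpha_N}$ with $\alpha_i\subseteq \tilde C$ normally generating $\pi_1(C)$. For each $i$ and each pair of consecutive boundary components $\Delta_j,\Delta_{j+1}$, I would choose a point-pushing braid $\gamma_{i,j}\in PBr(\tilde C)$ that pushes $\Delta_j$ once around a loop freely homotopic to $\alpha_i$. Its image in $\pi_1(C)^n$ is $(1,\dots,k_i,\dots,1)$ in the $j$-th slot, with $k_i$ the class of $\alpha_i$. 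This braid is not itself simple, so I would pair it with a push of $\Delta_{j+1}$ around $\alpha_i^{-1}$.

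A cleaner way to realise this is via lantern or commutator type relations. Take the curve $\alpha_i'$ obtained as an arc connect sum of $\alpha_i$ with a loop around $\Delta_j$, lying in $\tilde C$. Then $T_{\alpha_i'}T_{\alpha_i}^{-1}$, possibly times boundary twists, is the push of $\Delta_j$ around $\alpha_i$. Doing the same with $\Delta_{j+1}$ and $\alpha_i$ reversed gives an element of $PSBr(\tilde C)$ mapping to $(1,\dots,k_i,k_i^{-1},\dots,1)$.

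To see that these elements lie in $\overline\Gamma$, I would conjugate $T_{\alpha_i}$ by elements of $\Gamma\cap M$ lifting $SBr(\tilde C)$, using Lemma \ref{imageSBr} and the normality argument of Lemma \ref{normal}. Such conjugates move $\alpha_i$ to $\alpha_i'$ or its analogue, and products of the resulting twists give the desired braids. The same construction works for $\overline\Gamma^K$.

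\textbf{Step 2: surjectivity.} The image $I$ of $\overline\Gamma\cap PSBr(\tilde C)$ in $\pi_1(C)^n_0$ is normalised by the image of $SBr(\tilde C)$, since $\Gamma\cap M$ surjects onto $SBr(\tilde C)$. The image of $SBr(\tilde C)$ contains $\pi_1(C)^n_0$ acting by conjugation. So $I$ is closed under conjugation by $\pi_1(C)^n_0$ and contains $N_0$, hence contains $N$. Lemma \ref{normgen} then gives $I=\pi_1(C)^n_0$.

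\textbf{Step 3: all of $PSBr(\tilde C)$.} Given $x\in PSBr(\tilde C)$, choose $y\in\overline\Gamma\cap PSBr(\tilde C)$ with the same image in $\pi_1(C)^n$. Then $xy^{-1}$ lies in the kernel of $PSBr(\tilde C)\to\pi_1(C)^n$, which is contained in $\overline\Gamma$ by Lemma \ref{containsker}. Hence $x\in\overline\Gamma$, and likewise for $\overline\Gamma^K$.

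The main obstacle is Step 1: producing, from the twists $T_{\alpha_i}$, genuine elements of the closure whose braid image is exactly $(\dots,k_i,k_i^{-1},\dots)$. The boundary twist ambiguity is harmless because of Lemma \ref{containsker}. The real issue is checking that $\alpha_i$ and $\alpha_i'$ are related by an element of $SBr(\tilde C)$ realised inside $\Gamma$. If that fails, I would instead take $\alpha_i'$ inside the enlarged subsurface from Lemma \ref{extarc}, whose full mapping class group already lies in $\overline\Gamma$.
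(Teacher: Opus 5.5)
\textbf{There is a genuine gap, in Step 1.} Your Steps 2 and 3 are fine and match the paper; the paper leaves the passage from surjectivity to $PSBr(\tilde C)\subseteq\overline\Gamma$ via Lemma \ref{containsker} implicit, and you make it explicit. The trouble is exactly the point you flag as the main obstacle: it does fail. No element of $SBr(\tilde C)$, and no lift of one in $\Gamma\cap M$, carries $\alpha_i$ to $\alpha_i'$, the band sum of $\alpha_i$ with the single component $\Delta_j$.

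To see this, let $h$ denote the total homology class of a braid. If $g\in SBr(\tilde C)$, then $T_{g\alpha_i}T_{\alpha_i}^{-1}=[g,T_{\alpha_i}]$ is pure and trivial in $\Mod(C)$. Its total homology class is $h(g)-(T_{\alpha_i})_*h(g)=0$, so it lies in $PSBr(\tilde C)$. But $T_{\alpha_i'}T_{\alpha_i}^{-1}$ is, up to boundary twists (which have $h=0$), the push of $\Delta_j$ alone around $\alpha_i$. Its total homology class is $\pm[\alpha_i]$. This is nonzero whenever $[\alpha_i]\neq 0$ in $H_1(C,\ZZ)$, and some $\alpha_i$ must have this property since they normally generate $\pi_1(C)$. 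So the single-component pushes cannot be assembled one at a time from $SBr$-conjugates of $T_{\alpha_i}$. Your phrase ``or its analogue'' would have to carry the whole argument, and you never say what it is.

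\textbf{The missing idea is to move both components at once.} Take an arc $a$ from $\Delta_j$ to $\Delta_{j+1}$ crossing $\alpha_i$ once, and let $\sigma\in SBr(\tilde C)$ be the half twist about $a$. By Lemma \ref{normal},
\[
[\sigma,T_{\alpha_i}]=T_{\sigma(\alpha_i)}T_{\alpha_i}^{-1}\in\overline\Gamma\cap PSBr(\tilde C).
\]
This element is precisely the simultaneous push of $\Delta_j$ around $\alpha_i$ and $\Delta_{j+1}$ around $\alpha_i^{-1}$. It maps to a conjugate of $(1,\dots,k_i,k_i^{-1},\dots,1)$, and the same works for $\overline\Gamma^K$. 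This is the paper's argument.

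Your fallback is to take $\alpha_i'$ inside the enlarged surface $\Sigma_0\cup a_i$ of Lemma \ref{extarc}, so that $T_{\alpha_i'}\in\overline\Gamma$ outright. That can be made to work, but only for the three components $\Delta_1,\Delta_2,\Delta_3$ that lie in $\Sigma_0$. Lemma \ref{normgen} needs generators at every adjacent pair of positions. You would still have to transport your generators there by conjugating with half twists in $SBr(\tilde C)$, again using Lemma \ref{normal}, and the proposal does not address this.
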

\begin{proof}
     We use Lemma \ref{contnormalpha} to get a collection of Dehn twists $T_{\alpha_i} \in \overline{\Gamma} $ such that the $\alpha_i$ normally generate $\pi_1(C)$.  We will abuse notation and let $\alpha_i$ denote an element in $\pi_1(C)$ belonging to this conjugacy class.
    
    Consider an arc $a$ joining the two boundary component $\Delta_j$ and $\Delta_{j+1}.$ Let $\sigma$ denote the arc half twist about $a$. By Lemma \ref{normal}, $[\sigma, T_{\alpha_i}] \in \overline {\Gamma} \cap PSBr(\tilde C).$ However this braid is sent to an element conjugate to $(1, \dots,\alpha_i, \alpha_i^{-1} )$  in $\pi_1(C)^n.$ Thus, by Lemma \ref{normgen}, the image of the set of all such $[\sigma, T_{\alpha_i}]$ normally generate $\pi_1(C)^n_0$. Thus the image of $\overline{\Gamma}$ normally generates $\pi_(C)^n_0$. But the image is normal by Lemma \ref{normal}. Thus the map is a surjection. 

    The argument works equally well for $\overline{\Gamma}^K.$
\end{proof}
\begin{lemma}\label{fullclosure}
    $\overline{\Gamma} = \overline{\Gamma}^K =\Mod(E).$
\end{lemma}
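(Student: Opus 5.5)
The strategy is to show that $\overline{\Gamma}$ and $\overline{\Gamma}^K$ contain the full mapping class group of a subsurface of $E$ carrying all of its topology up to arcs, and then to finish with the gluing lemmas of Section 3 and Lemma \ref{wholemcg}. The argument is the same for both closures, so we write it for $\overline{\Gamma}$.

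The first step is to enlarge the core subsurface until it fills $\tilde C$. We start from the framed subsurface $\Sigma_0$ of the core-subsurface lemma. Lemma \ref{framedense} (resp. \ref{framedenseerg}) together with Lemma \ref{changeofclosurealg} (resp. \ref{changeofclosureerg}) gives $\Mod(\Sigma_0)\subseteq\overline{\Gamma}$. We then attach arcs of $\tilde C\setminus\Sigma_0$ one at a time, using Lemma \ref{extarc}. Each application produces a Dehn twist about a curve in $\pm 1$ position. Lemmas \ref{indpm} and \ref{indsamebound} (and their ergodic analogues) then show that $\overline{\Gamma}$ acts with dense orbits, or ergodically, on the relative representation varieties of the enlarged surface. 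Lemma \ref{changeofclosurealg} then upgrades this to containment of that surface's full mapping class group. Iterating, $\overline{\Gamma}$ contains $\Mod(\Sigma_1)$ for a surface $\Sigma_1$ that contains all of $\tilde C$, every boundary circle $\Delta_i$, and the part of $\tilde D$ already lying in $\Sigma_0$. Lemma \ref{contpsbr} should make this easier: since $PSBr(\tilde C)\subseteq\overline{\Gamma}$, we may point-push boundary components freely. This lets us produce, for every needed arc, the required $\pm1$ curve as a $PSBr(\tilde C)$-translate of a tacnodal vanishing cycle or of $\alpha$.

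The second step extends across $\tilde D$. The roles of $C$ and $D$ are symmetric in the construction of Section 5: Lemma \ref{imageSBr} and the lemmas after it hold with $\tilde D$ in place of $\tilde C$. Alternatively, once $\Mod(\Sigma_1)\subseteq\overline{\Gamma}$, the group generated by $\Mod(\Sigma_1)$ and the tacnodal vanishing cycles already contains curves that cross $\tilde D$ in a single arc between distinct boundary components. These are exactly the curves needed to apply Lemma \ref{indsamebound}, gluing strips of $\tilde D$ onto $\Sigma_1$, and Lemma \ref{indtor}, adding the handles of $\tilde D$ using pairs of such curves. Repeating the enlargement argument of the first step therefore gives $\Mod(E\setminus \text{disk})\subseteq\overline{\Gamma}$.

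The third step caps off. A pointed-surface version of Lemma \ref{changeofclosurealg}, with trivial boundary monodromy, shows that $\overline{\Gamma}$ contains the image of $\Mod(\Sigma)$ for $\Sigma=E\setminus$ disk, and this image is all of $\Mod(E)$. The base surjectivity for the capped surface is exactly Lemma \ref{wholemcg}: $\Mod(E)$ acts with dense orbits on $\XX(E)$ and ergodically on $\XX_K(E)$, so $\Mod(E)$-closure considerations are consistent with the conclusion $\overline{\Gamma}=\overline{\Gamma}^K=\Mod(E)$.

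I expect the main obstacle to be the second step. There the fundamental group of $X$ obstructs realizing arbitrary tacnodal-type curves as vanishing cycles, so we must check that the $\pm1$ curves crossing $\tilde D$ can be obtained as translates under $PSBr(\tilde C)$ and $PSBr(\tilde D)$, both now in $\overline{\Gamma}$. That these curves are translates should follow because the obstruction, their image in $\pi_1(X)$, is killed by the commutator construction used in Lemma \ref{extarc}. I would first try the symmetric argument: rerun Lemmas \ref{imageSBr}–\ref{contpsbr} with $C$ and $D$ swapped.
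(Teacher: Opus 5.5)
Your top-level outline (grow from the framed core, reach $E$ minus a disk, cap off) matches the paper. However, the step that actually carries the lemma is your second step, and you leave it open. Your ``alternative'' only says that some curves with twists in $\overline{\Gamma}$ cross $\tilde D$ in a single arc between distinct $\Delta_i$. That is true by the definition of a tacnodal vanishing cycle, but it says nothing about \emph{which} $\tilde D$-arcs occur. To add the handles of $D$ you need prescribed ones, namely arcs running over those handles. Nothing established so far controls $\tilde D$-arcs: the arc-realization lemma and Lemmas \ref{containsker}--\ref{contpsbr} only produce arcs and elements on the $\tilde C$ side, and $\Mod(\Sigma_1)$ does not touch $\tilde D\setminus\Sigma_1$. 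Your fallback, rerunning Lemmas \ref{imageSBr}--\ref{contpsbr} with $C$ and $D$ swapped, would need $D$-side versions of the core-subsurface lemma and of Lemma \ref{extarc} (a framed subsurface meeting $\tilde D$ in genus $\ge 2$). Neither the paper nor your proposal supplies these. Finally, the suggestion that the $\pi_1(X)$ obstruction is ``killed by the commutator construction'' of Lemma \ref{extarc} is not an argument.

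The missing idea is that \emph{every} curve $\alpha$ of tacnodal type has $T_\alpha\in\overline{\Gamma}$ (likewise $\overline{\Gamma}^K$), and this needs only a small piece of the $C\leftrightarrow D$ symmetry.
\begin{itemize}
\item Suppose $\alpha$ runs from $\Delta_i$ to $\Delta_j$. The arc-realization lemma with $C$ and $D$ exchanged, which uses only the $D$-version of Lemma \ref{imageSBr}, gives a genuine tacnodal vanishing cycle $\alpha_0$ with $\alpha_0\cap\tilde D=\alpha\cap\tilde D$.
\item By Lemma \ref{contpsbr}, $PSBr(\tilde C)\subseteq\overline{\Gamma}$. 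This group acts transitively on arcs of $\tilde C$ from $\Delta_i$ to $\Delta_j$: $PBr(\tilde C)$ does by change of coordinates, and the stabilizer of such an arc surjects onto $H_1(C,\ZZ)$.
\item So some $g\in PSBr(\tilde C)$ moves $\alpha_0\cap\tilde C$ to $\alpha\cap\tilde C$ while fixing $\tilde D$. Then $g\alpha_0=\alpha$ and $T_\alpha=gT_{\alpha_0}g^{-1}\in\overline{\Gamma}$.
\end{itemize}
This is how the $\pi_1(X)$ obstruction disappears: it constrains $\Gamma$, not $\overline{\Gamma}$, and $\overline{\Gamma}$ can change the $\tilde C$-arc freely.

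Once all tacnodal-type twists are available, you can choose the $\tilde D$-arcs at will, and the enlargement is pure group theory. Since $\overline{\Gamma}$ is a group, it suffices that $\langle\Mod(\Sigma_i),T_{\alpha_i}\rangle=\Mod(\Sigma_{i+1})$ whenever $\alpha_i$ crosses the new strip once and meets a curve of $\Sigma_i$ once. This also removes your reliance on Lemma \ref{changeofclosurealg}, whose hypotheses (one boundary component, complement of genus $\ge 2$) fail for your intermediate surfaces and at the last stage. Your first step and the pointed version of Lemma \ref{changeofclosurealg} in your third step are then unnecessary. Once the image of $\Mod(\Sigma_N)$ lies in $\overline{\Gamma}$ and $E$ is obtained by filling in the boundary of $\Sigma_N$, surjectivity of capping gives $\overline{\Gamma}=\Mod(E)$. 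Lemma \ref{wholemcg} plays no role in this lemma.
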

\begin{proof}
    Let $\alpha$ be a curve of tacnodal type.  We claim that $T_{\alpha} \in \overline{\Gamma} , \overline{\Gamma}^K.$ We first note that there is a tacnodal vanishing cycle $\alpha_0$ such that $\alpha_0 \cap \tilde D = \alpha \cap \tilde D.$ We note that $PSBr(\tilde C)$ acts transitively on arcs with ends on two distinct fixed boundary components. To see this, we observe that by the change of coordinates principle, the group $PBr(\tilde C)$ acts transitively on the set of arcs with ends on two distinct boundary components. We can then use the fact that the stabiliser of a given such arc surjects on to $H_1(C, \ZZ)$ to conclude that $PSBr(\tilde C)$ also acts transitively.

    Thus there is some $g \in PSBr(\tilde C)$ such that $g \cdot \alpha_0 = \alpha.$ But since $ g \in \overline{\Gamma}, \overline{\Gamma}^K$ by Lemma \ref{contpsbr}, $T_{\alpha} \in\overline{\Gamma}, \overline{\Gamma}^K.$ This establishes the claim.
    
  We then finish exactly as in the proof of Theorem A of \cite{BS}. More precisely, we begin by noting that we have a subsurface $\Sigma_0$ as in Lemma \ref{extarc} , such that $\Mod(\Sigma_0) 
     \subseteq \overline{\Gamma}$. We then construct a sequence of subsurfaces $\Sigma_0 \subseteq \Sigma_1 \dots \subseteq \Sigma_N \subseteq E$, where:
     \begin{enumerate}
         \item $\Sigma_{i+1}$ is obtained by attaching an arc $a_i$ to $\Sigma_{i}.$
         \item The arc $a_i$ is such that there is a simple closed curve $\alpha_i$ such that $\alpha_i \cap (E \setminus \Sigma_i) = a_i.$ Furthermore $\alpha_i$ intersects some curve $\beta \subseteq \Sigma_i$ once tranversely. We require that $T_{\alpha_i} \in \overline{\Gamma}.$
         \item The entire surface $E$ is obtained by filling in the boundary components of   $\Sigma_N.$
     \end{enumerate}

     The construction above can always be carried out because $\alpha$ can be \emph{any} curve in $E$ of tacnodal type. 

     We now note that $<\Mod(\Sigma_i), T_{\alpha}> = \Mod(\Sigma_{i+1}).$ Thus we obtain the equality $\overline{\Gamma} = \Mod(\Sigma).$

     The above argument works equally well for $\overline{\Gamma}^K.$
\end{proof}

We finally prove the main theorem.
\begin{proof}[Proof of Theorem \ref{main}]
By Lemma \ref{fullclosure}, $\overline{\Gamma} = \Mod(\Sigma)$ and by Lemma \ref{wholemcg} $\Mod(\Sigma)$ acts with Zariski dense orbits on the character variety. Thus the monodromy group $\Gamma$ acts with dense orbits on the character variety.
\end{proof}

\begin{proof}[Proof of Theorem \ref{mainerg}]
    By Lemma \ref{fullclosure}, $\overline{\Gamma}^K = \Mod(\Sigma)$ and by Lemma \ref{wholemcg} $\Mod(\Sigma)$ acts ergodically on the character variety. Thus the monodromy group $\Gamma$ acts ergodically on the character variety.
\end{proof}

\bibliographystyle{amsalpha}
\bibliography{references}

@article{KPS,
  title={Density of monodromy actions on non-abelian cohomology},
  author={Katzarkov, Ludmil and Pantev, Tony and Simpson, Carlos},
  journal={Advances in Mathematics},
  volume={179},
  number={2},
  pages={155--204},
  year={2003},
  publisher={Elsevier}
}

@article{PCS,
  title={Vanishing cycles, plane curve singularities and framed mapping class groups},
  author={Portilla Cuadrado, Pablo and Salter, Nick},
  journal={Geometry \& Topology},
  volume={25},
  number={6},
  pages={3179--3228},
  year={2021},
  publisher={Mathematical Sciences Publishers}
}

@article{Saadi,
  title={Non-ergodicity on the SU (2)-character varieties},
  author={Saadi, Fayssal},
  journal={Commentarii Mathematici Helvetici. Email address: fayssal. saadi. math@ gmail. com},
  year={2025}
}

@article{Bpi,
  title={A $\pi_1$ obstruction to having finite index monodromy and an unusual subgroup of infinite index in $\textrm{Mod}(\Sigma_g)$},
  author={Banerjee, Ishan},
  journal={arXiv preprint arXiv:2403.07280},
  year={2024}
}

@article{BS,
  title={Monodromy and vanishing cycles for sufficiently ample linear systems on simply connected surfaces},
  author={Banerjee, Ishan and Salter, Nick},
  journal={arXiv preprint arXiv:2512.04018},
  year={2025}
}

@article{BSCI,
  title={Monodromy and vanishing cycles for complete intersection curves},
  author={Banerjee, Ishan and Salter, Nick},
  journal={arXiv preprint arXiv:2408.06479},
  year={2024}
}

@article{CS,
  title={Framed mapping class groups and the monodromy of strata of abelian differentials},
  author={Calderon, Aaron and Salter, Nick},
  journal={Journal of the European Mathematical Society},
  volume={25},
  number={12},
  pages={4719--4790},
  year={2022}
}

@article{PX,
  title={Ergodicity of mapping class group actions on representation varieties, I. Closed surfaces},
  author={Pickrell, Doug and Xia, Eugene Z},
  journal={Commentarii Mathematici Helvetici},
  volume={77},
  number={2},
  pages={339--362},
  year={2002}
}
\end{document}